\documentclass{siamart1116}
\usepackage{amsmath,amsfonts,amssymb}
\usepackage{stmaryrd}
\usepackage{latexsym}
\usepackage{epsfig,overpic}
\usepackage{subfigure}
\usepackage{color}
\let\ep\varepsilon
\newcommand{\bA}{\mathbf A}
\newcommand{\bB}{\mathbf B}
\newcommand{\bC}{\mathbf C}
\newcommand{\bI}{\mathbf I}
\newcommand{\bP}{\mathbf P}

\hyphenation{mathe-ma-ti-cal}

\newcommand{\bl}[1]{\mathbf l^{#1}}
\newcommand{\blk}[2]{\mathbf l^{#1,#2}}

\newcommand{\bn}{\mathbf n}
\newcommand{\bp}{\mathbf p}
\newcommand{\bs}{\mathbf s}
\newcommand{\bw}{\mathbf w}
\newcommand{\bx}{\mathbf x}
\newcommand{\by}{\mathbf y}

\newcommand{\bbf}{\mathbf f}

\newcommand{\eoc}[1]{$\text{eoc}_{\texttt{#1}}$}
\newcommand{\eoca}[1]{$\text{eoc}_{\texttt{#1}}^\ast$}

\newcommand{\wn}{{w_N}}
\newcommand{\wne}{{w_N^e}}
\newcommand{\wt}{{\bw_T}}
\newcommand{\wte}{{\bw_T^e}}
\newcommand{\G}[2]{\Gamma^{#1}_{#2}}
\newcommand{\norm}[1]{\Vert #1 \Vert}

\newcommand{\innerprod}[2]{\left( #1, #2 \right)}
\newcommand{\stab}{\rho_n}
\newcommand{\stabold}{\rho_{n-1}}
\newcommand{\consist}{\mathcal{E}_C^n}
\newcommand{\interpol}{\mathcal{E}_I^n}
\newcommand{\err}{\mathbb{E}}

\newcommand{\T}{\mathcal T}
\newcommand{\Div}{\operatorname{\rm div}}

\newcommand{\rr}{\mathbb{R}}
\newcommand{\cc}{\mathbb{C}}
\newcommand{\Gs}{\mathcal{G}} 

\DeclareGraphicsExtensions{.pdf,.eps,.ps,.eps.gz,.ps.gz,.eps.Y}

\newif\ifarxiv
\arxivtrue

\usepackage{todonotes}

\setlength{\marginparwidth}{2cm}

\usepackage{booktabs}

\usepackage{siunitx}
\sisetup{scientific-notation=true,
  round-precision=1,
  round-mode=places,
  retain-zero-exponent=true}
\sisetup{exponent-product = {\!\cdot}, output-product = \cdot}

\sisetup{detect-weight=true, detect-family=true}

\newcommand{\numbf}[1]{{\underline{\num{#1}}}}
\newcommand{\numit}[1]{\underline{\underline{\num{#1}}}}
\newcommand{\numQ}[1]{\num[round-precision=2,round-mode=places, scientific-notation=false]{#1}}
\renewcommand{\O}{\mathcal{O}}

\newcounter{ass}


\newcommand{\asslabel}{}
\newcommand{\ifasslabel}[1]{}

\newtheorem{remark}{Remark}[section]

\vfuzz2pt
\topmargin=-.5in
\oddsidemargin=0.5in
\evensidemargin=0.5in
\textwidth=6.0in
\textheight=9.1in

\begin{document}
\title{A stabilized trace finite element method for partial differential equations on evolving surfaces\thanks{ 
\ifarxiv%
To appear in SINUM. Appendix part of this report is only in arXiv version.
\else%
Revision submitted to the editors on: 21 March 2018.
\fi%
\funding{C.L. was partially supported by the German Science Foundation (DFG) within the project ``LE 3726/1-1''; M.O. was partially supported by NSF through the Division of Mathematical Sciences grants 1717516 and 1522191. X.X. was partially supported by NSFC projects 11571354 and 91630208.}}}
\author{Christoph Lehrenfeld\thanks{Institute for Numerical and Applied Mathematics, University of G\"ottingen, G\"ottingen, Germany, \email{lehrenfeld@math.uni-goettingen.de}, \url{http://num.math.uni-goettingen.de/\string~lehrenfeld/} } \and Maxim A. Olshanskii\thanks{Department of Mathematics, University of Houston, Houston, Texas 77204-3008, and Sechenov University, Moscow 119991, Russian Federation,
\email{molshan@math.uh.edu}, \url{http://www.math.uh.edu/\string~molshan/}}
\and Xianmin Xu\thanks{LSEC, Institute of Computational Mathematics and Scientific/Engineering Computing,
  NCMIS, AMSS, Chinese Academy of Sciences, Beijing 100190, China, \email{xmxu@lsec.cc.ac.cn}, \url{http://lsec.cc.ac.cn/\string~xmxu/}.}}
\maketitle
\begin{abstract}
In this paper, we study a new numerical method for the solution of partial differential
equations on evolving surfaces.   The numerical method is built on the stabilized trace finite element method (TraceFEM) for the spatial discretization and finite differences for the time discretization. The TraceFEM uses a stationary background mesh, which can be chosen independent of time and the position of the surface. The stabilization ensures well-conditioning of the algebraic systems and defines a regular extension of the solution from the surface to its volumetric neighborhood. Having such an extension is essential for the numerical method to be well-defined.
The paper proves numerical stability and optimal order error estimates for the case of simplicial  background meshes and finite element spaces of order $m\ge1$. For the algebraic condition numbers of the resulting systems we prove estimates, which are independent of the position of the interface. The method allows that the surface and its evolution are given implicitly with the help of an indicator function. Results of numerical experiments for a set of 2D evolving surfaces are provided.
\end{abstract}

\begin{keywords}
  surface PDEs, evolving surfaces, TraceFEM, level set method
\end{keywords}

\begin{AMS}
   	65M60, 58J32
\end{AMS}

\section{Introduction}
Partial differential equations on evolving surfaces arise in a number of  mathematical models in natural sciences and engineering.   Well-known examples include the diffusion and transport of  surfactants along interfaces in multiphase fluids \cite{GReusken2011,Milliken,Stone}, diffusion-induced grain boundary motion \cite{GrainBnd1,GrainBnd2} and lipid interactions in moving cell membranes \cite{ElliotStinner,Novaketal}. Thus, recently there has been a significant  interest in developing and analyzing numerical methods for PDEs on time-dependent surfaces; see, for example, the review articles \cite{DEreview,olshanskii2016trace}. The present paper contributes to the field with an unfitted finite element methods for PDEs posed on implicitly defined time-dependent  surfaces and its complete stability and error analysis.

Geometrically unfitted finite element methods  exploit the idea of using a time-independent background finite element space to approximate the solution of a PDE posed on an embedded surface.
The background finite element space is defined on an ambient triangulation, which is not fitted to the surface.
There are several approaches that fit this framework. In the PDE extension approach, one extends the PDE off the surface to a volumetric computational domain in a special way such that the solution of the ambient PDE restricted to the surfaces solves the original problem. Further one solves this new PDE by a conventional discretization method in $\mathbb{R}^3$;
see \cite{bertalmio2001variational} and  \cite{XuZh} for the extension to evolving surface case.  In the trace finite element method, one takes an opposite approach. Instead of extending the surface PDE, one takes the traces of the background volumetric finite element functions on the embedded surface for the purpose of PDE approximation~ \cite{OlshReusken08}. In the TraceFEM, one also may add stabilization terms  which involve the restrictions of the background functions to the tetrahedra cut by the surface~\cite{Alg1}. Several authors have treated PDEs on time-dependent surfaces using this framework. Thus, a  method based on a characteristic-Galerkin formulation combined with the TraceFEM in space was proposed and analysed in \cite{hansbo2015characteristic}. An interesting variant of TraceFEM and narrow-band FEM for a conservation law on an evolving surface was devised in \cite{deckelnick2014unfitted}. A mathematically sound approach which entails rigorous stability and error analysis was investigated in \cite{olshanskii2014eulerian,olshanskii2014error}. In those papers, a PDE on an evolving closed surface $\Gamma(t)\subset\mathbb{R}^3$ was studied as an equation posed on a fixed space--time manifold $\Gs =\bigcup_{t \in (0,T)} \Gamma(t) \times \{t\}\subset \Bbb{R}^{4}$. Further a space--time trace finite element method was applied to approximate the PDE posed on $\Gs$. While the space--time TraceFEM was shown to be provably accurate, its implementation requires the numerical integration over the time slices of  $\Gs$. An algorithm for piecewise tetrahedral reconstruction of  $\Gs$ from the zero of a level-set function can be found in \cite{grande2014eulerian,L_SISC_2015}, but these reconstruction methods are not a part of standard scientific computing software.
Moreover, it remains a challenging problem to build a higher order reconstruction of $\Gs$. Recent attempts to build  geometrically unfitted finite element method that avoids numerical recovery of $\Gs$  are reported in \cite{hansbo2016cut,ERW16,olshanskii2017trace}.
At the time of writing this paper, the authors are unaware of stability or error analysis of these most recent methods that avoid reconstruction of $\Gs$.
Therefore, building an accurate, efficient and reliable unfitted finite element method for PDEs posed on surfaces is still a challenge. In particular, one may want the method to benefit from higher order elements, to avoid a reconstruction of $\Gs$, and to admit rigorous analysis.

The present paper addresses the challenge by suggesting a hybrid finite difference (FD) in time\,/\,TraceFEM in space method, which uses the restrictions of surface independent background FE functions on a steady discrete surface $\G{}{h}(t_n)$ for each time node $t_n$. A standard FD approximation is applied to treat the time derivative. Hence, opposite to the approaches in \cite{grande2014eulerian,olshanskii2014eulerian,olshanskii2014error,L_SISC_2015,hansbo2016cut} a reconstruction of $\Gs$ or numerical integration over $\Gs$ is not needed. Instead one needs an extension of the TraceFEM solution (but not of the PDE!) from $\G{}{h}(t_n)$ to a narrow band of tetrahedra containing $\G{}{h}(t_n)$. In \cite{olshanskii2017trace} it was suggested that a quasi-normal extension of the discrete solution by a variant of the fast marching method (FMM) can be used allowing the modular application of the standard tools: steady-surface TraceFEM and FMM. Numerical experiments in \cite{olshanskii2017trace} demonstrated that the piecewise linear TraceFEM combined with BDF2 in time and a variant of the FMM from~\cite{GReusken2011} is  second-order accurate for $h=\Delta t$, unconditionally stable and capable to handle the case of surfaces undergoing topological changes. Here we build on the approach from \cite{olshanskii2017trace}, with the following important modification: The finite element formulation is augmented with a volumetric integral that includes derivatives of test and trial functions along the quasi-normal directions to   $\G{}{h}(t_n)$. The integral is computed over  tetrahedra cut by the surface at the given time $t_n$ and possibly (depending on the surface normal velocity and the time step size) over a few more layers of the tetrahedra. The benefit of the augmentation is threefold: first, it implicitly defines an extension of the solution to a narrow band of the surface hence eliminating the need for FMM or any other additional modulus; second, it stabilizes the method algebraically leading to well-conditioned matrices; finally, it leads to a concise variational formulation of the method and so allows numerical stability and error analysis. The paper presents such analysis as well  as the analysis of algebraic stability for the fully discrete method (no simplified assumptions are made such as numerical integration over exact surface).
The analysis allows background finite element spaces of arbitrary order $m\ge1$.
We notice however that for $m > 1$ and optimal order convergence, numerical integration with higher order accuracy is required which is a non-trivial task; cf. remark \ref{rem:integration} below.
For the time discretization we apply the backward Euler method. Higher order in time discretizations are straightforward, and we illustrated them in numerical example section, but they are not covered by the presented analysis.

The remainder of the paper is organized as follows. In section~\ref{s:form} we review the surface transport--diffusion equation as an example of a PDE posed on an evolving surface.  To elucidate the main ideas behind the method and analysis, section~\ref{s:time} introduces a semi-discrete method (FD in time \,/\, continuous in space) and presents its stability analysis. Further, in section~\ref{s:FEM} we devise a fully discrete method. In section~\ref{s:Analysis} the core stability and error analysis of the paper is given.
In section~\ref{s:algebra} we prove bounds on condition numbers of resulting matrices, which are independent of how the surface cuts through the background mesh.
Results of several numerical experiments, which illustrate the theoretical findings and show optimal convergence order also in weaker norms, are collected in section~\ref{s:Numerics}. Section~\ref{s:Conclusions} gives some conclusions and discusses interesting open problems.

\section{Mathematical problem} \label{s:form}

Consider a surface $\Gamma(t)$ passively advected by a smooth velocity field $\bw=\bw(\bx,t)$, i.e. the normal velocity of $\Gamma(t)$ is given by $\bw \cdot \bn$, with
$\bn$ the unit normal on $\Gamma(t)$. We assume that for all $t \in [0,T] $,  $\Gamma(t)$ is a smooth hypersurface that is  closed ($\partial \Gamma =\emptyset$), connected, oriented, and contained in a fixed domain $\Omega \subset \Bbb{R}^d$, $d=2,3$. In the remainder we consider $d=3$, but all results have analogs for the case $d=2$.

As an example of the surface  PDE, consider the transport--diffusion equation modelling the conservation of a scalar quantity
$u$ with a diffusive flux on $\Gamma(t)$ (cf. \cite{James04}):
\begin{equation}
\dot{u} + ({\Div}_\Gamma\bw)u -{ \nu}\Delta_{\Gamma} u=0\quad\text{on}~~\Gamma(t), ~~t\in (0,T],
\label{transport}
\end{equation}
 with initial condition $u(\bx,0)=u_0(\bx)$ for $\bx \in \G{0}{} :=\Gamma(0)$.
 Here $\dot{u}$ denotes the advective material derivative, ${\Div}_\Gamma:=\operatorname{tr}\left( (I-\bn\bn^T)\nabla\right)$ is the surface divergence,  $\Delta_\Gamma$ is the  Laplace--Beltrami operator, and $\nu>0$ is the constant diffusion coefficient. The well-posedness of suitable weak formulations of \eqref{transport}  has been proven in \cite{Dziuk07,olshanskii2014eulerian,alphonse2014abstract}.

The equation \eqref{transport} can be written in several equivalent forms, see \cite{DEreview}.  In particular,
for any smooth extension of   $u$ from the space--time manifold
\[
\Gs = \bigcup\limits_{t \in (0,T)} \Gamma(t) \times \{t\},\quad  \Gs\subset \Bbb{R}^{4},
\]
 to a neighborhood of $\Gs$, one can expand $\dot{u}$ using the Cartesian derivatives
\[\dot{u}= \frac{\partial u}{\partial t} + \bw \cdot \nabla u.\]

In this paper, we assume that $\Gamma(t)$ is the zero-level set of a smooth level-set function $\phi(\bx,t)$,
 \[
\Gamma(t)=\{\bx\in\mathbb{R}^3\,:\,\phi(\bx,t)=0\},
\]
such that $|\nabla \phi|\ge c>0$ in $\O(\Gs)$, a neighborhood of $\Gs$.
Note that we do not assume that $\phi$ is a signed distance function. The method that we introduce can deal with more general level set functions. However, the analysis (sections \ref{s:stab:semi-disc}, \ref{s:FEM} and \ref{s:Analysis}) uses the assumption of a level set function with the signed distance property in order to keep the amount of technical details at a comprehensive level. 

For a smooth $u$ defined on $\Gs$, a function $u^e$ denotes the extension of $u$ to $\O(\Gs)$ along spatial normal directions to the level-sets of $\phi$, it holds
$\nabla u^e\cdot\nabla \phi=0$ in $\O(\Gs)$, $u^e=u$ on $\Gs$. The extension $u^e$ is smooth once $\phi$  and $u$ are both smooth.
 Further, we use the same notation $u$ for the function on $\Gs$ and its extension to  $\O(\Gs)$. We shall write $\O(\Gamma(t))$ to denote a neighborhood of $\Gamma(t)$ in $\mathbb{R}^3$, which is the time cross-section of $\Gs$ ,
$\O(\Gamma(t)):=\{\bx\in\mathbb{R}^3\,:\,(\bx,t)\in\O(\Gs)\}$.

We can rewrite  \eqref{transport} as follows:
\begin{equation}
\left\{
\begin{array}{rl}
\frac{\partial u }{\partial t}+\bw\cdot\nabla u + ({\Div}_\Gamma\bw)u -{ \nu}\Delta_{\Gamma} u=0&\text{on}~~\Gamma(t),\\
\nabla u\cdot\nabla\phi=0 &\text{in}~~\O(\Gamma(t))
\end{array}~~t\in (0,T].
\right.
\label{transport1}
\end{equation}
This formulation will be used for the discretization method.

\section{Discretization in time} \label{s:time}
\subsection{Preliminaries and notation}
We introduce notation for the surfaces at discrete time levels.
For simplicity of notation, consider the uniform time step $\Delta t=T/N$, and let  $t_n=n\Delta t$ and $I_n=[t_{n-1},t_n)$.
Denote by $u^n$ an approximation of $u(t_n)$, define $\G{n}{} := \Gamma(t_n)$ and $\phi^n(\bx) := \phi(\bx,t_n)$, $n=0,\dots,N$.
We assume that $\O(\Gs)$ is a sufficiently large neighborhood of $\Gs$ such that
\begin{equation}\label{ass1}
  \G{n}{}\subset\O(\G{n-1}{})\quad\text{for}~n=1,\dots,N, ~ \text{cf. Fig. \ref{fig:Gammaneighborhood}.}
\end{equation}
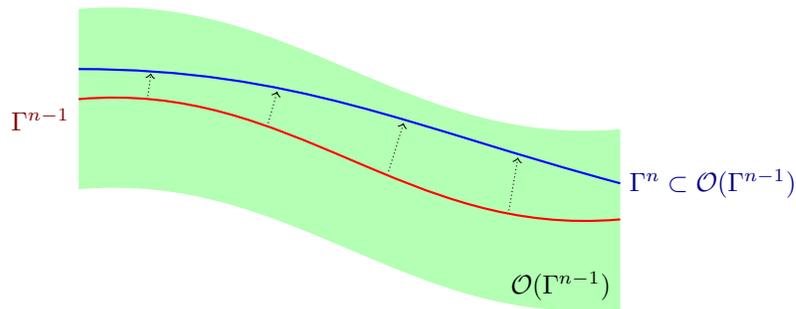
\begin{figure}
  \vspace*{-0.4cm}
  \begin{center}
    \begin{tikzpicture}[scale=0.8]
      \draw[draw=none, fill=green, opacity=0.3] (1,5)
      to[out=5,in=185] (10,3)
      -- (10,0)
      to[out=185,in=5] (1,2)
      -- (1,5)
      ;
      \draw[red,thick] (1,3.5) to[out=5,in=185] (10,1.5);
      \draw[blue,thick] (1,4) to[out=0,in=165] (10,2.1);
      \node[above left] at (10,0) {\color{black} $\O(\G{n-1}{})$};
      \node[below left] at (1,3.5) {\color{red!50!black} $\G{n-1}{}$};
      \node[right] at (10,2.1) {\color{blue!50!black} $\G{n}{} \subset \O(\G{n-1}{})$};
      \draw[->,densely dotted,black] (2.15,3.55) -- (2.2,3.925);
      \draw[->,densely dotted,black] (4.15,3.1) -- (4.3,3.625);
      \draw[->,densely dotted,black] (6.15,2.3) -- (6.4,3.1);
      \draw[->,densely dotted,black] (8.15,1.65) -- (8.3,2.525);
    \end{tikzpicture}
  \end{center}
  \vspace*{-0.4cm}
  \caption{Sketch of interface positions at different time instances and the neighborhood of one of these interfaces.}
  \label{fig:Gammaneighborhood}
\end{figure}
In this case, $u^{n-1}$ is well-defined on $\G{n}{}$.
Further, we use the following abbreviations in norms and scalar products for functions $u,v$ in a domain $G$:
$\innerprod{u}{v}_{G} := \innerprod{u}{v}_{L^2(G)}$,
$\norm{u}_{G} := \norm{u}_{L^2(G)}$, $\norm{u}_{\infty,G} := \norm{u}_{L^{\infty}(G)}$. For a function $v$ defined on $\Gamma(t)$ or on $\O(\Gamma(t))$ we use
\[
\norm{v}_{\infty,I_n}
       := \sup\limits_{t\in I_n} \norm{v}_{\infty,\Gamma(t)},
    \qquad \norm{v}_{\infty}
       := \sup\limits_{t\in [0,T]} \norm{v}_{\infty,\Gamma(t)}.
\]

We also introduce the decomposition $\bw=\wt+\wn\bn$
where $\wt$ and $\wn\bn$ denote the tangential and normal parts of the velocity vector field $\bw$ on $\Gamma(t)$.

\subsection{Time discretization method}
The implicit Euler method for
\eqref{transport1} is
\begin{equation}
\left\{
\begin{split}
\frac{u^n-u^{n-1}}{\Delta t}+\bw^n\cdot\nabla u^n+(\Div_{\Gamma} \bw^n) u^n-\nu\Delta_{\Gamma} u^n&=0\quad\text{on}~~\G{n}{},\\
\nabla u^{n}\cdot\nabla\phi^n&=0\quad\text{in}~~\O(\G{n}{}).
\end{split}\right.
\label{e:ImEuler}
\end{equation}
Obvious modifications are required to devise higher order time discretizations.
For example, for the $O(\Delta t^2)$ method one can use BDF2 approximation of the time derivative, replacing
$\frac{u^n-u^{n-1}}{\Delta t}$ by $\frac{3u^n-4u^{n-1}+u^{n-2}}{2\Delta t} $  in \eqref{e:ImEuler} and additionally assuming $\G{n}{}\subset\O(\G{n-2}{})$, cf. also Remark \ref{rem:bdf2}.\\

\paragraph{Variational formulation in space}
The basis for the spatial discretization is a variational formulation in space. For every time instance $t$ we denote by $\mathcal{V}(t)$ the Hilbert space of functions which are defined in a neighborhood of $\Gamma(t)$ and are constant in the direction of the gradient of $\phi$ (the normal direction),
$\mathcal{V}(t) := \overline{ \mathcal{V}_\ast(t)}^{\Vert \cdot \Vert_{\mathcal{V}}}$  with
\begin{equation} \label{eq:defV}
  \mathcal{V}_\ast(t) := \{ v \in C^2(\O({\Gamma(t)})) \mid \nabla v \cdot \nabla \phi = 0 \} \text{ and } \Vert v \Vert_{\mathcal{V}} := \left(\Vert v \Vert_{H^1(\Gamma(t))}^2 + \Vert \nabla \phi \cdot \nabla v \Vert_{L^2(\O(\Gamma(t)))}^2\right)^{\frac12},
\end{equation}
where $\Vert v \Vert_{H^1(\Gamma(t))}^2 = \Vert v \Vert_{\Gamma(t)}^2 + \Vert \nabla v \Vert_{\Gamma(t)}^2$.
Functions in $\mathcal{V}_\ast(t)$ have weak derivatives in $\mathcal{O}(\Gamma(t))$; cf.
\ifarxiv%
Lemma \ref{lem:app:wd} in the appendix.
\else%
\cite[Lemma 18]{arXiv}.
\fi%
Further, note that on $\mathcal{V}(t)$ there holds $\Vert \cdot \Vert_{H^1(\Gamma(t))} = \Vert \cdot \Vert_{\mathcal{V}}$ and thus $\Vert \cdot \Vert_{H^1(\Gamma(t))}$ is a norm.
Assume $u^{n-1} \in L^2(\G{n-1}{})$ is given,  with $u^{n-1} \in \mathcal{V}(t_{n-1})$ and \eqref{ass1}.
We seek for $u^n \in \mathcal{V}(t_n)$
such that for all $v \in \mathcal{V}(t_n)$ there holds
\begin{equation} \label{e:conttraceFEM1}
  \int_{\G{n}{}}\left(\frac{1}{\Delta t} u^n +\bw\cdot\nabla u^n +(\Div_{\Gamma} \bw) u^n\right) v\, ds+\nu\int_{\G{n}{}} \! \nabla_{\Gamma}u^n\! \cdot\!\nabla_{\Gamma}v \, ds =
  \int_{\G{n}{}} \frac{1}{\Delta t} u^{n-1} v \, ds.
\end{equation}
Note that the second equation of \eqref{e:ImEuler} is hidden in the definition of space $\mathcal{V}(t _n)$ in \eqref{e:conttraceFEM1}. Below, in the finite element method we will impose it weakly through the variational formulation.

\paragraph{Integration by parts characterization of the convection term}
Since $\Gamma(t)$ is smooth and closed, we have the integration by parts identity:
\begin{equation} \label{e:intpartc}
\begin{aligned}
& \int_{\Gamma(t)}(\bw\cdot\nabla u)v\,ds =\int_{\Gamma(t)}(\wt\cdot\nabla_\Gamma u)v\,ds=
-\int_{\Gamma(t)}(\wt\cdot\nabla_\Gamma v +(\Div_\Gamma\wt)v)u\,ds \\
 & = \frac12 \int_{\Gamma(t)} (\wt\cdot\nabla_\Gamma u v - \wt\cdot\nabla_\Gamma v u)\,ds
   -\frac12 \int_{\Gamma(t)} (\Div_\Gamma\wt) u v\,ds
\end{aligned}
\end{equation}
for $u,v \in \mathcal{V}(t)$. Note that we exploited $\bn=\nabla \phi/|\nabla \phi|$ on $\Gamma(t)$ and so  $\bn\cdot\nabla u=\bn\cdot \nabla v = 0$ here. We will use the characterization \eqref{e:intpartc} in our analysis and also  to define the finite element method.

\paragraph{Unique solvability}
To guarantee unique solvability in every time step, we ask for coercivity of the left-hand side bilinear form in \eqref{e:conttraceFEM1} with respect to $\Vert \cdot \Vert_{H^1(\G{n}{})}$. Testing  \eqref{e:conttraceFEM1} with $v=u^n$  and exploiting \eqref{e:intpartc} clarifies that a sufficient condition for coercivity is
\begin{equation} \label{e:xi}
  \Delta t \leq (2\xi)^{-1} \text{ with } \xi := \norm{ \Div_\Gamma (\bw -\frac12 \wt) }_{\infty}.
\end{equation}
 Using the notation $\kappa(t)=\Div_\Gamma\bn_\Gamma$ for the  mean curvature, we have
 $ \Div_\Gamma \bw = \Div_\Gamma \wt + \kappa(t) \wn$ and can also express condition \eqref{e:xi} with
 $$
 \xi =  \norm{ \frac12 \Div_\Gamma\wt + \kappa(t) \wn }_{\infty}.
 $$

\subsection{Stability of the semi-discrete method} \label{s:stab:semi-disc}
We now show a numerical stability bound for $u^n$. The goal of this paper is the study of a fully discrete method, but the treatment of the semi-discrete problem \eqref{e:ImEuler} gives some insight and serves for the purpose of better exposition.
From now on we \emph{assume that $\phi$ is the signed distance function} for $\Gamma(t)$ in $\O(\Gamma(t))$ for $t\in[0,T]$. Although
 this assumption is not essential, it simplifies our further (still rather technical) analysis. We assume $\Gamma(t)$ and its evolution are smooth such that $\phi\in C^2(\O(\Gs))$

Denote by $\bp(\bx,t)\,:\,\O(\Gamma(t))\to \Gamma(t)$ the closest point projection on $\Gamma(t)$. Then using that $\phi$ is the signed distance function  the second  equation in \eqref{e:ImEuler} can be written as $u^n(\bx)=u^n(\bp^n(\bx))$ in $\O(\G{n}{})$, $\bp^n(\bx)=\bp(\bx,t_n)$; and the passive advection of  $\Gamma$ by the velocity field yields
\begin{equation}\label{d_evol}
  \frac{\partial\phi}{\partial t}= - \wn\circ\bp\quad\text{in}~\O(\Gs);
\end{equation}
see \cite[Sect.~2.1]{DziukElliot2013a}.
We need the following result.
\smallskip

\begin{lemma}\label{l_est1} For $v\in L^2(\G{n-1}{})$ the following estimate holds:
\begin{equation}\label{est1}
  \|v\circ\bp^{n-1}\|_{\G{n}{}}^2 \le (1+c_{\ref{l_est1}}\Delta t)  \|v\|_{\G{n-1}{}}^2 \quad
\text{with} \quad  c_{\ref{l_est1}} = c (\norm{\wn}_{\infty,I_n} + \Delta t \norm{\nabla_\Gamma \wn}_{\infty,I_n})
\end{equation}
and a constant $c$ independent of $\Delta t$, $n$, $v$.
\end{lemma}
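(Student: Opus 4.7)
The plan is a change-of-variables argument. By assumption \eqref{ass1}, for each $y\in\G{n-1}{}$ the normal ray $s\mapsto y+s\,\bn^{n-1}(y)$ hits $\G{n}{}$ transversally at a unique small $s=d^n(y)$, and since $\phi^{n-1}$ is the signed distance function, $\bn^{n-1}=\nabla\phi^{n-1}$. The map $q(y):=y+d^n(y)\,\bn^{n-1}(y)$ is thus a smooth diffeomorphism $\G{n-1}{}\to\G{n}{}$ whose inverse is exactly $\bp^{n-1}|_{\G{n}{}}$. By the surface change-of-variables formula,
\begin{equation*}
  \|v\circ\bp^{n-1}\|_{\G{n}{}}^2=\int_{\G{n-1}{}} v(y)^2\, J_q(y)\,dA(y),
\end{equation*}
so the proof reduces to showing $J_q(y)\le 1+c_{\ref{l_est1}}\Delta t$ pointwise.

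Next I would quantify $d^n$ and $\nabla_\Gamma d^n$ using \eqref{d_evol}. For $y\in\G{n-1}{}$ we have $\bp(y,t_{n-1})=y$ and $\phi^{n-1}(y)=0$, so Taylor expansion in time combined with \eqref{d_evol} gives $\phi^n(y)=-\Delta t\,\wn(y)+O(\Delta t^2)$. Because $\phi^n$ is the signed distance function for $\G{n}{}$ with $|\nabla\phi^n|=1$ and $\nabla\phi^n(y)\cdot\bn^{n-1}(y)=1+O(\Delta t)$, solving $\phi^n(y+d^n(y)\bn^{n-1}(y))=0$ by Taylor expansion along the ray yields
\begin{equation*}
  d^n(y)=\Delta t\,\wn(y)+O(\Delta t^2),\qquad \nabla_\Gamma d^n(y)=\Delta t\,\nabla_\Gamma\wn(y)+O(\Delta t^2),
\end{equation*}
uniformly in $y$, where the constants in the $O$--terms depend only on the $C^2$--geometry of $\Gs$.

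The third step is the Jacobian calculation, which is classical for a normal graph. In a local orthonormal frame of principal directions of $\G{n-1}{}$ at $y$ with principal curvatures $\kappa_i$, one has $Dq(\tau_i)=(1-d^n\kappa_i)\tau_i+(\partial_{\tau_i} d^n)\bn^{n-1}$, so
\begin{equation*}
  J_q(y)=\Bigl|\prod_i (1-d^n(y)\kappa_i)\Bigr|\,\sqrt{1+|\widetilde{\nabla}_\Gamma d^n(y)|^2},\qquad \widetilde{\nabla}_{\Gamma,i}=\frac{\partial_{\tau_i} d^n}{1-d^n\kappa_i}.
\end{equation*}
Inserting the bounds from step two and expanding, the leading correction is $|d^n|\cdot\|\kappa\|_\infty=O(\Delta t\,\|\wn\|_{\infty,I_n})$, while the $|\nabla_\Gamma d^n|^2$ contribution is of size $\Delta t^2\|\nabla_\Gamma\wn\|_{\infty,I_n}^2$, which is absorbed into $\Delta t\cdot(\Delta t\|\nabla_\Gamma\wn\|_{\infty,I_n})$ under the standing smoothness assumptions on $\bw$. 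Collecting, $J_q(y)\le 1+c\Delta t(\|\wn\|_{\infty,I_n}+\Delta t\,\|\nabla_\Gamma\wn\|_{\infty,I_n})$ with $c$ depending only on the geometry of $\Gs$, and plugging this into the change-of-variables identity yields \eqref{est1}.

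The main obstacle is step two: carefully pairing \eqref{d_evol} with the signed-distance property $|\nabla\phi^n|=1$ so that the dominant $\Delta t\,\wn$ part of $d^n$ (and hence the corresponding $\Delta t\,\nabla_\Gamma\wn$ in its surface gradient) appears with sharp $O(\Delta t^2)$ remainders of the right structure; the Jacobian computation and the final change of variables are then routine.
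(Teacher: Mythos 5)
Your proposal is correct and follows the same basic strategy as the paper: reduce \eqref{est1} to a pointwise bound on the ratio of surface measures under the normal (closest-point) correspondence between $\G{n-1}{}$ and $\G{n}{}$, and obtain the $O(\Delta t)$ bound from the signed-distance property together with \eqref{d_evol}, which gives $|\phi^{n-1}|\lesssim \Delta t\,\norm{\wn}_{\infty,I_n}$ on $\G{n}{}$. The difference is purely in how the measure ratio is controlled. The paper quotes Demlow's formula \eqref{aux2a}, in which the ratio factors into curvature terms $(1-\phi^{n-1}\kappa_i)$ and the normal-alignment term $\nabla\phi^n\cdot\nabla\phi^{n-1}$; the latter is handled by the identity $1-\nabla\phi^n\cdot\nabla\phi^{n-1}=\frac12|\nabla\phi^n-\nabla\phi^{n-1}|^2$ combined with $\nabla \wne=(\bI-\phi^{n-1}\nabla^2\phi^{n-1})\nabla_\Gamma\wn\circ\bp^{n-1}$, so the misalignment is quadratically small in $\Delta t$ without ever differentiating the height function. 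You instead write $\G{n}{}$ as a normal graph $y\mapsto y+d^n(y)\bn^{n-1}(y)$ and compute the Jacobian from the first fundamental form, which requires the extra step of bounding $\nabla_\Gamma d^n$ by implicit differentiation of $\phi^n(y+d^n\bn^{n-1})=0$; this works (the tangential component of $\nabla\phi^n$ at the graph point equals that of $\nabla\phi^n-\nabla\phi^{n-1}$, hence is $O(\Delta t\,\norm{\nabla_\Gamma\wn}_{\infty,I_n})$), and your factor $\bigl(1+|\widetilde{\nabla}_\Gamma d^n|^2\bigr)^{1/2}$ is exactly the reciprocal of the paper's alignment factor, so the two computations are equivalent. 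Your route is more self-contained (no citation of the measure-ratio proposition) at the cost of the $d^n$, $\nabla_\Gamma d^n$ expansions; the paper's route is shorter because the quadratic smallness of the normal misalignment comes for free. Note also that, like the paper, you end up with a contribution $\Delta t^2\norm{\nabla_\Gamma\wn}_{\infty,I_n}^2$ and absorb it into $\Delta t\cdot\Delta t\norm{\nabla_\Gamma\wn}_{\infty,I_n}$ using boundedness of $\nabla_\Gamma\wn$; this matches how the stated constant $c_{\ref{l_est1}}$ must be read, so it is not a gap.
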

\begin{proof}
  For $\by\in \G{n-1}{}$ denote by $\kappa_i(\by)$, $i=1,2$, two principle curvatures, and let 
\begin{equation}\label{eq:kappa}
\kappa_i(\bx)=\kappa_i(\bp^{n-1}(\bx))\left[1+\phi^{n-1}(\bx)\kappa_i(\bp^{n-1}(\bx))\right]^{-1}\quad\bx\in\O(\G{n-1}{}).
\end{equation}
The surface measures on $\G{n-1}{}$ and $\G{n}{}$ satisfy, see, e.g., \cite[Proposition 2.1]{Demlow06},
\begin{equation}\label{aux2a}
\begin{split}
  \mu^n(\bx)d\bs^n(\bx)&= d\bs^{n-1}(\bp^{n-1}(\bx)),\quad \bx\in\G{n}{}, \quad \text{ with }\\
  \mu^n(\bx)&=
  (1-\phi^{n-1}(\bx)\kappa_1(\bx))
  (1-\phi^{n-1}(\bx)\kappa_2(\bx))
  \nabla\phi^n(\bx)^T\nabla\phi^{n-1}(\bx).
\end{split}
\end{equation}
We want to bound $| \mu^n(\bx) - 1|$.
Using $\phi^n(\bx)=0$ for $\bx\in\G{n}{}$ and \eqref{d_evol}, we get
\begin{equation}\label{aux2}
|\phi^{n-1}(\bx)|= |\phi^{n-1}(\bx)-\phi^n(\bx)|\le \norm{\wn}_{\infty,I_n}\Delta t.
\end{equation}
Using the smoothness of $\phi$, $|\nabla\phi|=1$ in $\O(\Gs)$ and \eqref{d_evol}, we also get with $\wne := \wn \circ \bp$
\begin{align}\label{aux3a}
  |1-\nabla\phi^n(\bx)^T\nabla\phi^{n-1}(\bx)|
  = \frac12 |\nabla\phi^n(\bx)-\nabla\phi^{n-1}(\bx)|^2
  \le \frac12 \sup_{t\in I_n}\norm{\nabla \wne}_{\infty,\O(\Gamma(t))}^2|\Delta t|^2.
\end{align}
We further note the identity, see, e.g., \cite[(2.2.16)]{Demlow06},
\[
\nabla \wne(\bx)=\left(\bI-\phi^{n-1}(\bx)\nabla^2\phi^{n-1}(\bx)\right)\nabla_\Gamma \wn(\bp^{n-1}(\bx)).
\]
From this and \eqref{aux3a} we conclude
\begin{equation}\label{aux3}
|1-\nabla\phi^n(\bx)^T\nabla\phi^{n-1}(\bx)|\le  \frac12 c \ \norm{ \nabla_\Gamma \wn }_{\infty,I_n}^2|\Delta t|^2,
\end{equation}
with a constant $c$ that depends only on the curvatures of $\Gamma$.
Now \eqref{aux2a},  \eqref{aux2} and \eqref{aux3} imply
\begin{equation*}\label{mu_n}
|1-\mu^n(\bx)|\le c_{\ref{l_est1}} \Delta t, \quad\text{for}~\bx\in\G{n}{},
\end{equation*}
 and so \eqref{est1} holds.
\end{proof}
\medskip
In the next lemma we show \emph{numerical stability} of the semi-discrete scheme.
\begin{lemma}\label{lem:conttraceFEM1}
  For $\{u^k\}_{k=1,\dots,N}$ the solution of \eqref{e:conttraceFEM1} with initial data $u^0 \in L^2(\G{0}{})$ there holds
\begin{equation}\label{stab_semi}
\|u^k\|_{\G{k}{}}^2+2\Delta t\nu\sum_{n=1}^{k}\|\nabla_\Gamma u^n\|_{\G{n}{}}^2 \le \exp(c_{\ref{lem:conttraceFEM1}} t_k)\|u^0\|_{\G{0}{}}^2,\quad \text{for}~k=0,\dots,N,
\end{equation}
for a constant $c_{\ref{lem:conttraceFEM1}}$ that is independent of $\Delta t$ and k.
\end{lemma}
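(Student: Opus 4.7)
The plan is to run a standard energy argument tailored to the moving-surface setting: test the variational formulation with $v = u^n$, use the skew-symmetry identity \eqref{e:intpartc} to dispose of most of the convection, estimate the right-hand side by Cauchy--Schwarz, invoke Lemma \ref{l_est1} to pass from a norm over $\G{n}{}$ to a norm over $\G{n-1}{}$, and finally close with a discrete Gr\"onwall argument.

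In detail, I would first substitute $v = u^n$ in \eqref{e:conttraceFEM1}. Applying \eqref{e:intpartc} with $u = v = u^n$ (which is legitimate since $u^n \in \mathcal{V}(t_n)$) the convection contribution reduces to $-\tfrac12\int_{\G{n}{}}(\Div_\Gamma \wt)(u^n)^2\,ds$, so that combining with the reaction term yields an integrand equal to $\Div_\Gamma(\bw - \tfrac12\wt)(u^n)^2$, which is bounded by $\xi\|u^n\|_{\G{n}{}}^2$ via \eqref{e:xi}. The right-hand side I would handle with weighted Cauchy--Schwarz, $\int_{\G{n}{}} \Delta t^{-1} u^{n-1} u^n\,ds \le \tfrac{1}{2\Delta t}\|u^{n-1}\|_{\G{n}{}}^2 + \tfrac{1}{2\Delta t}\|u^n\|_{\G{n}{}}^2$. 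Multiplying by $2\Delta t$ and collecting terms gives
\begin{equation*}
(1-2\xi\Delta t)\|u^n\|_{\G{n}{}}^2 + 2\Delta t\,\nu\,\|\nabla_\Gamma u^n\|_{\G{n}{}}^2 \le \|u^{n-1}\|_{\G{n}{}}^2.
\end{equation*}

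The essential step is now to move $\|u^{n-1}\|_{\G{n}{}}^2$ back to the previous surface. Since $u^{n-1}\in\mathcal{V}(t_{n-1})$ and $\phi$ is the signed distance function, $u^{n-1}$ is constant along normals to $\G{n-1}{}$ in $\O(\G{n-1}{})$, i.e.\ $u^{n-1}(\bx)=u^{n-1}(\bp^{n-1}(\bx))$ on $\G{n}{}\subset\O(\G{n-1}{})$ by \eqref{ass1}. Lemma \ref{l_est1} then yields $\|u^{n-1}\|_{\G{n}{}}^2 \le (1+c_{\ref{l_est1}}\Delta t)\|u^{n-1}\|_{\G{n-1}{}}^2$. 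Assuming $\Delta t$ is small enough that $2\xi\Delta t \le \tfrac12$ (which is compatible with \eqref{e:xi}), dividing by $(1-2\xi\Delta t)$ and using $1/(1-2\xi\Delta t)\le 1+4\xi\Delta t$ produces
\begin{equation*}
\|u^n\|_{\G{n}{}}^2 + 2\Delta t\,\nu\,\|\nabla_\Gamma u^n\|_{\G{n}{}}^2 \le (1+C\Delta t)\|u^{n-1}\|_{\G{n-1}{}}^2,
\end{equation*}
with $C$ depending only on $\xi$ and $c_{\ref{l_est1}}$.

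Iterating this inequality in the telescoping form $a_k \le (1+C\Delta t)^{k-n}a_{n-1} - \sum (1+C\Delta t)^{k-n}b_n$ with $a_n=\|u^n\|_{\G{n}{}}^2$ and $b_n = 2\Delta t\,\nu\,\|\nabla_\Gamma u^n\|_{\G{n}{}}^2$, and using $(1+C\Delta t)^{k-n}\ge 1$ to drop the weights in front of the dissipation sum, gives $\|u^k\|_{\G{k}{}}^2 + 2\Delta t\,\nu\sum_{n=1}^k\|\nabla_\Gamma u^n\|_{\G{n}{}}^2 \le (1+C\Delta t)^k\|u^0\|_{\G{0}{}}^2 \le \exp(Ct_k)\|u^0\|_{\G{0}{}}^2$, which is \eqref{stab_semi} with $c_{\ref{lem:conttraceFEM1}}=C$. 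The only nontrivial ingredient is the change-of-surface inequality in the previous paragraph, but Lemma \ref{l_est1} is precisely designed to supply it; the rest of the argument is a textbook implicit-Euler energy estimate.
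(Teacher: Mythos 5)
Your proof is correct and follows essentially the same route as the paper: test with $u^n$, use the integration-by-parts identity \eqref{e:intpartc} together with the bound \eqref{e:xi} on the divergence term, invoke the normal-extension property $u^{n-1}=u^{n-1}\circ\bp^{n-1}$ so that Lemma \ref{l_est1} transfers the right-hand side to $\G{n-1}{}$, and conclude for $\Delta t$ small. The only (harmless) difference is cosmetic: you absorb $(1-2\xi\Delta t)^{-1}$ per step and iterate the resulting multiplicative recursion, whereas the paper sums the per-step inequalities and applies the discrete Gronwall lemma; both yield \eqref{stab_semi} with a constant independent of $\Delta t$ and $k$.
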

\begin{proof}
We test \eqref{e:conttraceFEM1} with $2 u^n$ and apply \eqref{e:intpartc} to get
\[
\|u^n\|_{\G{n}{}}^2+\|u^n-u^{n-1}\|_{\G{n}{}}^2 + 2\Delta t\nu\|\nabla_\Gamma u^n\|_{\G{n}{}}^2  + 2\Delta t (\Div_\Gamma (\bw -\frac12 \wt)u^n,u^n)_{\G{n}{}}= \|u^{n-1}\|_{\G{n}{}}^2.
\]
Now we recall that the second equation in \eqref{e:ImEuler} implies  $u^{n-1}(\bx)=u^{n-1}(\bp^{n-1}(\bx))$ on $\G{n}{}$ and we use  \eqref{est1} for the right-hand side term; we also estimate the divergence term using the definition of $\xi$ in \eqref{e:xi}. This gives
\begin{equation}\label{aux4}
(1-2\xi\Delta t)\|u^n\|_{\G{n}{}}^2+2\Delta t\nu\|\nabla_\Gamma u^n\|_{\G{n}{}}^2 \le (1+c_{\ref{l_est1}} \Delta t)  \|u^{n-1}\|_{\G{n-1}{}}^2
\end{equation}
We sum up these inequalities for $n=1,\dots,k$, $k\le N$, and get
\[
\alpha \|u^k\|_{\G{k}{}}^2+2\Delta t\nu\sum_{n=1}^{k}\|\nabla_\Gamma u^n\|_{\G{n}{}}^2 \le \|u^0\|_{\G{0}{}}^2+(c_{\ref{l_est1}}+2\xi)\Delta t \sum_{n=0}^{k-1}  \|u^{n}\|_{\G{n}{}}^2,\quad \text{with}~\alpha=1-2\xi\Delta t>0.
\]
The quantities  $c$ and $\xi$ depend only on the PDE problem data such as $\bw$ and $\Gamma$, but not on numerical parameter $\Delta t$. In particular, one can always assume $\Delta t$ sufficiently small such that $\alpha>\frac12$.
Applying discrete Gronwall's inequality leads to the stability estimate \eqref{stab_semi}.
\end{proof}

\begin{remark}\rm \label{rem:expgrowth}
The stability estimate \eqref{stab_semi} admits exponential growth. This is rather natural, since the divergence term in \eqref{transport} is not sign definite and the concentration $u$ may grow exponentially if the (local) area of $\Gamma(t)$ shrinks when the surface evolves; see, e.g., analysis and a priori estimates in \cite{olshanskii2014eulerian}. The exponential growth does not happen if the divergence term is non-negative or if the tangential diffusion of $u$ is strong enough to suppress such growth; cf. Proposition~4.5 in  \cite{olshanskii2014eulerian}. Stability analysis may account for this phenomena by invoking conservation of total mass principle and the Friedrichs inequality,
\begin{equation*}\label{Fr}
\int_{\Gamma(t)} |\nabla_\Gamma u|^2 \, ds \geq c_F(t) \int_{\Gamma(t)} ( u- \frac{1}{|\Gamma(t)|} \bar u)^2 \, ds\quad\text{for all}~t\in[0,T],
\end{equation*}
with $c_F(t) >0$ and $\bar u(t):= \int_{\Gamma(t)} u(s,t)\, ds$. If no additional care is taken, the numerical method \eqref{e:ImEuler} conserves mass only approximately. One way to ensure total mass conservation for the numerical solution is to
introduce a Lagrange multiplier from $\mathbb{R}$ and to add the constraint $\bar u^n-\bar u^0=0$ to the system \eqref{e:ImEuler}.
The alternative is to augment  the left-hand side of \eqref{e:ImEuler} with the penalty term $\sigma(\bar u^n-\bar u^0)\bar v$, with an augmentation  parameter $\sigma\ge0$, as was done in \cite{olshanskii2014error} for the analysis of the space--time method.
The stabilizing term improves the mass conservation property and helps to make use of the Friedrichs inequality in the stability estimate. We skip the arguments here, which largely repeat the analysis above and the one in \cite{olshanskii2014error}.
These arguments bring one to the numerical stability estimate,
\begin{equation*}\label{stab_semi_no_exp}
\|u^k\|_{\G{k}{}}^2+\frac12\Delta t\nu\sum_{n=1}^{k}\|\nabla_\Gamma u^n\|_{\G{n}{}}^2 \le \|u^0\|_{\G{0}{}}^2+\frac12\Delta t\nu\|\nabla_\Gamma u^0\|_{\G{0}{}}^2+t_k\sigma|\bar u^0|^2,\quad \text{for}~k=0,\dots,N.
\end{equation*}
\end{remark}

Now we turn to the fully discrete case. Besides standard technical difficulties of passing from differential equations to algebraic and finite element functional spaces, we need to handle the situation, when the smooth surface $\G{n}{}$ is approximated by a set of piecewise smooth $\G{n}{h}$, $n=0,\dots,N$.

\section{Discretization in space and time} \label{s:FEM}
In order to reduce the repeated use of generic but unspecified constants,  further in the paper we write $x\lesssim y$ to state that the inequality  $x\le c y$ holds for quantities $x,y$ with a constant $c$, which is independent of the mesh parameters $h$, $\Delta t$, time instance $t_n$, and the position of $\Gamma$ over the background mesh. Similar we give sense to $x\gtrsim y$; and $x\simeq y$ will mean that both $x\lesssim y$ and $x\gtrsim y$ hold. However, we shall continue to monitor the explicit dependence of the estimate on the (norms of) normal surface velocity $\wn$.

\ifasslabel{
In the sequel, we will, at times, introduce conditions on discretization parameters. To distinguish these from other assumptions, e.g. assumptions on the PDE problem, we label them with an additional subscript; cf.
\eqref{eq:conddtkappa},
\eqref{cond1},
\eqref{cond4},
\eqref{cond2},
\eqref{cond3},
\eqref{cond5}, below.
}
\subsection{Fully discrete method} \label{s:fulldisc}
Assume a  family of consistent  subdivisions of $\Omega$ into shape regular tetrahedra. This constitutes our background  time-independent triangulations $\{\T_h\}_{h>0}$, with $\max\limits_{T\in\T_h}\mbox{diam}(T) \le h$.
$V_h$ denotes the bulk time-independent finite element space,
\begin{equation} \label{eq:Vh}
V_h:=\{v_h\in C(\Omega)\,:\, v_h|_S\in P_m(S), \forall S\in \mathcal{T}_h\},\quad m\ge1.
\end{equation}
Let  $\phi_h$ be a given continuous piecewise polynomial approximation (with respect to $\T_h$) of the level set function $\phi$ for all $t\in[0,T]$, which satisfies
\begin{equation}\label{phi_h}
\|\phi-\phi_h\|_{\infty,\Omega}+ h \|\nabla(\phi-\phi_h)\|_{\infty,\Omega}\lesssim \,h^{q+1},\quad \forall~t\in[0,T],
\end{equation}
with some $q\ge1$. For this estimate to hold, we assume that the level set function $\phi$ has the smoothness property $\phi \in C^{q+1}(\Omega)$.
Moreover, we assume that $\nabla \phi_h(\bx,t) \neq 0$ in a neighborhood around $\Gamma(t)$, $t\in[0,T]$ and that $\phi_h$ is sufficiently regular in time such that with $\phi_h^n(\bx) = \phi_h(\bx,t_n),~n=0,\dots,N$, there holds
\begin{subequations}\label{phi_hb}
\begin{align}\label{phi_hba}
\| \phi_h^{n-1}-\phi_h^n\|_{\infty,\Omega} & \lesssim\,\Delta t \|\wn\|_{\infty,I_n}, \\
\label{phi_hbb}
\|\nabla \phi_h^{n-1}-\nabla \phi_h^n\|_{\infty,\Omega} & \lesssim\,\Delta t\left( \|\wn\|_{\infty,I_n} +  \|\nabla \wn\|_{\infty,I_n}\right), \text{ for } n=1,\dots,N.
\end{align}
\end{subequations}
We define the discrete surfaces $\G{n}{h}$ approximating $\G{n}{}$ as the zero level of $\phi_h^n$,
\[
\G{n}{h}:=\{\bx\in\rr^3\,:\,\phi_h^n(\bx)=0\}.
\]
$\G{n}{h}$ is an approximation to $\G{n}{}$ with
\begin{equation}\label{eq:dist}
\operatorname{dist}(\G{n}{h},\G{n}{}) = \max_{x\in\G{n}{h}} |\phi^n(\bx)|
= \max_{x\in\G{n}{h}} |\phi^n(\bx) - \phi_h^n(\bx)| \leq \Vert \phi^n - \phi_h^n \Vert_{\infty,\Omega} \lesssim h^{q+1}.
\end{equation}
Furthermore, $\bn_h^n=\nabla\phi_h^n/|\nabla\phi_h^n|$ the normal vector to $\G{n}{h}$ and $ \bn^{n} = \nabla\phi^n$ the extended normal vector to $\G{n}{}$ satisfy for $\bx \in \G{n}{h}$
\begin{equation}\label{eq:normals}
  \vert \bn_h^n(\bx) - \bn^n(\bx) \vert \leq c | \nabla \phi_h^n(\bx) - \nabla \phi^n(\bx) | \lesssim h^q .
\end{equation}
In the following we assume that integrals on $\G{n}{h}$ can be computed accurately. In practice, this is only straightforward for piecewise linear $\phi_h^n$, i.e. $q=1$, while for higher order $\phi_h^n$ more care is needed, cf. Remark \ref{rem:integration} below.

The numerical method provides an extension of a finite element solution to a narrow band around   $\G{n}{h}$, which is defined as the union  of tetrahedra from
\begin{equation*}
{\mathcal{S}}(\G{n}{h}):=\{ S\in\mathcal{T}_h\,:| \phi_h^n(\bx) |\le\delta_n  \text{ for some } \bx \in S\},\quad{\O}(\G{n}{h})=
\text{int}\left({\bigcup}_{S\in{\mathcal{S}}(\G{n}{h})}\overline{S}\right),
\end{equation*}
where
\begin{equation} \label{e:delta}
  \delta_n := c_\delta \| \wn\|_{\infty,I_n}~\Delta t
\end{equation}
is the minimum thickness of the extension layer and $c_\delta\geq 1 $ is an $\O(1)$ mesh-independent constant.
Recall that $\phi_h$ is an approximate distance function, so that
$U_{\delta_n}(\G{n}{h}):=\{\bx\in\mathbb{R}^3\,:\, |\phi_h^n(\bx)|<\delta_n\} \subset \O(\G{n}{h})$ describes a discrete tubular neighborhood to $\G{n}{h}$. We refer to Figure \ref{fig:discrete domains} for a sketch.

Further, we require that $\delta_n \leq c$ for a constant $c$ that only depends on the temporal resolution of the surface dynamics and the roughness of the surface.
We assumed that the surface is smooth at all time so that $\norm{ \kappa }_{\infty,I_n} \lesssim 1$.
Hence, we formulate the following condition on the time step size:
\begin{equation} \label{eq:conddtkappa}\asslabel
  \Delta t \leq c_{\bl{}} (c_\delta \norm{\wn}_{\infty,I_n})^{-1},  ~ n = 1,\dots,N,
\end{equation}
with some $c_{\bl{}}$ sufficiently small, but independent of $h$, $\Delta t$ and $n$.

We also denote by
$\T_{\Gamma}^n$ the set of elements intersected by $\G{n}{h}$,
\begin{equation*}
\T_{\Gamma}^n:=\{ S\in\mathcal{T}_h\,:\, \mathcal{H}_2(S\cap\G{n}{h})>0\}\quad\text{and}~~\quad{\O}_\Gamma(\G{n}{h}):=
\text{int}\left({\bigcup}_{S\in\T_{\Gamma}^n}\overline{S}\right),
\end{equation*}
where $\mathcal{H}_2$ denotes the two-dimensional Hausdorff measure.
We assume that $\O(\G{n}{})$ is such that
\begin{equation}\label{neigh_cond}
  \O(\G{n}{h})\subset\O(\G{n}{})\quad\text{and}\quad \O_\Gamma(\G{n+1}{h})\subset\O(\G{n}{}).
\end{equation}
Note that \eqref{phi_h}, \eqref{phi_hba} and \eqref{e:delta} ensure that
\begin{equation}\label{cond1} \asslabel
  c_\delta \text{ sufficiently large implies }
\G{n}{h} \subset U_{\delta_{n-1}}(\G{n-1}{h}) \text{ and } 
{\O_\Gamma}(\G{n}{h})\subset{\O}(\G{n-1}{h}).
\end{equation}
This condition is the discrete analog of \eqref{ass1} and it is essential for the well-posedness of the method.
\begin{figure}
  \begin{center}
    \begin{tikzpicture}[scale=3.4]
      \input{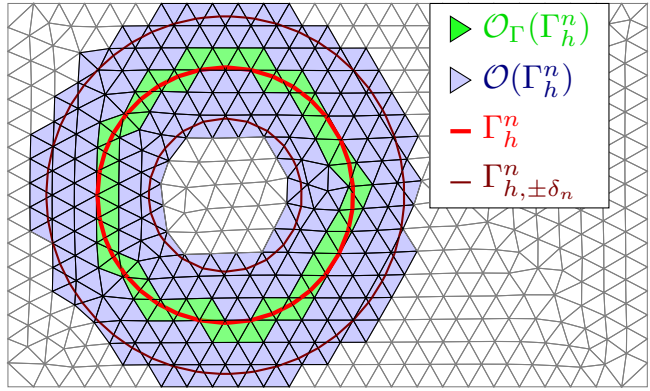}
      \draw[fill=white] (0.9,0.75) -- (1.6,0.75) -- (1.6,-0.05) -- (0.9,-0.05) -- cycle;
      \node[right,scale=1.25] at (1.06,0.65) {\color{green!85!black} $\O_{\Gamma}(\G{n}{h})$};
      \draw[fill=green!85] (1.06,0.65) -- (0.98,0.7) -- (0.98,0.6) -- cycle ;
      \node[right,scale=1.25] at (1.06,0.45) {\color{blue!50!black} $\O(\G{n}{h})$};
      \draw[fill=blue!20] (1.06,0.45) -- (0.98,0.5) -- (0.98,0.4) -- cycle ;
      \node[right,scale=1.25] at (1.06,0.25) {\color{red} $\G{n}{h}$};
      \draw[ultra thick,red] (1.06,0.25) -- (0.98,0.25);
      \node[right,scale=1.25] at (1.06,0.05) {\color{red!50!black} $\G{n}{h,\pm \delta_n}$};
      \draw[thick,red!50!black] (1.06,0.05) -- (0.98,0.05);
    \end{tikzpicture}
  \end{center}
  \vspace*{-0.2cm}
  \caption{Sketch of discrete domains and interfaces.}
  \vspace*{-0.2cm}
  \label{fig:discrete domains}
\end{figure}
We define finite element spaces
\begin{equation}
V_h^n=\{v \in C({\O}(\G{n}{h}))\,:\, v\in P_m(S), \forall S\in {\mathcal{S}}(\G{n}{h})\},\quad m\ge1.
\end{equation}
These spaces are the restrictions of the time-independent bulk space $V_h$ on all tetrahedra from ${\mathcal{S}}(\G{n}{h})$.

The numerical method is based on the semi-discrete formulation \eqref{e:conttraceFEM1} and identity \eqref{e:intpartc}. It reads:
For a given $u_h^0 \in V_h^0$ find  $u_h^n\in V_h^n$, $n=1,\dots,N$, satisfying
\begin{multline}\label{e:traceFEM1}
\int_{\G{n}{h}}\left\{\frac{u_h^n-u_h^{n-1}}{\Delta t} v_h + \frac12 (\wte\cdot\nabla_{\G{}{h}} u^n_h v_h - \wte\cdot\nabla_{\G{}{h}} v_h u_h^n) +\Div_{\G{}{h}}(\bw^e - \frac12 \wte) u_h^n v_h\, \right\}ds_h\\ +\nu\int_{\G{n}{h}}\nabla_{\G{}{h}}u^n_h\cdot\nabla_{\G{}{h}}
v_h\, ds_h+\stab \int_{\O(\G{n}{h})}(\bn_h^n\cdot\nabla u_h^n) (\bn_h^n\cdot\nabla v_h) d\bx =0,
\end{multline}
for all $v_h\in V_h^n$. Here $\bn_h=\nabla\phi_h^n/|\nabla\phi_h^n|$ in $\O(\G{n}{h})$, $\stab>0$ is a  parameter, $\bw^e(\bx)=\bw(\bp^n(\bx))$ is lifted data on $\G{n}{h}$ from $\G{n}{}$. The first term in \eqref{e:traceFEM1} is well-defined thanks to condition \eqref{cond1}.
As we discussed in the introduction,   the term $\stab\int_{\O(\G{n}{h})}(\bn_h\cdot\nabla u_h^n) (\bn_h\cdot\nabla v_h) d\bx$ plays several roles.
We shall see that for $\stab$ not too small, it ensures the form on the left hand side to be elliptic on $V_h^n$, rather than only on the space of traces. Therefore, on each time step we obtain a FE solution defined in  $\O(\G{n}{h})$ (this can be seen as an implicit extension procedure).
Furthermore, it stabilizes the problem algebraically, i.e. the resulting systems of algebraic equations are well-conditioned, see section~\ref{s:algebra}.

\begin{remark}[Numerical integration]\label{rem:integration}\rm
  The discrete surface $\G{n}{h}$ is described only implicitly via the zero-level of a discrete level set function. In general, it is a non-trivial task to obtain a parametrized representation of $\G{n}{h}$ which would allow for a straightforward application of numerical quadrature rules. On simplices and in the low order case where $\phi_h^n$ is a piecewise linear approximation of the level set function $\phi^n$ ($q=1$ in \eqref{phi_h}), an explicit reconstruction of $\G{n}{h}$ is easily available, cf. e.g. \cite{mayer2009interface}. On hyperrectangles, a low order case where the accuracy of the implicit representation is $q=1$ in \eqref{phi_h} can be dealt with a marching cube \cite{lorensen1987marching} approximation. However, the higher order case $q>1$ is more involved and requires special approaches for the construction of quadrature rules. We do not extend this discussion here but refer to the literature instead, cf. \cite{fries2015,lehrenfeld2015cmame,muller2013highly,olshanskii2016numerical,saye2015hoquad,sudhakar2013quadrature}.
\end{remark}

\section{Analysis of the fully discrete method} \label{s:Analysis}
In this section we carry out the numerical analysis of the fully discrete method.
Before we can perform the stability and consistency analysis (subsections
\ref{s_stab} and \ref{s:consistency}) to derive a priori error bounds in subsection \ref{sec:aprioriest}, we require two results that are technically more involved.
The first one gives control in the $L^2$ norm in a narrow band volume based on
a combination of the $L^2$ norm on the surface and the normal gradient in the volume that is provided by the stabilization. The result is treated in subsection \ref{sec:volumecontrol} and is a generalization of a result from \cite{burman2016cut} which is also found in \cite[Lemma 7.6]{grande2016analysis}.
The second result provides bounds for the evaluation of a lifting of a function that is naturally defined on $\G{n-1}{h}$ to $\G{n}{h}$. The result is the counterpart to Lemma~\ref{l_est1} on the discrete level and is treated in subsection \ref{sec:stabshift}.

\subsection{Volume control by the normal diffusion stabilization} \label{sec:volumecontrol}
Before we can state and prove the lemma on the normal diffusion stabilization we need some preparation.

We denote the limiting level sets of $\phi_h^n$ with $|\phi_h^n|=\delta_n$ as $\G{n}{h,\pm\delta_n} := \{ \phi_h^n(\bx) = \pm \delta_n\}$. The corresponding set of elements cut by $\G{n}{h,\pm\delta_n}$ is denoted by $\O_\Gamma(\G{n}{h,\pm\delta_n})$, cf. Figure \ref{fig:discrete domains}.
Corresponding to $\G{n}{h,-\delta_n}$ and $\G{n}{h,\delta_n}$ we recall the neighborhood $U_{\delta_n}(\G{n}{h})=\{\bx\in\mathbb{R}^3\,:\, |\phi_h^n(\bx)|<\delta_n\} \subset \O(\G{n}{h})$.
Now, we introduce a mapping $\Phi: \O(\G{n}{h}) \rightarrow \O(\G{n}{})$ that allows to map from approximated level sets to exact level sets. For $\bx \in \O(\G{n}{h})$ we define $\Phi(\bx):=\bx+(\phi_h^n(\bx)-\phi^n(\bx))\bn^n(\bx)$ with $\bn^n(\bx) = \nabla \phi^n (\bx) = \bn^n(\bp^n(\bx))$ which has
 $\phi^n \circ \Phi = \phi_h^n$ in $\O(\G{n}{h})$, i.e.
\begin{equation} \label{eq:map}
  \phi^n(\bx+(\phi_h^n(\bx)-\phi^n(\bx))\bn^n(\bx))=\phi^n(\bx) + \phi_h^n(\bx)-\phi^n(\bx) = \phi_h^n(\bx) \quad \forall~\bx \in \O(\G{n}{h}).
\end{equation}
\vspace*{-0.2cm}
\begin{lemma}
The mapping $\Phi$ is well-defined, continuous and $\Phi|_S \in C^{q+1}(S)$ for any $S\in\mathcal{S}(\G{n}{h})$. There hold
  $\Phi(\G{n}{h}) = \G{n}{}$ and
    \begin{align} \label{eq:estPhi}
    \Vert \Phi - \operatorname{id} \Vert_{\infty,\O(\G{n}{h})}
    \lesssim h^{q+1}, \quad
    \Vert D \Phi - I \Vert_{\infty,\O(\G{n}{h})}
    \lesssim h^{q}.
    \end{align}
    Further, for $h$ sufficiently small $\Phi$ is invertible.
\end{lemma}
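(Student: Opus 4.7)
The plan is to verify the four claims in turn; the regularity and mapping properties follow directly from the signed-distance assumption and the estimates on $\phi-\phi_h$, while injectivity requires a tubular-neighborhood argument that is the main technical point.

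\textbf{Regularity and well-definedness.} Under the signed-distance assumption on $\phi$ (Section \ref{s:stab:semi-disc}), the extended normal field $\bn^n = \nabla\phi^n$ is defined throughout $\O(\G{n}{})$ with $|\bn^n| = 1$, and inherits $C^q$-regularity from $\phi\in C^{q+1}(\Omega)$. On each element $S \in \mathcal{S}(\G{n}{h})$, the restriction $\phi_h^n|_S$ is polynomial and $\phi^n|_S$ is as smooth as $\phi$, so $\Phi|_S = \operatorname{id} + (\phi_h^n - \phi^n)\bn^n$ has the claimed elementwise regularity, and $\Phi$ is globally continuous because $\phi_h^n$ is. Well-definedness, i.e.\ $\Phi(\O(\G{n}{h}))\subset \O(\G{n}{})$, follows from \eqref{neigh_cond} together with the displacement estimate $|\Phi(\bx)-\bx| \le \|\phi_h^n-\phi^n\|_{\infty,\Omega}\lesssim h^{q+1}$ (from \eqref{phi_h}), for $h$ sufficiently small.

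\textbf{Image of $\G{n}{h}$ and the $\O(h^{q+1})$, $\O(h^q)$ bounds.} Because $\phi^n$ is the signed distance and $\bn^n(\bx)$ is constant along each normal line in the tubular neighborhood, one has $\phi^n(\bx + t\bn^n(\bx)) = \phi^n(\bx) + t$ for $|t|$ small. Applying this with $t = \phi_h^n(\bx) - \phi^n(\bx)$ yields identity \eqref{eq:map}, so $\bx\in\G{n}{h}$ iff $\Phi(\bx)\in\G{n}{}$. The pointwise estimates in \eqref{eq:estPhi} follow by direct differentiation: $\Phi-\operatorname{id}=(\phi_h^n-\phi^n)\bn^n$ gives the first via \eqref{phi_h}, while
\[
D\Phi - I \;=\; \bn^n \otimes \nabla(\phi_h^n-\phi^n) + (\phi_h^n - \phi^n)\, D\bn^n
\]
together with \eqref{phi_h} and the boundedness of $D\bn^n$ (smoothness of $\Gamma(t)$) yields the second.

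\textbf{Invertibility (main obstacle).} The bound $\|D\Phi-I\|_\infty\lesssim h^q$ alone gives only local injectivity via the inverse function theorem; a global argument is needed because $\O(\G{n}{h})$ need not be convex. The key observation is that, since $\bn^n(\bx)=\bn^n(\bp^n(\bx))$ in the tubular neighborhood of $\G{n}{}$, the map rewrites as
\[
\Phi(\bx) \;=\; \bp^n(\bx) + \phi_h^n(\bx)\,\bn^n(\bp^n(\bx)),
\]
which is already in the tubular-neighborhood normal parametrization of $\O(\G{n}{})$. Hence $\Phi(\bx_1)=\Phi(\bx_2)$ forces $\bp^n(\bx_1)=\bp^n(\bx_2) =: \bp$ and $\phi_h^n(\bx_1)=\phi_h^n(\bx_2)$, so $\bx_1,\bx_2$ lie on the same normal line through $\bp$. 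Parametrizing this line by $\bx(s) = \bp + s\,\bn^n(\bp)$, one computes
\[
\tfrac{d}{ds}\phi_h^n(\bx(s)) \;=\; \nabla\phi_h^n(\bx(s))\cdot \bn^n(\bp) \;=\; 1 + \O(h^q),
\]
using \eqref{phi_h} and $\nabla\phi^n\cdot\bn^n = |\nabla\phi^n|^2 = 1$. Thus $\phi_h^n$ is strictly monotone along normal lines for $h$ sufficiently small, yielding $\bx_1 = \bx_2$ and hence global injectivity of $\Phi$; combined with invertibility of $D\Phi$ this makes $\Phi$ a piecewise smooth bijection onto its image.
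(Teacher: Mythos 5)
Your proposal is correct, and for the claims the paper actually argues it follows the same route: the identity $\phi^n\circ\Phi=\phi_h^n$ (i.e.\ \eqref{eq:map}) gives $\Phi(\G{n}{h})\subset\G{n}{}$, elementwise smoothness is ``by construction'', and both bounds in \eqref{eq:estPhi} drop out of \eqref{phi_h} together with $|\bn^n|=1$ and the boundedness of $D\bn^n=D^2\phi^n$. Where you genuinely go beyond the paper is the invertibility: the paper's proof simply does not address it, the implicit argument being that $\Vert D\Phi-I\Vert_\infty\lesssim h^q<1$ makes $\Phi$ a small perturbation of the identity. As you rightly point out, that bound alone only yields local injectivity on the non-convex set $\O(\G{n}{h})$; your rewriting $\Phi(\bx)=\bp^n(\bx)+\phi_h^n(\bx)\,\bn^n(\bp^n(\bx))$ in tubular coordinates, followed by strict monotonicity of $\phi_h^n$ along normal lines ($\frac{d}{ds}\phi_h^n(\bx(s))=1+\O(h^q)$ a.e., since $\phi_h^n$ is only piecewise smooth), is a clean global argument and fills this gap; it does need the minor observation that the relevant normal segments stay inside the tubular neighborhood where $\phi^n$ is the signed distance, which your well-definedness step provides. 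One small caveat, shared with the paper: the claimed $C^{q+1}(S)$ regularity of $\Phi|_S$ strictly requires $\bn^n=\nabla\phi^n\in C^{q+1}$, i.e.\ a bit more smoothness of $\phi$ than the $C^{q+1}(\Omega)$ assumed for \eqref{phi_h}; since the surface is assumed smooth this is harmless, but as written your (and the paper's) construction only delivers $C^q$ from $\phi\in C^{q+1}$.
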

\begin{proof}
  The smoothness is obtained by construction. To see $\Phi(\G{n}{h}) = \G{n}{}$ we recall that $\phi^n \circ \Phi = \phi_h^n$ holds also for $\bx \in \G{n}{h} = \{ \phi_h^n= 0\}$ which implies that $\Phi(\bx) \in \G{n}{} =  \{ \phi^n = 0\}$.
Finally,  \eqref{eq:estPhi} follows from \eqref{phi_h}.
\end{proof}

We use this mapping to map from the discrete surface to the exact one. We introduce the following notation. For $u \in V_h^n$ we define $\tilde{u} := u \circ \Phi^{-1}$, $\tilde{\O}(\G{n}{h,\ast}) := \Phi(\O(\G{n}{h,\ast}))$, $\tilde{\O}_\Gamma(\G{n}{h,\ast}) := \Phi(\O_\Gamma(\G{n}{h,\ast}))$
for $\G{n}{h,\ast} \in \{ \G{n}{h,-\delta_n},\G{n}{h},\G{n}{h,\delta_n}\}$,
$\G{}{\pm \delta_n} := \Phi(\G{n}{h,\pm \delta_n}) = \{ \phi(\bx) = \pm \delta_n
\}$ and
$U_{\delta_n}(\G{n}{}):=\{\bx\in\mathbb{R}^3\,:\, |\phi^n(\bx)|<\delta_n\} = \Phi(U_{\delta_n}(\G{n}{h}))$.
Due to \eqref{eq:estPhi} we have that
\begin{equation}\label{eq:equivvol}
  \Vert \tilde{u} \Vert_{\tilde{\O}(\G{n}{h})}^2 = \int_{\tilde{\O}(\G{n}{h})} \tilde{u}^2~d \bx = \int_{\O(\G{n}{h})} \underbrace{\operatorname{det}(D\Phi)}_{\simeq 1} u^2~d \bx \simeq \Vert u \Vert_{\O(\G{n}{h})}^2
\end{equation}
and similarly one easily shows (see, e.g., \cite[Lemma 3.7]{LR_ARXIV_2016a})
\begin{equation}\label{eq:equivsurf}
  \Vert u \Vert_{\G{n}{h}}^2 \simeq \Vert \tilde{u} \Vert_{\G{n}{}}^2.
\end{equation}

\begin{lemma}\label{lem:onelayer}
On a quasi-uniform family of triangulations, for sufficiently small $h$, for $\tilde{u} \in V_h^n \circ \Phi^{-1}$ there holds for $\G{n}{h,\ast} \in \{ \G{n}{h,-\delta_n},\G{n}{h},\G{n}{h,\delta_n} \}$ with $\G{n}{\ast} = \Phi(\G{n}{h,\ast})$
\begin{equation}\label{eq:estonelayer}
  \Vert \tilde{u} \Vert_{\tilde{\O}_\Gamma(\G{n}{h,\ast})}^2
  \lesssim h \Vert \tilde{u} \Vert_{\G{n}{\ast}}^2
  + h^2 \Vert \bn^n \cdot \nabla \tilde{u} \Vert_{\tilde{\O}_\Gamma(\G{n}{h,\ast})}^2.
\end{equation}
\end{lemma}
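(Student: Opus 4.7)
The strategy is a normal-direction fundamental theorem of calculus estimate, combined with a tubular-neighborhood change of variables. Since $\phi^n$ is the signed distance function to $\G{n}{}$, and $\G{n}{\ast}=\Phi(\G{n}{h,\ast})$ is, by the defining property \eqref{eq:map}, a level set of $\phi^n$ (namely $\{\phi^n=c\}$ for $c\in\{-\delta_n,0,\delta_n\}$), the surfaces $\G{n}{\ast}$ are smooth parallel surfaces whose unit normal field is $\bn^n=\nabla\phi^n$ with $|\bn^n|=1$ in a neighborhood. Let $\bp^n_\ast$ denote the closest-point projection onto $\G{n}{\ast}$.

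First, I would verify that $\tilde{\O}_\Gamma(\G{n}{h,\ast})$ lies in an $O(h)$ tubular neighborhood of $\G{n}{\ast}$ in which $\bp^n_\ast$ is smooth and well-defined. Each element $S$ cut by $\G{n}{h,\ast}$ has diameter at most $h$, so $S$ sits in an $h$-neighborhood of $\G{n}{h,\ast}$. Because $\Phi$ moves points by at most $O(h^{q+1})$ according to \eqref{eq:estPhi}, the image $\tilde{\O}_\Gamma(\G{n}{h,\ast})$ sits in an $O(h)$-neighborhood of $\G{n}{\ast}$ for $h$ small enough, well inside the tubular-neighborhood radius of the smooth surface $\G{n}{\ast}$.

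Next, for every $\bx\in\tilde{\O}_\Gamma(\G{n}{h,\ast})$ I write $\bx=\bp^n_\ast(\bx)+r(\bx)\bn^n(\bp^n_\ast(\bx))$ with $|r(\bx)|\lesssim h$. For $\tilde u\in V_h^n\circ\Phi^{-1}$, which is piecewise smooth across element faces but whose restriction to each transformed element is sufficiently regular to integrate along a straight segment, the fundamental theorem of calculus gives
\[
\tilde u(\bx)=\tilde u(\bp^n_\ast(\bx))+\int_0^{r(\bx)}(\bn^n\cdot\nabla\tilde u)(\bp^n_\ast(\bx)+t\,\bn^n)\,dt.
\]
Squaring, using $(a+b)^2\le 2a^2+2b^2$ and Cauchy--Schwarz on the integral, produces
\[
\tilde u(\bx)^2\le 2\,\tilde u(\bp^n_\ast(\bx))^2+2|r(\bx)|\int_0^{|r(\bx)|}\bigl|\bn^n\cdot\nabla\tilde u\bigr|^2(\bp^n_\ast(\bx)+t\,\bn^n)\,dt.
\]
Finally I would integrate the pointwise inequality over $\tilde{\O}_\Gamma(\G{n}{h,\ast})$, using the tubular-neighborhood change of variables $\bx=\by+r\,\bn^n(\by)$, $\by\in\G{n}{\ast}$. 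Because $|\nabla\phi^n|=1$, the Jacobian equals $\prod_i(1-r\kappa_i(\by))=1+O(h)$ since $|r|\lesssim h$ and the principal curvatures of $\G{n}{\ast}$ are bounded. Bounding the $r$-integral by its length $O(h)$ for the first term, and enlarging the inner integration domain to $\tilde{\O}_\Gamma(\G{n}{h,\ast})$ for the second (after Fubini), yields exactly \eqref{eq:estonelayer}.

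The main technical obstacle lies in the change-of-variables step: one must identify the image of $\tilde{\O}_\Gamma(\G{n}{h,\ast})$ under $\by\mapsto(\bp^n_\ast(\bx),r(\bx))$ with a subset of $\G{n}{\ast}\times[-ch,ch]$ for some constant $c>0$, and show that the set of distances $r$ over which one integrates, for each fixed $\by$, has length $\lesssim h$. This uses shape regularity of $\T_h$ (to control the diameter of mapped tetrahedra in the normal direction) together with the near-identity bound $\|D\Phi-I\|_{\infty}\lesssim h^q$ from \eqref{eq:estPhi}, which guarantees that the Jacobian of the composite map is uniformly close to one.
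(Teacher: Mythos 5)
Your overall strategy (tubular change of variables / co-area formula plus a fundamental-theorem-of-calculus estimate along the normal direction) is exactly the idea the paper attributes to the cited reference, so the approach itself is the right one. However, there is a genuine gap at the decisive step, the one you describe as ``enlarging the inner integration domain to $\tilde{\O}_\Gamma(\G{n}{h,\ast})$ after Fubini''. That step is only an enlargement if, for every $\bx\in\tilde{\O}_\Gamma(\G{n}{h,\ast})$, the whole normal segment $\{\bp^n_\ast(\bx)+t\,\bn^n:\ t\ \text{between}\ 0\ \text{and}\ r(\bx)\}$ lies inside $\tilde{\O}_\Gamma(\G{n}{h,\ast})$. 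This is not automatic and is false for general cut configurations: the union of cut elements is not ``normally convex''. The surface $\G{n}{\ast}$ may clip only a corner of the element containing $\bx$, and the normal segment from $\bx$ down to $\G{n}{\ast}$ can then pass through a neighbouring \emph{uncut} element sitting between $\bx$ and the surface. In that case your argument only yields \eqref{eq:estonelayer} with $\Vert \bn^n\cdot\nabla\tilde{u}\Vert$ taken over the union of all such normal segments, i.e.\ over the strip plus parts of an extra layer of elements, not over $\tilde{\O}_\Gamma(\G{n}{h,\ast})$ as claimed. For the boundary cases $\G{n}{h,\pm\delta_n}$ the problem is even more basic: the segment may leave $\O(\G{n}{h})$ altogether, where $\tilde u$ is not defined, so even the pointwise identity you start from needs justification there.

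Dealing with precisely this issue is the technical content of the proof the paper defers to (co-area formula \emph{combined with} careful elementwise estimates along normal paths crossing the interfaces, using that $\G{n}{\ast}$ intersects every element of the strip, together with finite-element norm-equivalence arguments of the type in Lemma \ref{SQ}). So either you must prove that, for $h$ small and under the mesh assumptions, the normal fibers you integrate over do stay inside $\tilde{\O}_\Gamma(\G{n}{h,\ast})$ (which requires an argument you have not given and which I do not believe holds without modification), or you must restructure the estimate so that the contributions from outside the strip are absorbed, e.g.\ by working patchwise around each cut element and using inverse/norm-equivalence estimates for the mapped polynomials. As written, the proof establishes a weaker statement than \eqref{eq:estonelayer}.
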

\begin{proof}
  The technical proof is given in \cite[section 7.2]{grande2016analysis}. The main idea is the application of the co-area formula combined with estimates along paths which are normal to the interfaces $\G{n}{\ast}$ and cross the interfaces $\G{n}{\ast}$. Below in Theorem \ref{lemcrucial} we apply similar techniques.
\end{proof}

\begin{theorem} \label{lemcrucial}
  For $h$ sufficiently small and $\Delta t$ so that \eqref{eq:conddtkappa} is fulfilled and
$\delta_n$ as in \eqref{e:delta}, the following uniform with respect to $\delta_n$, $h$ and $n$ estimates holds for any $u \in V_h^n$
\begin{subequations}
  \begin{align}\label{fund1a}
\|u\|_{U_{\delta_n}(\G{n}{h})}^2 & \lesssim & \hspace*{-1.5cm}\delta_n \|u\|_{\G{n}{h}}^2 &+& \hspace*{-1.5cm}\delta_n^2 \|\bn_h^n \cdot \nabla u\|_{\O(\G{n}{h})}^2,
 \\
    \label{fund1}
\|u\|_{\O(\G{n}{h})}^2 & \lesssim & \hspace*{-1.5cm}(\delta_n + h) \|u\|_{\G{n}{h}}^2 &+& \hspace*{-1.5cm}(\delta_n + h)^2 \|\bn_h^n \cdot \nabla u\|_{\O(\G{n}{h})}^2.
\end{align}
\end{subequations}
\end{theorem}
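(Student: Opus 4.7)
My plan is to first transfer the estimates to the exact (smooth) level sets of $\phi^n$ via the map $\Phi$, exploit the signed-distance property there through the co-area formula and the fundamental theorem of calculus along normal rays, and finally transfer back to the discrete side using \eqref{eq:equivvol}, \eqref{eq:equivsurf}, \eqref{eq:estPhi} and \eqref{eq:normals}. Throughout, I write $\tilde u := u\circ\Phi^{-1}$; since $\phi^n$ is a signed distance function in $\O(\G{n}{})$, its level sets $\G{n,s}{} := \{\phi^n = s\}$ are smooth for $|s|\le\delta_n \le c$ with $c$ small (ensured by \eqref{eq:conddtkappa}), and the closest-point projection $\bp^n$ is well-defined there, with the map $(\bp,s)\mapsto \bp + s\bn^n(\bp)$ being a bi-Lipschitz diffeomorphism from $\G{n}{}\times[-\delta_n,\delta_n]$ onto $U_{\delta_n}(\G{n}{})$.

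For \eqref{fund1a}, I would apply the co-area formula to write $\|\tilde u\|_{U_{\delta_n}(\G{n}{})}^2 = \int_{-\delta_n}^{\delta_n}\!\int_{\G{n,s}{}} \tilde u^2\,d\bs_s\,ds$ (using $|\nabla\phi^n|=1$), and then parametrise each level set over $\G{n}{}$ along normals. For $\bp\in\G{n}{}$ and $s\in[-\delta_n,\delta_n]$, the fundamental theorem of calculus gives
\[
\tilde u(\bp+s\bn^n(\bp))^2 \;\le\; 2\tilde u(\bp)^2 + 2|s|\int_0^{|s|} \bigl(\bn^n\cdot\nabla\tilde u\bigr)^2(\bp+\tau\bn^n(\bp))\,d\tau.
\]
Integrating in $\bp$ and $s$, and bounding Jacobian factors by $1$ up to a uniform constant (since the principal curvatures are bounded and $\delta_n$ is small), yields
\[
\|\tilde u\|_{U_{\delta_n}(\G{n}{})}^2 \;\lesssim\; \delta_n \|\tilde u\|_{\G{n}{}}^2 + \delta_n^2 \|\bn^n\cdot\nabla\tilde u\|_{U_{\delta_n}(\G{n}{})}^2.
\]
For \eqref{fund1}, the set $\O(\G{n}{h})$ consists of $U_{\delta_n}(\G{n}{h})$ together with at most one extra layer of tetrahedra, namely those in $\O_\Gamma(\G{n}{h,\pm\delta_n})$. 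On this layer I would invoke Lemma \ref{lem:onelayer} to obtain $\|\tilde u\|_{\tilde\O_\Gamma(\G{n}{h,\pm\delta_n})}^2 \lesssim h\|\tilde u\|_{\G{n}{\pm\delta_n}}^2 + h^2\|\bn^n\cdot\nabla\tilde u\|^2$, and then bound $\|\tilde u\|_{\G{n}{\pm\delta_n}}^2$ by $\|\tilde u\|_{\G{n}{}}^2 + \delta_n\|\bn^n\cdot\nabla\tilde u\|_{U_{\delta_n}(\G{n}{})}^2$ via the same FTC-along-normal argument (now with fixed offset $s=\pm\delta_n$). Combining the three pieces and using $(\delta_n^2 + h\delta_n + h^2)\lesssim(\delta_n+h)^2$ delivers \eqref{fund1}.

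The main obstacle is transferring the resulting inequalities from the exact normal derivative $\bn^n\cdot\nabla\tilde u$ back to the discrete one $\bn_h^n\cdot\nabla u$ which appears on the right-hand sides of \eqref{fund1a}--\eqref{fund1}. Writing $\nabla\tilde u = (D\Phi)^{-T}\nabla u\circ\Phi^{-1}$ and using \eqref{eq:estPhi} gives $\bn^n\cdot\nabla\tilde u = (\bn^n\cdot\nabla u)\circ\Phi^{-1} + O(h^q)(\nabla u)\circ\Phi^{-1}$; further, by \eqref{eq:normals}, $\bn^n\cdot\nabla u = \bn_h^n\cdot\nabla u + O(h^q)\nabla u$. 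Thus
\[
\|\bn^n\cdot\nabla\tilde u\|_{\tilde\O(\G{n}{h})}^2 \;\lesssim\; \|\bn_h^n\cdot\nabla u\|_{\O(\G{n}{h})}^2 + h^{2q}\|\nabla u\|_{\O(\G{n}{h})}^2,
\]
and the problematic term is controlled by an element-wise inverse estimate $\|\nabla u\|_{\O(\G{n}{h})}\lesssim h^{-1}\|u\|_{\O(\G{n}{h})}$, which gives an $h^{2q-2}\|u\|^2$ contribution. Since $q\ge 1$, this term can be absorbed into the left-hand side of \eqref{fund1} for $h$ sufficiently small (kick-back argument), and similarly for \eqref{fund1a} once one notices that $U_{\delta_n}(\G{n}{h})\subset \O(\G{n}{h})$ so the same absorption applies. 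Finally \eqref{eq:equivvol} and \eqref{eq:equivsurf} convert all tilde-norms back to norms on the discrete objects, completing both estimates with constants independent of $\delta_n$, $h$ and $n$.
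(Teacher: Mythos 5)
Your route is essentially the paper's: map with $\Phi$ to the exact level sets, use the co-area formula and the fundamental theorem of calculus along normal rays for the tube estimate, treat the outer element layers at $\pm\delta_n$ with Lemma \ref{lem:onelayer} plus a trace-along-normals bound, and finally replace $\bn^n\cdot\nabla\tilde u$ by $\bn_h^n\cdot\nabla u$ at the price of an $h^{2q}\|\nabla u\|^2\lesssim h^{2q-2}\|u\|^2$ term. Your small variants (Cauchy--Schwarz after the fundamental theorem of calculus instead of the paper's Young-inequality kick-back, and bounding $\|\tilde u\|^2_{\G{n}{\pm\delta_n}}\lesssim\|\tilde u\|^2_{\G{n}{}}+\delta_n\|\bn^n\cdot\nabla\tilde u\|^2_{U_{\delta_n}(\G{n}{})}$, which contributes $h\delta_n\le(\delta_n+h)^2$) are harmless.

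The one slip is your final remark about \eqref{fund1a}: after the inverse inequality the geometric-error term is $\delta_n^2 h^{2q-2}\|u\|_{\O(\G{n}{h})}^2$, a norm over the \emph{whole} neighborhood, so it cannot be absorbed into the left-hand side $\|u\|_{U_{\delta_n}(\G{n}{h})}^2$; the inclusion $U_{\delta_n}(\G{n}{h})\subset\O(\G{n}{h})$ goes the wrong way for such a kick-back. The repair is immediate and is exactly what the paper does: since \eqref{fund1} is already proved, insert it to bound $\|u\|_{\O(\G{n}{h})}^2$ and check that the resulting prefactors satisfy $\delta_n^2 h^{2q-2}(\delta_n+h)\lesssim\delta_n$ and $\delta_n^2 h^{2q-2}(\delta_n+h)^2\lesssim\delta_n^2$ for $q\ge1$ and $h$, $\delta_n\lesssim 1$. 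With that correction your argument matches the paper's proof.
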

\begin{proof}[Sketch of the proof]
  We only sketch the proof here. A complete proof is given in
  \ifarxiv
  the appendix.
  \else
  \cite[Appendix]{arXiv}.
  \fi
Based on the co-area formula on the smooth mapped domains, for $\tilde{u} = u \circ \Phi^{-1}$ there holds
\begin{equation*}
    \Vert \tilde{u} \Vert_{U_{\delta_n}(\Gamma^n)}^2
 \lesssim \delta_n \Vert \tilde{u} \Vert_{\Gamma^n}^2 + \delta_n^2   \Vert \bn^n \cdot \nabla \tilde{u} \Vert_{U_{\delta_n}(\Gamma^n)}^2.
\end{equation*}
Combining this estimate with the result of Lemma \ref{lem:onelayer} for $\G{n}{\pm \delta_n}$ and the overlapping decomposition $\tilde{\O}_\Gamma(\G{n}{h,\pm \delta_n}) \cup U_{\delta_n}(\Gamma^n) = \tilde{\O}(\G{n}{h})$ we arrive at
  \begin{equation*}
    \Vert \tilde{u} \Vert_{\tilde{\O}(\G{n}{h})}^2
 \lesssim  (h+\delta_n)  \Vert \tilde{u} \Vert_{\Gamma^n}^2 +  (h + \delta_n)^2   \Vert \bn^n \cdot \nabla \tilde{u} \Vert_{\tilde{\O}(\G{n}{h})}^2.
\end{equation*}
Finally, incorporating geometrical errors for the normal, $\bn^n \neq \bn_h^n$, and applying the equivalence of norms on the mapped domains yields the result.
\end{proof}
\subsection{Stability of shift operations and the normal diffusion stabilization} \label{sec:stabshift}
To control the effect of the geometric error, we require the following mild restriction in our analysis,
\begin{equation}\label{cond4}\asslabel
  h^{2q} \leq c_{h} \Delta t,
\end{equation}
for some $c_{h}$ independent of $\Delta t$ and $h$.
We recall that $q\ge 1$ is defined in \eqref{phi_h}.

Now, we turn our attention to a discrete analogue of Lemma~\ref{l_est1} which we prove in Lemma \ref{lem2a}.
\begin{lemma} \label{lem2a} For $v \in L^2(\G{n-1}{})$, $\Delta t$ so that \eqref{eq:conddtkappa} is fulfilled 
with sufficiently small $c_{\bl{}}$
and $h$ such that \eqref{cond4} is fulfilled, it holds
 \begin{equation} \label{fund2a}
   \|v\circ\bp^{n-1}\|_{\G{n}{h}}^2
   \le
   (1+c_{\ref{lem2a}} \Delta t) \ \|v\circ\bp^{n-1}\|_{\G{n-1}{h}}^2
\end{equation}
with some $c_{\ref{lem2a}}$ independent of $h$, $\Delta t$, and $n$.
\end{lemma}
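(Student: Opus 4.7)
The strategy is to mirror the proof of Lemma~\ref{l_est1} on the discrete surfaces, leveraging that $v \circ \bp^{n-1}$ is by construction constant along normals to $\G{n-1}{}$. This invariance allows both integrals in \eqref{fund2a} to be rewritten as integrals over $\G{n-1}{}$ through the closest-point projection.

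First, since $\phi^{n-1}$ is a signed distance function and both $\G{n-1}{h}, \G{n}{h} \subset \O(\G{n-1}{})$ by \eqref{neigh_cond}--\eqref{cond1}, the map $\bp^{n-1} : \G{\ast}{h} \to \G{n-1}{}$ is a diffeomorphism for $\ast \in \{n-1, n\}$ and $h$ sufficiently small. Adapting the derivation of \eqref{aux2a} from \cite[Proposition~2.1]{Demlow06} (applied with $\Gamma = \G{n-1}{}$ and $\Gamma_h = \G{\ast}{h}$) gives, for $\bx \in \G{\ast}{h}$,
\begin{equation*}
  \mu_h^\ast(\bx)\, d\bs_h^\ast(\bx) = d\bs^{n-1}(\bp^{n-1}(\bx)), \quad
  \mu_h^\ast(\bx) = (1-\phi^{n-1}(\bx)\kappa_1)(1-\phi^{n-1}(\bx)\kappa_2)\, \bn_h^\ast(\bx) \cdot \bn^{n-1}(\bp^{n-1}(\bx)),
\end{equation*}
with $\kappa_i$ as in \eqref{eq:kappa}. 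After the change of variables $\by = \bp^{n-1}(\bx)$, the desired inequality \eqref{fund2a} reduces to establishing the pointwise ratio bound $\mu_h^{n-1}/\mu_h^n \le 1 + c_{\ref{lem2a}}\Delta t$ on $\G{n-1}{}$.

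To obtain this ratio bound, I would treat each factor of $\mu_h^\ast$ in turn. For the normal part, \eqref{eq:normals} yields $\bn_h^{n-1}\cdot\bn^{n-1} = 1 + O(h^{2q})$, and combining \eqref{phi_hbb} with the squared-difference argument used in \eqref{aux3a} produces the time increment $\bn_h^n\cdot\bn^{n-1} - \bn_h^{n-1}\cdot\bn^{n-1} = O(\Delta t)$. For the curvature factors, $|\phi^{n-1}|$ is bounded by $h^{q+1}$ on $\G{n-1}{h}$ (using \eqref{eq:dist}) and by $\|\wn\|_{\infty, I_n}\Delta t + O(h^{q+1})$ on $\G{n}{h}$ (using \eqref{aux2} applied to $\G{n}{}$ together with \eqref{eq:dist}); for two preimages $\bx \in \G{n}{h}$, $\bx' \in \G{n-1}{h}$ with $\bp^{n-1}(\bx) = \bp^{n-1}(\bx') = \by$, their difference $\phi^{n-1}(\bx) - \phi^{n-1}(\bx')$ is of the same magnitude by \eqref{phi_h}--\eqref{phi_hb}. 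Combining these estimates gives $\mu_h^{n-1}/\mu_h^n = 1 + O(\Delta t) + O(h^{q+1}) + O(h^{2q})$, and condition \eqref{cond4} then absorbs the geometric-error terms into $c_{\ref{lem2a}}\Delta t$. The main obstacle is achieving this uniform pointwise control of the Jacobian ratio: one must carefully track the combined effect of the $O(\Delta t)$ motion of the exact interface, the $O(\Delta t)$ rotation of the discrete normal, and the purely geometric $O(h^{q+1})$ approximation error, making full use of the smoothness assumptions \eqref{phi_h}--\eqref{phi_hb} and the mesh/time-step compatibility condition \eqref{cond4}.
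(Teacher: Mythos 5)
Your route is structurally the same as the paper's: lift both discrete surfaces to $\G{n-1}{}$ along the exact normals, use the Demlow-type factorization of the surface-measure ratio into the two curvature factors $(1-\phi^{n-1}\kappa_i)$ and the normal factor $\bn_h^k\cdot\bn^{n-1}$, bound the change of each factor between the two lifted points, and absorb the geometric error via \eqref{cond4}; your treatment of the normal factor through the identity $\bn_h\cdot\bn = 1-\tfrac12|\bn_h-\bn|^2$ (so that only $h^{2q}$ appears) is exactly the paper's argument. However, there is a genuine quantitative gap in your curvature-factor step. You bound $|\phi^{n-1}|$ \emph{separately} on $\G{n-1}{h}$ and on $\G{n}{h}$ through \eqref{eq:dist}/\eqref{phi_h} and \eqref{aux2}, which leaves an $O(h^{q+1})$ contribution in $|a_i^n-a_i^{n-1}|$ and hence in the measure ratio. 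Condition \eqref{cond4} only gives $h^{2q}\lesssim\Delta t$, and for $q\ge 2$ one has $h^{q+1}\gg h^{2q}$; in the admissible regime $\Delta t\sim h^{2q}$ this means $h^{q+1}\gg\Delta t$, so your argument yields $(1+c(\Delta t+h^{q+1}))$ rather than $(1+c_{\ref{lem2a}}\Delta t)$. That weaker factor is not just cosmetic: accumulated over $T/\Delta t$ steps in Lemma \ref{lem2} and Theorems \ref{Th1}--\ref{Th2} it produces $\exp(cTh^{q+1}/\Delta t)$, which is unbounded in that regime. (For $q=1$, where $h^{q+1}=h^{2q}$, your estimate does suffice.)

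The missing idea is to never compare the lifted points to the \emph{exact} surface, but to measure their separation along the normal line directly with the \emph{discrete} level set functions: since $\phi_h^{n-1}(\bx^{n-1})=0$ and $\phi_h^{n}(\bx^{n})=0$, evaluating $\phi_h^{n-1}$ at the single point $\bx^n$ and using \eqref{phi_hba} gives $|\phi_h^{n-1}(\bx^n)|=|\phi_h^{n-1}(\bx^n)-\phi_h^{n}(\bx^n)|\lesssim \|\wn\|_{\infty,I_n}\Delta t$, and after correcting the direction by $\nabla\phi_h^{n-1}$ versus $\nabla\phi^{n-1}$ (an $O(h^q)$ perturbation, cf. \eqref{aux7}) one obtains $|\bx^n-\bx^{n-1}|\lesssim \delta_n/(1-ch^q)\lesssim\Delta t$. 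A mean value argument for the smooth function $\phi^{n-1}\kappa_i$ along this normal segment then gives $|a_i^n-a_i^{n-1}|\lesssim \|\kappa\|_{\infty}\|\wn\|_{\infty,I_n}\Delta t$ with no $h^{q+1}$ term at all (cf. \eqref{aux5}), so the only geometric error left is the $h^{2q}$ from the normal factor, which \eqref{cond4} does absorb. As a minor point, the bijectivity of the lift of $\G{n}{h}$ onto $\G{n-1}{}$ requires not only $h$ small but also $c_{\bl{}}$ in \eqref{eq:conddtkappa} small (so that $\G{n}{h}$ stays in the tubular neighborhood where $\bp^{n-1}$ is injective); this is covered by the hypotheses but should be stated.
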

\begin{proof}
For $n = 1,\ldots,N$ and $k=n-1,n$ we define the lift operator from $\G{n}{}$ to $\G{k}{h}$,
\begin{equation} \label{eq:lift} \asslabel
  \blk{n}{k}: \G{n}{} \to \G{k}{h}, \quad \blk{n}{k}(\bx) = \bx + d^n(\bx) \bn^n(\bx),
\end{equation}
where $d^n(\bx)\in \rr$ is the smallest (in absolute value) value so that $\bx + d^n(\bx) \bn^n(\bx) \in \G{k}{h}$. For $k=n$ we also write $\bl{n} = \blk{n}{n}$.
These liftings are well-defined bijection mappings if $h$  and $c_{\bl{}}$ in \eqref{eq:conddtkappa} are sufficiently small.

  For $\bx\in\G{n-1}{}$, we make use of the lift operators $\blk{n-1}{k}\,:\,\G{n-1}{}\to \G{k}{h}$ such that $\blk{n-1}{k}(\bp^{n-1}(\bx))=\bx$ on $\G{k}{h}$, $k=n-1,n$. For $\bx\in\G{n-1}{}$, denote $\bx^{k}=\blk{n-1}{k}(\bx)\in\G{k}{h}$, $k=n-1,n$, cf. Figure \ref{fig:lift}.
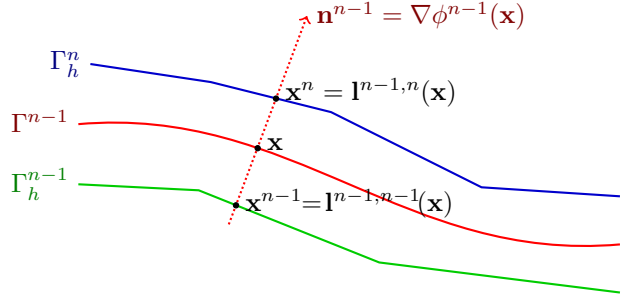
\begin{figure}
  \begin{center}
    \begin{tikzpicture}[scale=0.8]
      \draw[red,thick] (1,3) to[out=5,in=185] (10,1);
      \draw[green!80!black,thick] (1,2) -- (3,1.9) -- (6,0.7) -- (10,0.2);
      \draw[blue!80!black,thick] (1.2,4) -- (3.2,3.7) -- (5.2,3.2) -- (7.7,1.95) -- (10,1.8);
      \draw[->,red,thick, densely dotted] (3.5,1.333) -- (4.8,4.8);
      \node[right] at (4.8,4.8) {\color{red!50!black}$\bn^{n-1} = \nabla \phi^{n-1}(\mathbf{x})$};
      \filldraw (3.98,2.6) circle (1.25pt);
      \filldraw (3.62,1.65) circle (1.25pt);
      \filldraw (4.285,3.425) circle (1.25pt);
      \node[right] at (3.98,2.67) {$\bx$};
      \node[right] at (4.285,3.525) {$\bx^n = \blk{n-1}{n}(\bx)$};
      \node[right] at (3.62,1.68) {$\bx^{n-1}\!\!=\!\blk{n-1}{n-1}\!(\bx)$};
      \node[left] at (1,2) {\color{green!50!black} $\G{n-1}{h}$};
      \node[left] at (1.2,4) {\color{blue!50!black} $\G{n}{h}$};
      \node[left] at (1,3) {\color{red!50!black} $\G{n-1}{}$};
    \end{tikzpicture}
  \vspace*{-0.2cm}
  \end{center}
  \caption{Sketch of the geometries in the proof of Lemma \ref{lem2a}.}
  \vspace*{-0.2cm}
  \label{fig:lift}
\end{figure}
For the ratio of surface measures on $\G{k}{h}$, $k=n-1,n$, and $\G{n-1}{}$, so that
\begin{equation}\label{aux2b}
  \mu^k_h(\bx^k)d\bs^k_h(\bx^k) = d\bs^{n-1}(\bx),\quad \bx\in\G{n-1}{},
\end{equation}
there holds, cf.
\ifarxiv
Lemma \ref{lem:surfmeasratio} in the appendix,
\else
\cite[Appendix]{arXiv},
\fi
\begin{equation*}
  |1-\mu^n_h(\bx^n)/\mu^{n-1}_h(\bx^{n-1})| \lesssim
  \ifarxiv
  c_{\ref{lem:surfmeasratio}}
  \else
  c_{\mu}
  \fi
  \Delta t.
\end{equation*}
Transformation of the integrals on $\G{n}{h}$ and $\G{n-1}{h}$ to $\G{n-1}{}$ concludes the proof.
\end{proof}
We require an analogue to Lemma \ref{lem2a}, where a discrete normal gradient in the volume is used to replace the closest point projection.
This lemma needs some preparatory results, the following Lemmas \ref{SQ} and \ref{Ud}.
\begin{lemma}\label{SQ}  For $S\in\T_h$,   let  $Q\subset S$ be a subdomain of Lebesgue measure $|Q|$. Then it holds,
\begin{equation}\label{SQ_ineq}
\|f\|_{L^2(Q)}\le c\,(|Q|/|S|)^{\frac{1}{2}} \|f\|_{L^2(S)}, \quad \forall~f\in \mathcal{P}_l(S),~l\ge0,
\end{equation}
with a constant $c$, which is independent of $f$, $S$ and $Q$, but may depend on $l$ and the minimal angle condition in $\T_h$.
\end{lemma}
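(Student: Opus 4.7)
The plan is to reduce the estimate to the reference simplex $\hat{S}$ via the standard affine map $F_S: \hat{S} \to S$, and then exploit the finite-dimensional nature of $\mathcal{P}_l(\hat S)$ to turn the measure $|Q|$ into an $L^2$-bound. Specifically, for $f \in \mathcal{P}_l(S)$ set $\hat f = f \circ F_S \in \mathcal{P}_l(\hat S)$ and $\hat Q = F_S^{-1}(Q) \subset \hat S$. The affine change of variables preserves ratios of Lebesgue measures, so $|\hat Q|/|\hat S| = |Q|/|S|$, and it scales $L^2$-norms by a common factor: $\|f\|_{L^2(Q)}^2 = |\det DF_S|\,\|\hat f\|_{L^2(\hat Q)}^2$ and $\|f\|_{L^2(S)}^2 = |\det DF_S|\,\|\hat f\|_{L^2(\hat S)}^2$. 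Hence it suffices to prove the claim on $\hat S$.

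On the reference simplex, the first key step is the trivial bound
\[
\|\hat f\|_{L^2(\hat Q)}^2 \le |\hat Q|\, \|\hat f\|_{L^\infty(\hat S)}^2.
\]
The second key step is the finite-dimensional norm equivalence on $\mathcal{P}_l(\hat S)$: since $\mathcal{P}_l(\hat S)$ is a finite-dimensional vector space, all norms on it are equivalent, so there is a constant $c_l$ (depending only on $l$ and the reference shape) such that
\[
\|\hat f\|_{L^\infty(\hat S)} \le c_l \, |\hat S|^{-1/2} \|\hat f\|_{L^2(\hat S)}.
\]
Combining these gives $\|\hat f\|_{L^2(\hat Q)}^2 \le c_l^2 \,(|\hat Q|/|\hat S|)\, \|\hat f\|_{L^2(\hat S)}^2$, and transforming back yields the desired inequality with a constant depending only on $l$ and on the quality of the affine map $F_S$, i.e.\ on the shape-regularity (minimum angle) of $\T_h$.

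There is no real obstacle here; the only subtlety to state carefully is that the constant $c_l$ is independent of the particular element $S$ because we work on the fixed reference simplex, and the dependence on the minimal angle condition enters only through the equivalence of norms under the pullback $F_S$ (which is bypassed here because the $L^2$ Jacobian factors cancel in the ratio). The proof is therefore brief: pull back to $\hat S$, use $L^\infty \lesssim L^2$ equivalence on the finite-dimensional space $\mathcal{P}_l(\hat S)$, bound $L^2(\hat Q)$ by $|\hat Q|\,L^\infty(\hat S)^2$, and push forward.
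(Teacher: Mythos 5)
Your proposal is correct and follows essentially the same route as the paper's proof: map to the reference simplex, bound $\|\hat f\|_{L^2(\hat Q)}$ by $|\hat Q|^{1/2}\|\hat f\|_{L^\infty(\hat S)}$, invoke equivalence of norms on the finite-dimensional space $\mathcal{P}_l(\hat S)$, and transform back using that the affine Jacobian factors cancel in the ratio. The only (harmless) difference is your correct observation that this cancellation makes the shape-regularity dependence essentially immaterial, whereas the paper simply cites the angle condition among the ``standard arguments.''
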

\begin{proof}
  Let $\psi\,:\,S\to\widehat{S}$ be the affine mapping to the reference simplex in $\mathbb{R}^d$; $\widehat{Q}=\psi(Q)$.
For $\hat{f}=f\circ\psi^{-1}\in\mathcal{P}_l(\widehat{S})$, we have due to norm equivalence in finite dimensional spaces
\begin{equation*}
\begin{aligned}
\|\hat f\|_{L^2(\widehat{Q})}&\le |\widehat{Q}|^{\frac12}\|\hat f\|_{L^\infty(\widehat{Q})} \le |\widehat{Q}|^{\frac12}\|\hat f\|_{L^\infty(\widehat{S})}
\le c|\widehat{Q}|^{\frac12}\|\hat{f}\|_{L^2(\widehat{S})}
\end{aligned}
\end{equation*}
where $c$ is independent of $f$, $S$ and $Q$.
Standard arguments, i.e. changing the domain of integration, using  the maximum angle condition and noting
$|\widehat{Q}|\simeq |\widehat{Q}|/|\widehat{S}|=|{Q}|/|{S}|$ complete the proof.
\end{proof}

We note in passing that the proof of the lemma obviously holds for any finite dimensional space $V$ on $S$ (instead of $\mathcal{P}_l(S)$) such that $V=\psi^{-1}(\widehat{V})$, where $\widehat{V}$ is  a fixed ($S$-independent) finite dimensional subspace  of $L^\infty(\widehat{S})$.
  We apply the result of the above to arrive at the following lemma.
  \begin{lemma}\label{Ud} For all $u \in V_h^{n}$, $n=1,..,N$, there holds
\begin{subequations}\label{SQ_esta}
\begin{align}
\|\nabla u\|_{U_{\delta_n}(\Gamma_h^n)}^2&\lesssim \delta_n(\delta_n+h)^{-1} \|\nabla u\|_{\O(\G{n}{h})}^2,\\
\|\bn_h^n \cdot \nabla u\|_{U_{\delta_n}(\Gamma_h^n)}^2&\lesssim \delta_n(\delta_n+h)^{-1} \|\bn_h^n \cdot \nabla u\|_{\O(\G{n}{h})}^2.
\end{align}
\end{subequations}
\end{lemma}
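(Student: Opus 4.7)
The plan is to reduce both bounds to a local, element-by-element argument based on Lemma~\ref{SQ}. For each $S\in\mathcal{S}(\G{n}{h})$ I would set $Q_S := S\cap U_{\delta_n}(\G{n}{h})$, so that $\{Q_S\}_S$ covers $U_{\delta_n}(\G{n}{h})$, and aim to establish the geometric bound $|Q_S|/|S| \lesssim \delta_n/(\delta_n+h)$ uniformly in $S$. The rest of the argument is then a matter of identifying, for each factor to be estimated, a finite-dimensional polynomial space in which it (or an equivalent proxy) lives, and invoking Lemma~\ref{SQ}.

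For the geometric bound I would first use the signed-distance assumption $|\nabla\phi^n|\equiv 1$ together with \eqref{phi_h} to conclude that $|\nabla\phi_h^n|\simeq 1$ throughout $\O(\G{n}{h})$ for $h$ small enough. The co-area formula then gives $|Q_S|\lesssim \delta_n h^{d-1}$, since each level slice $S\cap\{\phi_h^n=t\}$ inside a shape-regular element of diameter $\lesssim h$ has $(d-1)$-Hausdorff measure $\lesssim h^{d-1}$. Combined with the trivial bound $|Q_S|\le|S|\simeq h^d$ this yields $|Q_S|/|S|\lesssim \min(1,\delta_n/h)\simeq \delta_n/(\delta_n+h)$. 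The first inequality then follows immediately: each component of $\nabla u$ lies in $\mathcal{P}_{m-1}(S)$, so Lemma~\ref{SQ} applied componentwise gives $\|\nabla u\|_{L^2(Q_S)}^2\lesssim (|Q_S|/|S|)\,\|\nabla u\|_{L^2(S)}^2$, and summing over $S\in\mathcal{S}(\G{n}{h})$ completes the proof of \eqref{SQ_esta}a.

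For the second inequality the function $\bn_h^n\cdot\nabla u$ is \emph{not} polynomial on $S$, so Lemma~\ref{SQ} does not apply directly. My plan is to write $\bn_h^n\cdot\nabla u = (\nabla\phi_h^n\cdot\nabla u)/|\nabla\phi_h^n|$ and use the pointwise equivalence $|\nabla\phi_h^n|\simeq 1$ to replace $\bn_h^n\cdot\nabla u$ by $\nabla\phi_h^n\cdot\nabla u$ up to bounded multiplicative constants on both $Q_S$ and $S$. The latter \emph{is} a polynomial on $S$, of degree at most $m+q-2$, so Lemma~\ref{SQ} yields $\|\nabla\phi_h^n\cdot\nabla u\|_{L^2(Q_S)}^2 \lesssim (|Q_S|/|S|)\,\|\nabla\phi_h^n\cdot\nabla u\|_{L^2(S)}^2$, which I then translate back into a bound on $\bn_h^n\cdot\nabla u$ and sum over $S$. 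The main obstacle is the sharp geometric ratio bound $|Q_S|/|S|\lesssim \delta_n/(\delta_n+h)$, whose correct $\min(1,\delta_n/h)$-behaviour in both regimes $\delta_n\ll h$ and $\delta_n\gtrsim h$ is what prevents us from just using a crude $\lesssim 1$ bound and is precisely what the signed-distance assumption (through $|\nabla\phi_h^n|\simeq 1$) is needed for.
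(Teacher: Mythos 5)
Your proposal is correct and follows essentially the same route as the paper: an element-by-element application of Lemma~\ref{SQ} to $Q_S=S\cap U_{\delta_n}(\G{n}{h})$ with the measure ratio $|Q_S|/|S|\lesssim\min\{1,\delta_n/h\}\simeq\delta_n(\delta_n+h)^{-1}$, which the paper obtains from shape regularity and resolution of the surface (treating the cases $\delta_n\lesssim h$ and $\delta_n\gtrsim h$ separately) and you obtain via the co-area formula and $|\nabla\phi_h^n|\simeq 1$. Your explicit replacement of $\bn_h^n\cdot\nabla u$ by the polynomial $\nabla\phi_h^n\cdot\nabla u$ using $|\nabla\phi_h^n|\simeq 1$ is a slightly more careful rendering of the step the paper passes over when it calls $(\bn_h\cdot\nabla u)|_S$ a polynomial of fixed degree.
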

\begin{proof}
  First, we note that $\delta_n (\delta_n + h)^{-1} \simeq \min\{ \delta_n/h,1\}$. Correspondingly, we distinguish the cases $\delta_n/h \leq c $ and $\delta_n/h > c$ for a fixed small constant $c$. If $\delta_n/h > c $ we have $\delta_n (\delta_n + h)^{-1} \simeq 1$ so that the result is obvious. Hence, we consider $\delta_n/h \leq c$. Due to shape regularity and resolution of the surface $\Gamma^n$ by the mesh, we have that for every $S \in {\mathcal{S}}(\G{n}{h})$ with $Q_S = U_{\delta_n}(\G{n}{h}) \cap S$ there holds $|Q_S| \lesssim 2 h^2 \delta_n$ and $|S| \gtrsim h^3$. Hence, for every polynomial $p \in \mathcal{P}_l(S)$ there holds with Lemma \ref{SQ} $\norm{p}_{Q_S}^2 \lesssim \delta_n/h \norm{p}_{S}^2$ with a constant that is independent of $S \in {\mathcal{S}}(\G{n}{h})$.
As $\nabla u|_S$ and $ (\bn_h \cdot \nabla u)|_S$ are polynomials of fixed degrees we can apply this result element by element (with a uniform constant) which concludes the proof.
\end{proof}

\begin{lemma} \label{lem2} Under the conditions of Lemma \ref{lem2a} the following estimate holds for all $v_h \in V_h^{n-1}$,
 \begin{equation} \label{fund2}
   \|v_h\|_{\G{n}{h}}^2
   \le
    (1+c_{\ref{lem2}a}\Delta t)
    \|v_h\|_{\G{n-1}{h}}^2 + c_{\ref{lem2}b} \delta_{n-1} (\delta_{n-1}+h)^{-1} \|\bn_h^{n-1} \cdot \nabla v_h\|_{\O(\G{n-1}{h})}^2,
\end{equation}
for some $c_{\ref{lem2}a}$ and $c_{\ref{lem2}b}$ independent of $h$, $\Delta t$ and $n$.
\end{lemma}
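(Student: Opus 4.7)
My plan is to split $v_h$ on $\G{n}{h}$ into a normal-constant part, handled by Lemma \ref{lem2a}, and a remainder controlled by the normal-gradient stabilization via Lemma \ref{Ud}. For $\bx$ in a neighborhood of $\G{n-1}{}$ I write
\[
v_h(\bx) = v_h\bigl(\bp^{n-1}(\bx)\bigr) + \bigl[v_h(\bx) - v_h\bigl(\bp^{n-1}(\bx)\bigr)\bigr],
\]
with $\bp^{n-1}$ the closest point projection onto $\G{n-1}{}$. This decomposition is well-defined on both $\G{n}{h}$ and $\G{n-1}{h}$ since all relevant normal lines stay inside $\O(\G{n-1}{h})$, thanks to \eqref{cond1}, \eqref{phi_h} and \eqref{cond4}.

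A Young's inequality with parameter $\epsilon=\Delta t$ yields
\[
\|v_h\|_{\G{n}{h}}^2 \le (1+\Delta t)\|v_h\circ\bp^{n-1}\|_{\G{n}{h}}^2 + (1+(\Delta t)^{-1})\|v_h - v_h\circ\bp^{n-1}\|_{\G{n}{h}}^2.
\]
I would then apply Lemma \ref{lem2a} to the first summand with $v:=v_h|_{\G{n-1}{}}\in L^2(\G{n-1}{})$, followed by the same Young's split on $\G{n-1}{h}$ that replaces $v_h\circ\bp^{n-1}$ by $v_h$:
\[
\|v_h\circ\bp^{n-1}\|_{\G{n-1}{h}}^2 \le (1+\Delta t)\|v_h\|_{\G{n-1}{h}}^2 + (1+(\Delta t)^{-1})\|v_h - v_h\circ\bp^{n-1}\|_{\G{n-1}{h}}^2.
\]
Chaining these with $(1+c_{\ref{lem2a}}\Delta t)$ from Lemma \ref{lem2a} gives an overall coefficient $1 + O(\Delta t)$ in front of $\|v_h\|_{\G{n-1}{h}}^2$, giving $c_{\ref{lem2}a}$, together with two remainder contributions.

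The remainder terms will be controlled by the fundamental theorem of calculus along the exact normal line $\bn^{n-1}$ combined with Cauchy--Schwarz: for any $\bz$ with $\bp^{n-1}(\bz)\in\G{n-1}{}$,
\[
|v_h(\bz) - v_h(\bp^{n-1}(\bz))|^2 \le |\phi^{n-1}(\bz)|\int_0^{|\phi^{n-1}(\bz)|}|\bn^{n-1}\cdot\nabla v_h|^2\,ds.
\]
On $\G{n}{h}$ one has $|\phi^{n-1}|\lesssim\delta_{n-1}$ via \eqref{cond1}, \eqref{phi_h}, \eqref{cond4}; on $\G{n-1}{h}$ one has $|\phi^{n-1}|\lesssim h^{q+1}$ via \eqref{phi_h}. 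I would integrate over the surfaces and convert the surface-plus-line double integral into a volume integral over the tubular neighborhood $U_\delta(\G{n-1}{})$ of radius $\delta\sim\delta_{n-1}$ or $\delta\sim h^{q+1}$ respectively, using the bounded Jacobian of the normal parametrization (the same type of estimate used in the proofs of Lemma \ref{l_est1} and Lemma \ref{lem2a}). Switching $\bn^{n-1}$ to $\bn_h^{n-1}$ at the cost of an $O(h^q)$ geometric error from \eqref{eq:normals}, I reduce the bound to $\|\bn_h^{n-1}\cdot\nabla v_h\|_{U_\delta(\G{n-1}{h})}^2$, and Lemma \ref{Ud} then delivers the decisive factor $\delta/(\delta+h)$.

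Assembling the weights with $\epsilon=\Delta t$, the $\G{n}{h}$-remainder contributes a coefficient of order $(\Delta t)^{-1}\delta_{n-1}\cdot\delta_{n-1}/(\delta_{n-1}+h)\simeq \|\wn\|_\infty\,\delta_{n-1}/(\delta_{n-1}+h)$ in front of $\|\bn_h^{n-1}\cdot\nabla v_h\|_{\O(\G{n-1}{h})}^2$, while the $\G{n-1}{h}$-remainder contributes $(\Delta t)^{-1}h^{q+1}\cdot h^q = (h^{2q}/\Delta t)\cdot h\lesssim h$ via \eqref{cond4}, which is of lower order and absorbs into the same term (with constants permitted to depend on $\|\wn\|_\infty$). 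The main obstacle I anticipate is the careful tubular-coordinates conversion of surface-plus-line integrals into volume integrals while tracking the geometric errors arising from $\bn^{n-1}\neq\bn_h^{n-1}$ and $\G{n-1}{}\neq\G{n-1}{h}$, ensuring that no $\|\nabla v_h\|^2$-term outside the $\bn_h^{n-1}$-direction survives; assumption \eqref{cond4} is exactly what allows these errors to be controlled.
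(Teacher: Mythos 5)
Your toolkit (FTC along exact normals, tube-to-volume conversion, the normal-error bound \eqref{eq:normals}, Lemma \ref{Ud}, condition \eqref{cond4}) is the right one, but the way you assemble it through $\epsilon=\Delta t$ Young splittings leaves a genuine gap. The second split, which trades $\|v_h\circ\bp^{n-1}\|_{\G{n-1}{h}}$ for $\|v_h\|_{\G{n-1}{h}}$, charges the purely geometric remainder $\|v_h-v_h\circ\bp^{n-1}\|^2_{\G{n-1}{h}}$ with the weight $(\Delta t)^{-1}$. Following your own estimates (width of the region between $\G{n-1}{}$ and $\G{n-1}{h}$ is $\lesssim h^{q+1}$, Lemma \ref{SQ}/\ref{Ud} gives a factor $h^{q+1}/h$), the best you obtain for its normal-gradient part is $(\Delta t)^{-1}h^{2q+1}\|\bn_h^{n-1}\cdot\nabla v_h\|^2_{\O(\G{n-1}{h})}$, and \eqref{cond4} only converts this into $c\,h\,\|\bn_h^{n-1}\cdot\nabla v_h\|^2_{\O(\G{n-1}{h})}$. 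But $h\not\lesssim\delta_{n-1}(\delta_{n-1}+h)^{-1}$ in general: the stabilization term on the right of \eqref{fund2} is proportional to $\delta_{n-1}\simeq \|\wn\|_{\infty,I_n}\Delta t$ and degenerates for small normal velocity (or $\delta_{n-1}\lesssim h^2$), while your leftover $h$-term does not; absorbing it would force $c_{\ref{lem2}b}$ to blow up like $\Vert\wn\Vert^{-1}$, and the weakened bound is also insufficient for \eqref{fund2simple} under \eqref{cond3}. The same defect enters your first remainder, because on $\G{n}{h}$ condition \eqref{cond1} controls $|\phi_h^{n-1}|\le\delta_{n-1}$, not $|\phi^{n-1}|$, so $|\phi^{n-1}|\lesssim\delta_{n-1}+h^{q+1}$ and the $h^{q+1}$ portion, weighted by $(\Delta t)^{-1}$, again produces an $O(h)\|\bn_h^{n-1}\cdot\nabla v_h\|^2$ term. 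So your argument, as assembled, proves only \eqref{fund2} with an extra $c\,h\,\|\bn_h^{n-1}\cdot\nabla v_h\|^2_{\O(\G{n-1}{h})}$ on the right, not the stated inequality.

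The paper's proof avoids the $\Delta t^{-1}$ weight altogether and this is the point you should adopt. It lifts $v_h$ from the \emph{discrete} surface $\G{n-1}{h}$ (not from $\G{n-1}{}$) along the exact normals of $\G{n-1}{}$, so that $v_h^\ell=v_h$ on $\G{n-1}{h}$ exactly; then Lemma \ref{lem2a} applies to $\|v_h^\ell\|_{\G{n}{h}}$ with no second remainder. The remaining piece is written as $\int_{\G{n}{h}}(|v_h|^2-|v_h^\ell|^2)\,ds\lesssim\int_{U_{\delta_{n-1}}}|\bn^{n-1}\cdot\nabla(|v_h|^2-|v_h^\ell|^2)|\,d\bx$ (the integrand vanishes on $\G{n-1}{h}$ and the path stays inside $U_{\delta_{n-1}}(\G{n-1}{h})$ by \eqref{cond1}, so $v_h$ never needs to be evaluated near $\G{n-1}{}$ itself); one then uses $\bn^{n-1}\cdot\nabla|v_h^\ell|^2=0$, Cauchy--Schwarz on $2v_h\,\bn_h^{n-1}\cdot\nabla v_h$ with $O(1)$ Young weights, the $O(h^q)$ normal error together with an inverse inequality, and lets all the smallness come from Lemma \ref{Ud} (the factor $\delta_{n-1}(\delta_{n-1}+h)^{-1}$ from restriction to the width-$\delta_{n-1}$ tube) and Theorem \ref{lemcrucial} (converting $\|v_h\|^2_{U_{\delta_{n-1}}}$ and $\|v_h\|^2_{\O(\G{n-1}{h})}$ into $\delta_{n-1}\|v_h\|^2_{\G{n-1}{h}}$ plus admissible normal-gradient terms). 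If you replace your projection-onto-$\G{n-1}{}$ decomposition by this lift-from-$\G{n-1}{h}$ decomposition and drop the $\epsilon=\Delta t$ splittings in favour of the difference-of-squares argument, your remaining steps go through and give exactly \eqref{fund2}.
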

\begin{proof} We first note that $\G{n}{h}\subset U_{\delta_{n-1}}(\G{n-1}{h})\subset\O(\G{n-1}{h})$, cf. condition \eqref{cond1}.
From conditions  \eqref{cond1} and \eqref{neigh_cond} we know that both $\G{n}{h}$ and $\G{n-1}{h}$ are in $\O(\G{n-1}{})$. Hence, we can define
a lift $v^\ell\in L^2(\G{n}{h})$ for  $v\in L^2(\G{n-1}{h})$ along normal directions to $\G{n-1}{}$, i.e.,
$v^\ell(\bx)=v(\blk{n-1}{n} \bp^{n-1}(\bx))$, $\bx\in\G{n}{h}$.
We start with the splitting
$$
\|v_h\|_{\G{n}{h}}^2 = \int_{\G{n}{h}}(|v_h|^2-|v_h^\ell|^2)\,d\bx+\|v_h^\ell\|_{\G{n}{h}}^2
$$
and bound the first term on the right-hand side (we abbreviate $U_{\delta_{n-1}} = U_{\delta_{n-1}}(\G{n-1}{h})$ here):
  \begin{align*}
    &
      \int_{\G{n}{h}}( |v_h|^2-|v_h^\ell|^2)\,d\bs
      \lesssim  \int_{U_{\delta_{n-1}}} \left| \bn^{n-1}\cdot\nabla(|v_h|^2-|v_h^\ell|^2) \right| \,d\bx
    &
      ({\footnotesize |v_h|^2=|v_h^\ell|^2~\text{on}~\G{n-1}{h}} )
    \\
    \leq
    &
      \, \int_{U_{\delta_{n-1}}} \left| \bn_h^{n-1}\cdot\nabla|v_h|^2 \right| \,d\bx
      +\, \int_{U_{\delta_{n-1}}} \left| (\bn^{n-1}-\bn_h^{n-1})\cdot\nabla|v_h|^2 \right| \,d\bx
    &
      ({\footnotesize \text{as } \bn^{n-1}\cdot\nabla |v_h^\ell|^2\!\! = 0} )
    \\
    \lesssim
    &
      \|\bn_h^{n-1}\cdot\nabla v_h\|_{{U_{\delta_{n-1}}}}\|v_h\|_{U_{\delta_{n-1}}}
      +\|\bn^{n-1}-\bn_h^{n-1}\|_{\infty,\O(\G{n-1}{h})} \|\nabla v_h\|_{U_{\delta_{n-1}}}\|v_h\|_{U_{\delta_{n-1}}}
    \\
    \lesssim
    &
      \|\bn_h^{n-1}\cdot\nabla v_h\|_{U_{\delta_{n-1}}}^2 +   \|v_h\|_{U_{\delta_{n-1}}}^2 +\frac{h^{q}\delta_{n-1}^{\frac12}}{(\delta_{n-1}+h)^{\frac12}} \|\nabla v_h\|_{\O(\G{n-1}{h})}\|v_h\|_{U_{\delta_{n-1}}}
    &
      ({\footnotesize \text{\eqref{e:delta} and Lem.\ref{Ud}}})
    \\
    \lesssim
    &
      \|\bn_h^{n-1}\cdot\nabla v_h\|_{U_{\delta_{n-1}}}^2 +   \|v_h\|_{U_{\delta_{n-1}}}^2 +\frac{h^{q-1}\delta_{n-1}^{\frac12}}{(\delta_{n-1}+h)^{\frac12}} \| v_h\|_{\O(\G{n-1}{h})}\|v_h\|_{U_{\delta_{n-1}}}
    &
      ({\footnotesize \text{FE inv. ineq.}})
    \\
    \lesssim
    &
      \frac{\delta_{n-1}}{(\delta_{n-1}+h)}  \|\bn_h^{n-1}\cdot\nabla v_h\|_{\O(\G{n-1}{h})}^2
      + \frac{\delta_{n-1}}{(\delta_{n-1}+h)} \| v_h\|_{\O(\G{n-1}{h})}^2+ \|v_h\|_{U_{\delta_{n-1}}}^2
    &
      ({\footnotesize\text{Lem.\ref{Ud}, } h^{q-1}\lesssim 1})
    \\
    \lesssim
    &
      \frac{\delta_{n-1}}{(\delta_{n-1}+h)}  \|\bn_h^{n-1}\cdot\nabla v_h\|_{\O(\G{n-1}{h})}^2
      + \delta_{n-1}  \| v_h\|_{\G{n-1}{h}}^2
    &
      ({\footnotesize\text{Thm.~\ref{lemcrucial}}}).
\end{align*}
Next, we apply Lemma \ref{lem2a},
$\|v_h^\ell\|_{\G{n}{h}}^2 \leq  (1+ c_{\ref{lem2a}} \Delta t) \  \|v_h\|_{\G{n-1}{h}}^2$
so that we obtain
\begin{align*}
  \|v_h\|_{\G{n}{h}}^2
  \leq
 (1+\underbrace{c_{\ref{lem2a}} \Delta t + c \delta_{n-1}}_{c_{\ref{lem2}a} \Delta t}) \| v_h\|_{\G{n-1}{h}}^2
 + c_{\ref{lem2}b} \delta_{n-1}(\delta_{n-1}+h)^{-1} \|\bn_h^{n-1}\cdot\nabla v_h\|_{\O(\G{n-1}{h})}^2.
\end{align*}
\end{proof}

\subsection{Stability analysis} \label{s_stab}
For the well-posedness and numerical stability we need some additional conditions on the discretization parameters.
First, we formulate a condition on the time step size analogously to \eqref{e:xi}:
\begin{equation}\label{cond2}\asslabel
  \Delta t \le (4\xi_h)^{-1} \text{ with } \xi_h := \max_{n=0,..,N} \Vert \Div_{\G{}{h}} (\bw^e -\frac12 \wte) \Vert_{{\infty},\G{}{h}(t_n)}.
\end{equation}
From the definition of $\xi_h$ and geometrical approximation condition \eqref{phi_h}, it follows that
\begin{equation}\label{ass2}
\xi_h\le C_0,
\end{equation}
with some $C_0$ independent of $\Delta t$ and $h$.

For the stability analysis we formulate the \emph{lower} bound condition on  $\stab$:
\begin{equation}\label{cond3}\asslabel
  \stab \ge c_{\delta} c_{\ref{lem2}b} \Vert \wn \Vert_{\infty,I_n} (\delta_n+h)^{-1},
\end{equation}
with $c_{\ref{lem2}b}$ as in Lemma \ref{lem2} (which is a constant independent of $\Delta t$ and $h$). This condition and \eqref{e:delta} imply $ \stab \Delta t \geq c_{\ref{lem2}b} \delta_n (\delta_n + h)^{-1}$.

With conditions \eqref{cond4} and \eqref{cond3} fulfilled,
estimate \eqref{fund2} simplifies to
 \begin{equation} \label{fund2simple}
   \|v_h\|_{\G{n}{h}}^2
   \le
    (1+c_{\ref{lem2}a} \Delta t)
    \|v_h\|_{\G{n-1}{h}}^2 + \stabold \Delta t \|\bn_h^{n-1} \cdot \nabla v_h\|_{\O(\G{n-1}{h})}^2 \quad \forall\,v_h \in V_h^{n-1}.
\end{equation}

For the notation convenience,  we introduce the bilinear form,
\begin{equation} \label{e:def:an}
  \begin{split}
    a_n(u,v) := &
    \int_{\G{n}{h}} \left( \frac12 (\wte\cdot\nabla_{\G{}{h}} u) v - \frac12 (\wte\cdot\nabla_{\G{}{h}} v) u  +(\Div_{\G{}{h}} (\bw^e - \frac12 \wte)) u v\right) \, ds \\
    & +\nu\int_{\G{n}{h}}(\nabla_{\G{}{h}}u)\cdot(\nabla_{\G{}{h}}v)\, ds+\stab\int_{\O(\G{n}{h})}(\bn_h^n\cdot\nabla u)(\bn_h^n\cdot\nabla v) d\,\bx
  \end{split}
\end{equation}
for $u,v\in H^1(\O(\G{n}{h}))$.
We estimate $a_n(v_h,v_h)$ from below,
\begin{equation}\label{lower}
\begin{split}
a_n(v_h,v_h)&=
\nu\|\nabla_{\G{}{h}}v_h\|^2_{\G{n}{h}}+((\Div_{\G{}{h}} (\bw^e-\frac12\wte) v_h,v_h)_{\G{n}{h}} + \stab\|\bn_h^n \cdot \nabla v_h\|_{\O(\G{n}{h})}^2
\\&\ge
\nu\|\nabla_{\G{}{h}}v_h\|^2_{\G{n}{h}} - \xi_h \|v_h\|^2_{\G{n}{h}} +\stab \|\bn_h^n \cdot \nabla v_h\|_{\O(\G{n}{h})}^2
\end{split}
\end{equation}
for any $v_h\in V^n_h$.
 Using \eqref{lower} and \eqref{cond2}  we  check that the bilinear form on the left-hand side of \eqref{e:traceFEM1} is positive definite,
\begin{equation}\label{coer}
\begin{split}
\int_{\G{n}{h}}\frac1{\Delta t}v_h^2\,ds+a_n(v_h,v_h)&\ge
\frac1{2\Delta t}\|v_h\|^2_{\G{n}{h}}+\nu\|\nabla_{\G{}{h}}v_h\|^2_{\G{n}{h}}+\stab\|\bn_h^n \cdot \nabla v_h\|_{\O(\G{n}{h})}^2.
\end{split}
\end{equation}
Hence, due to the Lax-Milgram lemma, the problem in each time step of \eqref{e:traceFEM1} is well-posed.

\medskip
We next derive an \textit{a priori} estimate for the finite element solution to \eqref{e:traceFEM1}.

\begin{theorem}\label{Th1}
  Assume conditions
  \eqref{eq:conddtkappa},
  \eqref{cond1},
  \eqref{cond4},
  \eqref{cond2} and
  \eqref{cond3}, then the solution of \eqref{e:traceFEM1} satisfies the following estimate for $\Delta t$ sufficiently small:
  \begin{equation}\label{FE_stab1}
    \|u_h^n\|^2_{\G{n}{h}} \!+\!\!\sum_{k=1}^{n}{\Delta t}\!\left(\!2 \nu\|\nabla_{\G{}{h}} \!\!u_h^k\|^2_{\G{0}{h}}
\!\! +\!\rho_k\|\bn_h^k\!\! \cdot \! \nabla u_h^k\|_{\O(\G{k}{h})}^2\!\right)\!\le\!\frac32 \exp(c_{\ref{Th1}}t_n)\!\left(\!\|u_h^0\|^2_{\G{0}{h}}\!+\!\tilde\rho_0\|\bn_h^0 \!\!\cdot\! \nabla u_h^0\|_{\O(\G{0}{h})}^2\!\right)\!,
\end{equation}
with $c_{\ref{Th1}}$ independent of $h$, $\Delta t$ and $n$, $\tilde\rho_0= c_{\ref{lem2}b}  \delta_0(\delta_0+h)^{-1}$.
\end{theorem}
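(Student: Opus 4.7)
\medskip\noindent
\textbf{Proof plan.} The overall strategy is the standard energy method: test the variational equation \eqref{e:traceFEM1} with $v_h=u_h^n$, exploit the coercivity-type lower bound \eqref{lower} for $a_n$, use Lemma~\ref{lem2} to transport the previous-time-step $L^2$ norm from $\G{n}{h}$ back to $\G{n-1}{h}$, sum over time steps, and conclude with discrete Gronwall. The key subtlety is that Lemma~\ref{lem2} produces a residual normal-gradient term on $\G{n-1}{h}$ on the right-hand side, which must be absorbed by the stabilization term sitting on the left-hand side at the previous time level; this is exactly what condition \eqref{cond3} is designed to enable.

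\medskip\noindent
\textbf{Step 1 (single-step energy identity).} Plugging $v_h=u_h^n$ into \eqref{e:traceFEM1} and using the polarization identity $2a(a-b)=a^2-b^2+(a-b)^2$, I would arrive at
\begin{equation*}
  \|u_h^n\|^2_{\G{n}{h}} + \|u_h^n-u_h^{n-1}\|^2_{\G{n}{h}} + 2\Delta t\, a_n(u_h^n,u_h^n) = \|u_h^{n-1}\|^2_{\G{n}{h}}.
\end{equation*}
Note that $u_h^{n-1}$ is well-defined on $\G{n}{h}$ thanks to \eqref{cond1}. Bounding $a_n(u_h^n,u_h^n)$ from below using \eqref{lower} gives
\begin{equation*}
(1-2\xi_h\Delta t)\|u_h^n\|^2_{\G{n}{h}} + 2\Delta t\,\nu\|\nabla_{\G{}{h}} u_h^n\|^2_{\G{n}{h}} + 2\Delta t\,\stab\|\bn_h^n\cdot\nabla u_h^n\|^2_{\O(\G{n}{h})} \le \|u_h^{n-1}\|^2_{\G{n}{h}}.
\end{equation*}

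\medskip\noindent
\textbf{Step 2 (shift in time).} Applying Lemma~\ref{lem2} to $v_h=u_h^{n-1}\in V_h^{n-1}$ replaces $\|u_h^{n-1}\|^2_{\G{n}{h}}$ by $(1+c_{\ref{lem2}a}\Delta t)\|u_h^{n-1}\|^2_{\G{n-1}{h}} + \tilde\rho_{n-1}\|\bn_h^{n-1}\cdot\nabla u_h^{n-1}\|^2_{\O(\G{n-1}{h})}$, with $\tilde\rho_k:=c_{\ref{lem2}b}\delta_k(\delta_k+h)^{-1}$. The crux is then the identity $\stab_k\Delta t\ge\tilde\rho_k$ that follows from conditions \eqref{cond3} and \eqref{e:delta}, which allows me to split $2\Delta t\,\stab_n\|\bn_h^n\cdot\nabla u_h^n\|^2 = \Delta t\,\stab_n\|\cdot\|^2 + \Delta t\,\stab_n\|\cdot\|^2 \ge \Delta t\,\stab_n\|\cdot\|^2 + \tilde\rho_n\|\cdot\|^2$, thereby "reserving'' a copy of $\tilde\rho_n$ on the left-hand side to be telescoped against the $\tilde\rho_{n-1}$ term from the right-hand side.

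\medskip\noindent
\textbf{Step 3 (summation and telescoping).} Summing the resulting inequality from $k=1$ to $n$, the $\tilde\rho_k$-weighted normal-gradient terms on both sides telescope, leaving only $\tilde\rho_n\|\bn_h^n\cdot\nabla u_h^n\|^2_{\O(\G{n}{h})}$ on the left and $\tilde\rho_0\|\bn_h^0\cdot\nabla u_h^0\|^2_{\O(\G{0}{h})}$ on the right. Likewise, the $\|u_h^k\|^2_{\G{k}{h}}$ terms almost telescope; the mismatch between the coefficients $(1-2\xi_h\Delta t)$ and $(1+c_{\ref{lem2}a}\Delta t)$ produces the sum $(2\xi_h+c_{\ref{lem2}a})\Delta t\sum_{k=1}^{n-1}\|u_h^k\|^2_{\G{k}{h}}$ on the right. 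Assuming $\Delta t$ small enough so that $(1-2\xi_h\Delta t)\ge 2/3$ (which is consistent with \eqref{cond2}) and multiplying through by $3/2$ gives
\begin{equation*}
\|u_h^n\|^2_{\G{n}{h}}+\sum_{k=1}^n\Delta t\left(2\nu\|\nabla_{\G{}{h}} u_h^k\|^2_{\G{k}{h}}+\stab_k\|\bn_h^k\cdot\nabla u_h^k\|^2_{\O(\G{k}{h})}\right) \le \tfrac{3}{2}\left(C_0\|u_h^0\|^2_{\G{0}{h}}+\tilde\rho_0\|\bn_h^0\cdot\nabla u_h^0\|^2_{\O(\G{0}{h})}\right) + C\Delta t\sum_{k=1}^{n-1}\|u_h^k\|^2_{\G{k}{h}},
\end{equation*}
with constants independent of $h,\Delta t,n$ (here $C_0=1+c_{\ref{lem2}a}\Delta t\le e^{c_{\ref{lem2}a}T}$).

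\medskip\noindent
\textbf{Step 4 (discrete Gronwall).} Applying the discrete Gronwall inequality to the $\|u_h^k\|^2_{\G{k}{h}}$ term on the right absorbs the sum into an exponential factor $\exp(c_{\ref{Th1}} t_n)$ in front of the initial data terms, yielding the desired bound \eqref{FE_stab1}. The main technical hurdle here is \emph{Step~2}: ensuring that the lower bound $\stab_n\Delta t\ge\tilde\rho_n$ provided by \eqref{cond3} is compatible with the constant $c_{\ref{lem2}b}$ produced by Lemma~\ref{lem2}, so that the telescoping of the normal-diffusion terms leaves only initial and final contributions (rather than an unbounded drift). Once this is in place, the rest is a routine energy argument plus discrete Gronwall.
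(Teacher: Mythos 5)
Your proposal is correct and follows essentially the same route as the paper: testing with $u_h^n$, invoking the coercivity bound \eqref{lower}, using Lemma~\ref{lem2} together with \eqref{cond3} (i.e.\ $\stab\Delta t\ge\tilde\rho_n$) to shift the previous-step norm back to $\G{n-1}{h}$ while telescoping the normal-gradient terms, and finishing with the discrete Gronwall inequality. The only difference is cosmetic bookkeeping (you keep $\tilde\rho_{n-1}$ and split $2\Delta t\stab$, whereas the paper works with the simplified bound \eqref{fund2simple}), which leads to the same estimate.
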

\begin{proof}
  We test \eqref{e:traceFEM1} with $v_h=u_h^n$. This leads us to the identity
\[
\frac1{2\Delta t}(\|u_h^n\|^2_{\G{n}{h}}+\|u_h^n-u_h^{n-1}\|^2_{\G{n}{h}})+a_n(u_h^n,u_h^n)=\frac1{2\Delta t}\|u_h^{n-1}\|^2_{\G{n}{h}}.
\]
We drop out the second term, use the lower bound \eqref{lower} and apply \eqref{fund2simple} and assumption \eqref{ass2}
\begin{align}
\|u_h^n& \|^2_{\G{n}{h}}+{2\Delta t}\nu\|\nabla_{\G{}{h}}u_h^n\|^2_{\G{n}{h}}
  +2{\Delta t}\stab\|\bn_h^n \cdot \nabla u_h^n\|_{\O(\G{n}{h})}^2 \\
  & \le\|u_h^{n-1}\|^2_{\G{n}{h}}+ 2 \xi_h \Delta t\|u_h^n\|^2_{\G{n}{h}}  \nonumber \\
  & \le (1 + c_{\ref{lem2}a} \Delta t \|u_h^{n-1}\|^2_{\G{n-1}{h}}) + \Delta t \stabold  \|\bn_h^{n-1} \cdot \nabla u_h^{n-1} \|_{\O(\G{n-1}{h})}^2 + 2 C_0 \Delta t\|u_h^n\|^2_{\G{n}{h}} \nonumber
\end{align}
where the constants $c_{\ref{lem2}a}$ and $C_0$ are independent of $h$, $\Delta t$ and $n$.
We define $c^\ast = c_{\ref{lem2}a} + 2 C_0$ and sum up the inequalities for $n=1,\dots,k$ to get
\begin{multline*}
(1-\Delta t  c^\ast) \|u_h^k\|^2_{\G{k}{h}}+\Delta t\sum_{n=1}^{k}\left(2\nu\|\nabla_{\G{}{h}}u_h^n\|^2_{\G{n}{h}}
+\stab\|\bn_h^n \cdot \nabla u_h^n\|_{\O(\G{n}{h})}^2\right)\\
\le \|u_h^{0}\|^2_{\G{n-1}{}} +\tilde\rho_0\|\bn_h^0 \cdot \nabla u_h^0\|_{\O(\G{0}{h})}^2
+ \Delta t \sum_{n=0}^{k-1} c^\ast  \|u_h^n\|^2_{\G{n}{h}}.
\end{multline*}
Finally, we apply the discrete Gronwall inequality with $\Delta t \leq (2 c^\ast)^{-1}$ to get \eqref{FE_stab1} with $c_{\ref{Th1}} = 2 c^\ast$.
\end{proof}

Now we are ready to devise an error estimate in the energy norm. The proof of the error estimate combines the arguments we used for stability analysis in section~\ref{s_stab} with geometric  and interpolation
error estimates. The geometric  and interpolation error estimates are treated at each time instances $t_n$ for `stationary' surfaces $\G{n}{h}$ and so the developed analysis (cf.~\cite{reusken2015analysis,olshanskii2016trace}) is of help. We start with consistency estimate for \eqref{e:traceFEM1}.

\subsection{Consistency estimate} \label{s:consistency}
While stability analysis dictated us the lower bound \eqref{cond3} for $\stab$, we shall see that the consistency and error analysis leads to a similar natural \emph{upper} bound:
\begin{equation}\label{cond5}\asslabel
 \stab \lesssim (h+\delta_n)^{-1}.
\end{equation}
We assume \eqref{cond5} for the rest of section~\ref{s:Analysis}. Furthermore, in the consistency and error bounds we shall need estimates on derivatives of the solution $u$ in the strip $\O(\Gs)$. By differentiating the identity $u(\bx,t)=u(\bp(\bx),t)$, $(\bx,t)\in\O(\Gs)$, $k\ge 0$ times one finds  that  for $C^{k+1}$-smooth  manifold $\Gs$ the following bound holds:
\begin{equation}\label{u_bound_a}
\|u\|_{W^{k,\infty}(\O(\Gs))}\lesssim \|u\|_{W^{k,\infty}(\Gs)}.
\end{equation}
With a little bit more calculations, see, for example,  \cite[Lemma 3.1]{reusken2015analysis}, one also finds \begin{equation}\label{u_bound_b}
 \|u\|_{H^{k}(U_\ep(\Gamma(t)))}\lesssim \ep^{\frac12}\|u\|_{H^{k}(\Gamma(t))}
\end{equation}
for $t\in[0,T]$ and any such $\ep>0$ that $U_\ep(\Gamma(t))\subset \O(\Gamma(t))$, where $U_\ep(\Gamma(t))$ is the $\ep$-neighborhood in $\mathbb{R}^3$.

We next observe that the smooth solution $u^n=u(t_n)$ of \eqref{transport1} satisfy the identities
\begin{equation}\label{e:consist}
\int_{\G{n}{h}}\left(\frac{u^n-u^{n-1}}{\Delta t} \right) v_h\,ds+ a_n(u^n,v_h) = \consist(v_h),\quad \forall~v_h\in V^n_h,
\end{equation}
with $a_n(\cdot,\cdot)$ as in \eqref{e:def:an} and $\consist(v_h)$ collecting consistency terms due to geometric errors and time derivative approximation, i.e.
\begin{equation*}
\begin{split}
\consist(v_h)&=
\underset{I_1}{\underbrace{\int_{\G{n}{h}}\left(\frac{u^n-u^{n-1}}{\Delta t}\right) v_h\, ds_h-\int_{\G{n}{}}u_t(t_n) v_h^\ell\, ds}}
+
\underset{I_2}{\underbrace{\stab\int_{\O(\G{n}{h})}((\bn_h^n-\bn^n)\cdot\nabla u^n)(\bn_h^n\cdot\nabla v_h) d\bx}}\\
&\quad
+\underset{I_{3,a}}{\underbrace{
    \frac12 \int_{\G{n}{h}} \wte\cdot\nabla_{\G{}{h}} u^n v_h - \wte\cdot\nabla_{\G{}{h}} v_h u^n\, ds_h
    - \frac12 \int_{\G{n}{}} \bw \cdot \nabla u^n v_h^\ell - \bw \cdot \nabla v_h^\ell u^n \, ds}}\\
&\quad
+\underset{I_{3,b}}{\underbrace{\int_{\G{n}{h}} \Div_{\G{}{h}} (\bw^e-\frac12 \wte) u^n v_h\, ds_h-\int_{\G{n}{}}\ \Div_{\Gamma} (\bw - \frac12 \wt) u^n v_h^\ell\, ds}}\\
&\quad
+\underset{I_4}{\underbrace{\nu\int_{\G{n}{h}}\nabla_{\G{}{h}}u^n\cdot\nabla_{\G{}{h}} v_h\, ds_h - \nu\int_{\G{n}{}}\nabla_{\Gamma}u^n\cdot\nabla_{\Gamma} v_h^\ell\, ds}}.
\end{split}
\end{equation*}
We give the estimate for consistency terms in the following lemma.

\begin{lemma}\label{l_consist} Assume $u\in W^{2,\infty}(\Gs)$,
then consistency error has the bound
\begin{equation}\label{est:consist}
|\consist(v_h)|\lesssim (\Delta t+h^q) \norm{u}_{W^{2,\infty}(\Gs)}\left(\|v_h\|_{\G{n}{h}}+\nu^{\frac12}\|\nabla_{\Gamma} v_h\|_{\G{n}{h}} +\stab^{\frac12}\|(\bn_h^n\cdot\nabla v_h)\|_{\O(\G{n}{h})}\right).
\end{equation}
\end{lemma}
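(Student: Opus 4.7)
The plan is to split $\consist(v_h) = I_1 + I_2 + I_{3,a} + I_{3,b} + I_4$ as in the definition of $\consist$, estimate each piece separately, and then combine the bounds. Throughout, the key tools will be: Taylor expansion in time for $I_1$; the normal approximation bound $\Vert \bn_h^n - \bn^n \Vert_{\infty,\G{n}{h}} \lesssim h^q$ from \eqref{eq:normals}; the surface measure ratio $|\mu^n_h - 1| \lesssim h^{q+1}$ for $\mu_h^n$ defined via $\mu_h^n(\bx) ds_h^n = ds^n(\bp^n(\bx))$ (cf. Lemma~\ref{lem2a} and \cite[Proposition 2.1]{Demlow06}); the narrow band volume bound \eqref{u_bound_b} with $\ep \simeq \delta_n + h$; and the neighborhood bound \eqref{u_bound_a}.

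For $I_1$, I would insert $\pm \int_{\G{n}{h}} u_t(t_n) v_h\,ds_h$. The first half becomes $\int_{\G{n}{h}} [\Delta t^{-1}(u^n-u^{n-1}) - u_t(t_n)] v_h\,ds_h$, and a first-order Taylor expansion in time of the extended $u$ at fixed $\bx \in \G{n}{h}$ bounds the bracket by $\Delta t \Vert u_{tt}\Vert_{\infty,\O(\Gs)} \lesssim \Delta t \Vert u\Vert_{W^{2,\infty}(\Gs)}$ (using \eqref{u_bound_a}). The second half is the geometric error $\int_{\G{n}{h}} u_t(t_n) v_h\,ds_h - \int_{\G{n}{}} u_t(t_n) v_h^\ell\,ds$, which is controlled via $|\mu_h^n-1|\lesssim h^{q+1}$ and a Cauchy–Schwarz/equivalence-of-norms argument ($\Vert v_h\Vert_{\G{n}{h}} \simeq \Vert v_h^\ell\Vert_{\G{n}{}}$, cf. \eqref{eq:equivsurf}). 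Together, $|I_1|\lesssim (\Delta t + h^{q+1}) \Vert u\Vert_{W^{2,\infty}(\Gs)} \Vert v_h\Vert_{\G{n}{h}}$.

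For $I_2$, Cauchy–Schwarz and \eqref{eq:normals} give
\[
|I_2| \leq \stab\, \Vert \bn_h^n - \bn^n\Vert_{\infty,\G{n}{h}} \Vert \nabla u^n\Vert_{\O(\G{n}{h})} \Vert \bn_h^n\cdot\nabla v_h\Vert_{\O(\G{n}{h})} \lesssim \stab\, h^q \Vert \nabla u^n\Vert_{\O(\G{n}{h})} \Vert \bn_h^n\cdot\nabla v_h\Vert_{\O(\G{n}{h})}.
\]
Using \eqref{u_bound_b} (applied to $\nabla u^n$ on a band of width $\simeq \delta_n+h$) yields $\Vert \nabla u^n\Vert_{\O(\G{n}{h})}\lesssim (\delta_n+h)^{1/2}\Vert u\Vert_{W^{1,\infty}(\Gs)}$, and then condition \eqref{cond5}, which gives $\stab^{1/2}(\delta_n+h)^{1/2}\lesssim 1$, factors out one $\stab^{1/2}$ to leave $|I_2|\lesssim h^q \stab^{1/2} \Vert u\Vert_{W^{1,\infty}(\Gs)} \Vert \bn_h^n\cdot \nabla v_h\Vert_{\O(\G{n}{h})}$.

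Terms $I_{3,a}$, $I_{3,b}$ and $I_4$ are purely geometric errors. For each of them I would use the parametrization of $\G{n}{h}$ over $\G{n}{}$ via the closest-point projection $\bp^n$, so that $ds_h^n = (\mu_h^n)^{-1} \,ds^n$ with $|(\mu_h^n)^{-1}-1|\lesssim h^{q+1}$, together with the relation between the tangential projectors $P_h = I-\bn_h^n(\bn_h^n)^T$ and $P=I-\bn^n(\bn^n)^T$, for which $\Vert P_h - P\Vert_{\infty,\G{n}{h}}\lesssim h^q$ by \eqref{eq:normals}. This is the by-now-standard TraceFEM geometric error technique (cf.~\cite{reusken2015analysis,olshanskii2016trace}). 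After integration by parts of the surface convection on $\G{n}{}$ one uses $\bw = \wt + \wn \bn$ with $\bn\cdot \nabla u^n = 0$ and the analogous identity on $\G{n}{h}$ modulo the $O(h^q)$ normal error; rearranging gives
\[
|I_{3,a}|+|I_{3,b}|+|I_4|\,\lesssim\, h^q \Vert u\Vert_{W^{2,\infty}(\Gs)} \big(\Vert v_h\Vert_{\G{n}{h}} + \nu^{1/2}\Vert \nabla_{\Gamma_h}v_h\Vert_{\G{n}{h}}\big),
\]
where the $\nu^{1/2}$ arises by splitting the $\nu$ factor in $I_4$ symmetrically between trial and test, and the $L^\infty$ bound on second derivatives of $u$ provides the needed $W^{2,\infty}$ norm. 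Summing the four contributions and using $h^{q+1}\leq h^q$ produces \eqref{est:consist}.

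The only step that requires care is $I_2$: without the upper stabilization bound \eqref{cond5} the $\stab$-weighted normal-derivative control would not close, and this is where conditions \eqref{cond3} and \eqref{cond5} combine to determine the ``correct'' scaling $\stab \simeq (h+\delta_n)^{-1}$. The other pieces are routine, so the main technical task is bookkeeping the $L^\infty$, surface-$L^2$, and volume-$L^2$ norms of $u$, $\nabla u$, and $\nabla_\Gamma u$ uniformly in $n$.
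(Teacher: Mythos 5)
Your proposal is correct and follows essentially the same route as the paper: the identical term-by-term split $I_1,\,I_2,\,I_{3,a},\,I_{3,b},\,I_4$, Taylor expansion in time plus the $O(h^{q+1})$ geometric error for $I_1$, and, crucially, the same use of \eqref{u_bound_b} together with the upper bound \eqref{cond5} on $\stab$ to close the estimate for $I_2$, with the remaining terms handled by the standard projector/measure-ratio TraceFEM arguments (as in \cite{gross2015trace,reusken2015analysis}). The only detail your sketch glosses over is that in $I_1$ the geometric part also involves the mismatch between $u_t$ evaluated on $\G{n}{h}$ and $u_t\circ\bp$ (the paper uses $u^e_t(\bx,t)=u_t(\bp(\bx),t)-\phi(\bx,t)\bn_t\cdot\nabla_{\Gamma}u(\bp(\bx),t)$), but this is easily supplied from $u\in W^{2,\infty}(\Gs)$ and $\|\phi\|_{\infty,\G{n}{h}}\lesssim h^{q+1}$, so it does not affect the result.
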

\begin{proof} We treat $\consist(v_h)$ term by term, starting with $I_1$:
\[
I_1= - \int_{\G{n}{h}}\int_{t_{n-1}}^{t_n}\frac{t-t_{n-1}}{\Delta t} u_{tt} \,dt\,v_h\, ds+\int_{\G{n}{h}}u_t(t_n) v_h\, ds-\int_{\G{n}{}}u_t(t_n) v_h^\ell\, ds.
\]
We have
\[
\left| - \int_{\G{n}{h}}\int_{t_{n-1}}^{t_n}u_{tt} \frac{t-t_{n-1}}{\Delta t}\,dt\,v_h\, ds\right|\le \frac12 \Delta t\|u_{tt}\|_{\infty,\O(\Gs)}\|v_h\|_{L^1(\G{n}{h})}\lesssim \Delta t\|u\|_{W^{2,\infty}(\Gs)}\|v_h\|_{\G{n}{h}},
\]
and using $u^e_t(\bx,t)=u_t(\bp(\bx),t)-\phi(\bx,t)\bn_t\cdot\nabla_{\Gamma} u(\bp(\bx),t)$ (cf. (6.8) in \cite{olshanskii2014error}),
\begin{align*}
  \int_{\G{n}{h}}u_t(t_n) v_h\, ds_h-\int_{\G{n}{}}u_t(t_n) v_h^\ell\, ds
  & = \int_{\G{n}{h}}\left( (u_t \circ \bp) (1-\mu_h) - \phi \bn_t \cdot \nabla_{\Gamma} u \circ \bp \right)v_h\, ds_h\\
  & \lesssim h^{q+1}(\|u_t\|_{\G{n}{}}+\|\nabla_{\Gamma} u\|_{\G{n}{}})\|v_h\|_{\G{n}{h}},
\end{align*}
where we used \eqref{eq:dist}, $\|\phi\|_{\infty,\G{n}{h}} \lesssim h^{q+1}$, and $\mu_h^n(\bx)ds_h(\bx) =ds(\bp(\bx))$,  $\bx\in\G{n}{h}$, with $\|1-\mu_h\|_{\infty,\G{n}{h}}\lesssim h^{q+1}$; see, e.g., \cite{reusken2015analysis}.
We now turn to estimating the second term,
\begin{equation*}
\begin{split}
  |I_2|&\le \stab\|(\bn_h^n-\bn^n)\cdot\nabla u^n\|_{\O(\G{n}{h})}\|\bn_h^n\cdot\nabla v_h\|_{\O(\G{n}{h})}\\&\lesssim \stab h^q\|\nabla u^n\|_{\O(\G{n}{h})}\|\bn_h^n\cdot\nabla v_h\|_{\O(\G{n}{h})}
  \lesssim \stab h^{q} (\delta_n + h)^{\frac12}\|\nabla_\Gamma u^n\|_{\G{n}{h}}\|\bn_h^n\cdot\nabla v_h\|_{\O(\G{n}{h})}.
\end{split}
\end{equation*}
In the last inequality we used \eqref{u_bound_b}. Recalling the condition \eqref{cond5} for $\stab$,
we find
$$|I_2|\lesssim  h^{q} \stab^{\frac12}\|\nabla_\Gamma u^n\|_{\G{n}{h}}\|\bn_h^n\cdot\nabla v_h\|_{\O(\G{n}{h})}.$$

The consistency terms $I_{3,a}$, $I_{3,b}$, $I_4$ are standard in TraceFEM on steady surfaces. One has the bounds, see \cite[Lemma 7.4]{gross2015trace} or \cite[Lemma 5.5]{reusken2015analysis},
\[
|I_{3,a}|+|I_4|\lesssim h^{q+1}\left(\|\nabla_\Gamma u^n\|_{\G{n}{}}\|v_h\|_{\G{n}{h}} +\|\nabla_\Gamma u^n\|_{\G{n}{}}\|\nabla_{\G{}{h}}v_h\|_{\G{n}{h}} \right).
\]
We fix  $t=t_n$ and skip the dependence on time in our notation up to the end of the proof.
To handle the term with divergence, introduce orthogonal projectors,
\begin{equation*}
 \bP(\bx):=\bI-\bn^n(\bx)\bn^n(\bx)^T, \quad \hbox{for }\bx\in \O(\Gamma^n),\qquad
 \bP_h(\bx):=\bI-\bn_{h}^n(\bx)\bn_{h}^n(\bx)^T, \quad \hbox{for }\bx\in \G{n}{h}.
\end{equation*}
For the surface divergence one has the following representation:
\begin{equation}\label{e:Marz}
\Div_\Gamma\bw=\operatorname{tr}(\nabla_\Gamma\bw)= \operatorname{tr}(\bP\nabla\bw)~~\text{and}~~
\Div_{\G{}{h}}\bw=\operatorname{tr}(\nabla_{\G{}{h}}\bw)= \operatorname{tr}(\bP_h\nabla\bw).
\end{equation}
Take $\bx\in\G{}{h}$, not lying on an edge. Using  $ \nabla u(\mathbf{x}) = (\mathbf{I}-\phi(\mathbf{x})\mathbf{H})\nabla_{\Gamma} u(\bp(\bx))$,  $\bx\in\O(\G{n}{})$, we obtain
\begin{align*}
 \Div_{\G{}{h}}\bw^e(\bx) & =\operatorname{tr}(\bP_h\nabla\bw^e(\bx))=
\operatorname{tr}\left(\bP_h(\mathbf{I}-\phi(\mathbf{x})\mathbf{H})\nabla_{\Gamma} \bw(\bp(\bx))\right)\\&=
\operatorname{tr}\left(\bP\nabla_{\Gamma} \bw(\bp(\bx))\right)
+\operatorname{tr}\left((\bP_h-\bP)\nabla_{\Gamma} \bw(\bp(\bx))\right)-
\phi(\mathbf{x})\operatorname{tr}\left(\bP_h\mathbf{H}\nabla_{\Gamma} \bw(\bp(\bx))\right)\\&=
\Div_\Gamma\bw(\bp(\bx))
+\operatorname{tr}\left((\bP_h-\bP)\nabla_{\Gamma} \bw(\bp(\bx))\right)-
\phi(\mathbf{x})\operatorname{tr}\left(\bP_h\mathbf{H}\nabla_{\Gamma} \bw(\bp(\bx))\right).
\end{align*}
Thanks to \eqref{eq:dist} and \eqref{eq:normals}, we bound the last two terms at the right-hand side
\[
|\bP_h-\bP|\lesssim h^q,\quad |\phi(\mathbf{x})\bP_h\mathbf{H}|\lesssim h^{q+1}.
\]
We proved the estimate $|\Div_{\G{}{h}}\bw^e-\Div_\Gamma(\bw\circ\bp)|\lesssim h^{q}$ on $\G{n}{h}$.
With the help of this estimate and the similar one  with $\bw$ replaced by $\wt$, we bound $I_{3,b}$ term,
\begin{equation*}
|I_{3,b}|=\left|\int_{\G{n}{h}} \left(\Div_{\G{}{h}} (\bw^e-\frac12 \wte)- \mu_h \Div_{\Gamma} (\bw - \frac12 \wt)\circ\bp\right) u^n v_h\, ds_h\right|\ \lesssim h^q \|u^n\|_{\G{n}{}}\|v_h\|_{\G{n}{h}}.
\end{equation*}
\end{proof}

\begin{remark}\rm \label{rem_cons} The $h$-dependence of the consistency estimate in \eqref{est:consist} is due to the geometric errors. Increasing the accuracy of the surface recovery leads to better consistency in \eqref{est:consist}.
The order of the estimate can be improved with respect to $h$ if more  information about $\Gamma$ is available.
For example, if one can use $(\Div_\Gamma(\bw-\frac12 \wt))^e$ instead of  $\Div_{\G{}{h}}(\bw^e-\frac12 \wte)$ on $\G{}{h}$, then the $O(h^q)$ term on the right-hand side of \eqref{est:consist} is replaced by $O(h^{q+1})$.
\end{remark}

\subsection{Error estimate in the energy norm} \label{sec:aprioriest}
Denote the error function $\err^n=u^n-u^n_h$, $\err^n\in H^1(\O(\G{n}{h}))$. From \eqref{e:traceFEM1} and \eqref{e:consist} we get the error equation,
\begin{equation}\label{e:err}
\int_{\G{n}{h}}\left(\frac{\err^n-\err^{n-1}}{\Delta t} \right) v_h\,ds+ a_n(\err^n,v_h) = \consist(v_h),\quad \forall~v_h\in V^n_h.
\end{equation}
 We let $u_I^n\in V_h^n$ be an interpolant  for $u^n$ in $\O(\G{n}{h})$; we assume $u^n$  sufficiently smooth so that the interpolation is well-defined. Following standard lines of argument, we split $\err^n$ into finite element and approximation parts,
\[
\err^n=\underset{\mbox{$e^n$}}{\underbrace{(u^n-u^n_I)}}+\underset{\mbox{$e^n_h$}}{\underbrace{(u^n_I-u^n_h)}}.
\]
Equation \eqref{e:err} yields
\begin{equation}\label{e:err1}
\int_{\G{n}{h}}\left(\frac{e^n_h-e^{n-1}_h}{\Delta t} \right) v_h\,ds+ a_n(e_h^n,v_h) = \interpol(v_h)+\consist(v_h),\quad \forall~v_h\in V^n_h,
\end{equation}
with the interpolation term
\[
\interpol(v_h)=-\int_{\G{n}{h}}\left(\frac{e^n-e^{n-1}}{\Delta t} \right) v_h\,ds_h- a_n(e^n,v_h).
\]
We give the estimate for interpolation terms in the following lemma.

\begin{lemma}\label{l_interp} Assume $u\in W^{m+1,\infty}(\Gs)$ and $\Gs$ is sufficiently smooth,
then it holds
\begin{equation}\label{est_inter}
  |\interpol(v_h)|\lesssim h^m \, \norm{u}_{W^{m+1,\infty}} \, (\|v_h\|_{\G{n}{h}}+\nu^{\frac12}\|\nabla_{\Gamma_h} v_h\|_{\G{n}{h}}).
\end{equation}
\end{lemma}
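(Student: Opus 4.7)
The plan is to split $\interpol(v_h)$ into its discrete-time-derivative piece $T_1 := -\int_{\G{n}{h}}\Delta t^{-1}(e^n-e^{n-1})v_h\,ds_h$ and the stationary spatial piece $T_2 := -a_n(e^n,v_h)$, and to bound each contribution using standard Lagrange interpolation estimates
\[
  \|u-u_I\|_{L^\infty(\O(\G{n}{h}))} \lesssim h^{m+1}\|u\|_{W^{m+1,\infty}}, \quad \|\nabla(u-u_I)\|_{L^\infty(\O(\G{n}{h}))} \lesssim h^{m}\|u\|_{W^{m+1,\infty}},
\]
together with the geometric bounds from \eqref{phi_h} and \eqref{eq:normals}.

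For $T_1$, I would exploit that the background mesh is time-independent, so the nodal interpolant $I_h$ commutes with $\partial_t$ and $(e^n - e^{n-1})/\Delta t = \Delta t^{-1}\int_{t_{n-1}}^{t_n}(u_t - I_h u_t)(\cdot,s)\,ds$. Applying the pointwise interpolation estimate to $u_t$ and Cauchy--Schwarz on $\G{n}{h}$ then yields a bound of order $h^{m+1}\|u\|_{W^{m+1,\infty}}\|v_h\|_{\G{n}{h}}$, dominated by the target $h^m$ bound.

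The tangential parts of $T_2$ -- advection, zero-order, and tangential diffusion -- will be handled by Cauchy--Schwarz in a standard way. The skew-symmetric advection is split and estimated twice: once pairing $\nabla_{\G{}{h}}e^n$ with $v_h$ (giving $h^m$) and once pairing $\nabla_{\G{}{h}}v_h$ with $e^n$ (giving $h^{m+1}$). The divergence-type zero-order term is bounded directly by $\|e^n\|_{\G{n}{h}}\|v_h\|_{\G{n}{h}}$, and the tangential diffusion is distributed symmetrically as $\nu^{1/2}\|\nabla_{\G{}{h}}e^n\|_{\G{n}{h}}\cdot \nu^{1/2}\|\nabla_{\G{}{h}}v_h\|_{\G{n}{h}}$, matching the right-hand side of \eqref{est_inter}.

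The hard part will be the normal-diffusion stabilization integral over the narrow band $\O(\G{n}{h})$. The crucial point is that $u^n$ is the normal extension of its surface trace, so $\bn^n\cdot\nabla u^n\equiv 0$ in $\O(\G{n}{h})$. Decomposing
\[
  \bn_h^n\cdot\nabla e^n = (\bn_h^n-\bn^n)\cdot\nabla u^n - \bn_h^n\cdot\nabla(u_I^n-u^n)
\]
and using $|\bn_h^n-\bn^n|\lesssim h^q$ from \eqref{eq:normals} together with the pointwise interpolation bound on the gradient gives $\|\bn_h^n\cdot\nabla e^n\|_{L^\infty(\O(\G{n}{h}))}\lesssim(h^{q}+h^m)\|u\|_{W^{m+1,\infty}}$, which is of order $h^m$ under the natural condition $q\ge m$ needed for optimal order. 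Combined with the volume bound $|\O(\G{n}{h})|\lesssim h+\delta_n$ and the upper bound \eqref{cond5} on $\stab$, this yields $\stab^{1/2}\|\bn_h^n\cdot\nabla e^n\|_{\O(\G{n}{h})}\lesssim h^m\|u\|_{W^{m+1,\infty}}$. The remaining factor $\stab^{1/2}\|\bn_h^n\cdot\nabla v_h\|_{\O(\G{n}{h})}$ then has to be absorbed into the surface norms on $v_h$ via finite-element inverse estimates on tetrahedra in $\mathcal{S}(\G{n}{h})$ combined with Theorem \ref{lemcrucial}; the balancing of the two independent small factors $h^q$ (geometric) and $h^m$ (interpolation) against the weight $\stab\simeq(h+\delta_n)^{-1}$, and the transfer back to surface norms, is where the argument demands the most care.
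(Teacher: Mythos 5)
Your treatment of the time-difference term and of the tangential parts of $a_n(e^n,v_h)$ is essentially the paper's argument (the paper routes the time term through Hansbo's trace inequality and $L^2$ volume interpolation plus \eqref{u_bound_b} instead of $L^\infty$ estimates, which is an immaterial difference; note only that $u\in W^{m+1,\infty}(\Gs)$ controls $u_t$ merely in $W^{m,\infty}$, so this term is $O(h^m)$, not $O(h^{m+1})$ — still sufficient). The genuine gap is your last step, the stabilization term. After Cauchy--Schwarz you are left with the factor $\stab^{1/2}\Vert\bn_h^n\cdot\nabla v_h\Vert_{\O(\G{n}{h})}$ and you propose to absorb it into $\Vert v_h\Vert_{\G{n}{h}}+\nu^{1/2}\Vert\nabla_{\G{}{h}}v_h\Vert_{\G{n}{h}}$ by FE inverse estimates plus Theorem \ref{lemcrucial}. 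This cannot work. The inverse inequality gives $\Vert\bn_h^n\cdot\nabla v_h\Vert_{\O(\G{n}{h})}\lesssim h^{-1}\Vert v_h\Vert_{\O(\G{n}{h})}$, and \eqref{fund1} then returns $\Vert v_h\Vert_{\O(\G{n}{h})}\lesssim(\delta_n+h)^{1/2}\Vert v_h\Vert_{\G{n}{h}}+(\delta_n+h)\Vert\bn_h^n\cdot\nabla v_h\Vert_{\O(\G{n}{h})}$; with $\stab^{1/2}\lesssim(\delta_n+h)^{-1/2}$ from \eqref{cond5} this yields $\stab^{1/2}\Vert\bn_h^n\cdot\nabla v_h\Vert_{\O(\G{n}{h})}\lesssim h^{-1}\Vert v_h\Vert_{\G{n}{h}}+\frac{\delta_n+h}{h}\,\stab^{1/2}\Vert\bn_h^n\cdot\nabla v_h\Vert_{\O(\G{n}{h})}$, i.e. the surface part has lost a full power of $h$ (destroying the $h^m$ rate) and the normal-gradient term reappears with coefficient $(\delta_n+h)/h\geq1$ (recall $\delta_n\sim\Delta t\Vert\wn\Vert_\infty$ may exceed $h$), so no absorption is possible. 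In fact no inequality of the form $\Vert\bn_h^n\cdot\nabla v_h\Vert_{\O(\G{n}{h})}\lesssim C(h)\big(\Vert v_h\Vert_{\G{n}{h}}+\Vert\nabla_{\G{}{h}}v_h\Vert_{\G{n}{h}}\big)$ can hold on $V_h^n$: for $m=q=1$ take $v_h=\phi_h^n$, which vanishes identically on $\G{n}{h}$ (so both surface norms vanish) while $\bn_h^n\cdot\nabla v_h=|\nabla\phi_h^n|\simeq1$ in the band — the normal derivative in the band is exactly the quantity that surface data do not control, which is why the stabilization is there in the first place.

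The workable resolution is not to eliminate this factor at all: bound the stabilization part by $\lesssim h^m\norm{u}_{W^{m+1,\infty}}\,\stab^{1/2}\Vert\bn_h^n\cdot\nabla v_h\Vert_{\O(\G{n}{h})}$ and keep this term on the right-hand side, exactly as in the consistency Lemma \ref{l_consist}; in Theorem \ref{Th2} it is then absorbed via Young's inequality by the term $2\Delta t\,\stab\Vert\bn_h^n\cdot\nabla e_h^n\Vert^2_{\O(\G{n}{h})}$ on the left, so nothing downstream changes (the paper's statement of \eqref{est_inter} omits this term and calls the bound for $a_n(e^n,v_h)$ straightforward, but the honest Cauchy--Schwarz argument produces it and it is harmless). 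Two further corrections to your estimate of the $e^n$-factor: you do not need the normal-extension property of $u^n$ at all, since $\Vert\bn_h^n\cdot\nabla e^n\Vert_{\O(\G{n}{h})}\leq\Vert\nabla(u^n-u_I^n)\Vert_{\O(\G{n}{h})}\lesssim h^m(\delta_n+h)^{1/2}\norm{u}_{H^{m+1}(\G{n}{})}$ by volume interpolation and \eqref{u_bound_b}, whence $\stab^{1/2}\Vert\bn_h^n\cdot\nabla e^n\Vert_{\O(\G{n}{h})}\lesssim h^m\norm{u}$ with no $h^q$ contribution and no condition $q\geq m$; the geometric piece $\bn_h^n\cdot\nabla u^n=(\bn_h^n-\bn^n)\cdot\nabla u^n$ belongs to the consistency term $I_2$ of $\consist$, not to $\interpol$. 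Also, as written your decomposition is not an identity: the correct one is $\bn_h^n\cdot\nabla e^n=(\bn_h^n-\bn^n)\cdot\nabla u^n-\bn_h^n\cdot\nabla u_I^n$, whereas your expression double-counts $(\bn_h^n-\bn^n)\cdot\nabla u^n$ (harmless for the size estimate, but it should be fixed).
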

\begin{proof}
We need Hansbo's trace inequality~\cite{Hansbo02},
\begin{equation}\label{trace}
\|v\|_{S\cap\G{n}{h}}\le c(h^{-\frac12}\|v\|_{S}+h^{\frac12}\|\nabla v\|_{S}),\quad ~~v\in H^1(S),~~S\in \mathcal{T}_h^{\Gamma},
\end{equation}
with some $c$ independent of $v$, $T$, $h$, $\G{n}{h}$.
Under mild assumptions on the resolution of the smooth surface $\G{n}{h}$ by the mesh (cf. \cite[Assumption 4.1(A2)]{reusken2015analysis}) the inequality has been proven in \cite[Lemma 4.3]{reusken2015analysis}.
We use interpolation properties of polynomials and their traces. In particular,
\begin{equation}\label{eq:interp}
\min_{v_h\in V^h}\left(\|v^e-v_h\|_{\G{n}{h}}+h\|\nabla(v^e-v_h)\|_{\G{n}{h}}\right)\lesssim h^{m+1}\|v\|_{H^{m+1}(\G{n}{})}\quad \text{for}~v\in H^{m+1}(\G{n}{});
\end{equation}
see, e.g., \cite{gross2015trace,reusken2015analysis,olshanskii2016trace}.
With the help of \eqref{trace} we treat the first term in $\interpol(v_h)$,
\begin{equation}\label{e:err2}
\begin{split}
\left|\int_{\G{n}{h}}\right.&\left.\left(\frac{e^n-e^{n-1}}{\Delta t} \right) v_h\,ds_h\right|\le \left\|\frac{e^n-e^{n-1}}{\Delta t}\right\|_{\G{n}{h}}\|v_h\|_{\G{n}{h}}\\
&\lesssim \left(h^{-\frac12} \Delta t^{-1} \| e^n-e^{n-1}\|_{\O_\Gamma(\G{n}{h})}+h^{\frac12} \Delta t^{-1} \|\nabla (e^n-e^{n-1})\|_{\O_\Gamma(\G{n}{h})}\right)\|v_h\|_{\G{n}{h}}.
\end{split}
\end{equation}
Now, using condition \eqref{cond1} we handle the first term on the right-hand side of \eqref{e:err2},
\begin{align*}
  \| e^n & -e^{n-1} \|^{2}_{\O_\Gamma(\G{n}{h})}
  =\| e(t_n)-e(t_{n-1}) \|_{\O_\Gamma(\G{n}{h})}^2
    = \left\|\int^{t_n}_{t_{n-1}} e_t(t')\,dt'\right\|_{\O_\Gamma(\G{n}{h})}^2
  &
  \\
  &
    \le \Delta t\int^{t_n}_{t_{n-1}}\| e_t(t')\|_{\O_\Gamma(\G{n}{h})}^2\,dt'
    \lesssim |\Delta t|^2\,h^{2m}\sup_{t\in[t_{n-1},t_n]} \| u_t\|_{H^{m}(\O_\Gamma(\G{n}{h}))}^2
  & ({\footnotesize \text{Cauchy-Schwarz and \eqref{eq:interp}}})
  \\
  &
    \lesssim |\Delta t|^2\,h^{2m+1}\sup_{t\in[t_{n-1},t_n]} \| u_t\|_{H^{m}(\G{n}{h})}^2
    \lesssim |\Delta t|^2\,h^{2m+1}\| u\|_{W^{m+1,\infty}(\Gs)}^2.
  &  ({\footnotesize \text{by~\eqref{u_bound_b}}})
\end{align*}
We estimate the second term on the right-hand side of \eqref{e:err2}, using similar arguments,
\[
\|\nabla(e^n-e^{n-1})\|_{\O_\Gamma(\G{n}{h})}\lesssim |\Delta t|^2\,h^{2m-1}\| u\|_{W^{m+1,\infty}(\Gs)}^2.
\]
We handle the term $a_n(e^n,v_h)$ using the Cauchy-Schwarz inequality and interpolation properties of $v_h$ in the straight-forward way. This leads to the estimate
\[
|a_n(e^n,v_h)|\lesssim h^m\|u\|_{H^{m+1}(\G{n}{})}(\|v_h\|_{\G{n}{h}}+\nu^{\frac12}\|\nabla_{\Gamma_h} v_h\|_{\G{n}{h}}).
\]
We summarize the above bounds into the estimate of the interpolation term as in \eqref{est_inter}.
\end{proof}

Now we are prepared to prove the main result of the paper. Let $u_h^0=u_I^0\in V_h^0$ be a suitable interpolant to
$u^0\in \O(\Gamma_h^0)$.


\begin{theorem}\label{Th2}
  Assume \eqref{phi_h}--\eqref{phi_hbb}, \eqref{eq:conddtkappa}--\eqref{cond1},
  \eqref{cond4},
  \eqref{cond2},
  \eqref{cond3}, and
  \eqref{cond5},  and $\Delta t$ is sufficiently small,  $u$ is the  solution to \eqref{transport},  $u\in W^{m+1,\infty}(\Gs)$, $m\geq 1$, $\Gs$ is sufficiently smooth.  
  For  $u_h^n$, $n=1,\dots,N$, the finite element solution of \eqref{e:traceFEM1},
  and $\err^n = u_h^n - u^n$  the following error estimate holds:
  \begin{multline}\label{FE_est1}
    \|\err^n\|^2_{\G{n}{h}}+{\Delta t}\sum_{k=1}^{n}\! \left( \nu\|\nabla_{\G{}{h}} \err^k \|^2_{\G{k}{h}}
      +\stab\|\bn_h^k \cdot \nabla\err_h^k\|_{\O(\G{k}{h})}^2\right)
    \lesssim \exp(c_{\ref{Th2}}t_n) R(u) (\Delta t^2+h^{2\min\{m,q\}}),
  \end{multline}
  with $R(u) := \norm{u}_{W^{m+1,\infty}(\Gs)}^2$ and $c_{\ref{Th2}}$ independent of $h$, $\Delta t$, $n$ and of the position of the surface over the background mesh.
\end{theorem}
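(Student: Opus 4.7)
The plan is to follow the same pattern as the stability estimate in Theorem \ref{Th1}, but applied to the error equation \eqref{e:err1} for the finite-element part $e_h^n = u_I^n - u_h^n$ of the splitting $\err^n = e^n + e_h^n$, with $e^n = u^n - u_I^n$ the interpolation error. Once a bound for $e_h^n$ is obtained in the energy norm, the triangle inequality together with the interpolation estimate \eqref{eq:interp} (combined with \eqref{u_bound_a}--\eqref{u_bound_b} to handle the extension to $\O(\G{n}{h})$) will deliver the claimed bound for $\err^n$.

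The first step is to test \eqref{e:err1} with $v_h = 2\Delta t\, e_h^n$. Using the polarization identity $2(a-b)a = a^2 - b^2 + (a-b)^2$ for the time-difference term and the coercivity bound \eqref{lower} for the bilinear form, one obtains
\begin{equation*}
\|e_h^n\|_{\G{n}{h}}^2 + 2\Delta t\,\nu\|\nabla_{\G{}{h}} e_h^n\|_{\G{n}{h}}^2 + 2\Delta t\,\stab\|\bn_h^n\!\cdot\!\nabla e_h^n\|_{\O(\G{n}{h})}^2
\le \|e_h^{n-1}\|_{\G{n}{h}}^2 + 2\Delta t\,\xi_h\|e_h^n\|_{\G{n}{h}}^2 + 2\Delta t\bigl(\interpol(e_h^n)+\consist(e_h^n)\bigr).
\end{equation*}
The right-hand side term $\|e_h^{n-1}\|_{\G{n}{h}}^2$ is shifted from $\G{n}{h}$ back to $\G{n-1}{h}$ via Lemma \ref{lem2}, which yields
\[
\|e_h^{n-1}\|_{\G{n}{h}}^2 \le (1+c\,\Delta t)\|e_h^{n-1}\|_{\G{n-1}{h}}^2 + \stabold\Delta t\|\bn_h^{n-1}\!\cdot\!\nabla e_h^{n-1}\|_{\O(\G{n-1}{h})}^2,
\]
where we have used the lower bound \eqref{cond3} on $\stabold$ to absorb the volume remainder into the stabilization energy from the previous time step. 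This is the crucial step that makes the scheme unconditionally stable in the appropriate norm: the normal-gradient stabilization from time step $n-1$ controls the mismatch between the surface norms at $t_{n-1}$ and $t_n$.

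Next, the consistency and interpolation residuals are bounded by Lemmas \ref{l_consist} and \ref{l_interp}, yielding factors $(\Delta t + h^q)$ and $h^m$ respectively, multiplied by the energy norm of $e_h^n$. Young's inequality is applied to split these mixed products so that the three small pieces $\|e_h^n\|_{\G{n}{h}}^2$, $\nu\|\nabla_{\G{}{h}} e_h^n\|_{\G{n}{h}}^2$ and $\stab\|\bn_h^n\!\cdot\!\nabla e_h^n\|_{\O(\G{n}{h})}^2$ are absorbed into the left-hand side (with, say, a factor $1/2$ each), while the remaining data terms contribute $(\Delta t + h^q)^2 + h^{2m} \simeq \Delta t^2 + h^{2\min\{m,q\}}$ times $\norm{u}_{W^{m+1,\infty}(\Gs)}^2$.

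Summing the resulting telescoping inequality over $k=1,\dots,n$, using that $\xi_h$ is bounded uniformly by \eqref{ass2} and that the stabilization terms telescope as in the proof of Theorem \ref{Th1}, leaves an inequality of the form
\[
(1-c^\ast\Delta t)\|e_h^n\|_{\G{n}{h}}^2 + \Delta t\sum_{k=1}^n\bigl(\nu\|\nabla_{\G{}{h}} e_h^k\|_{\G{k}{h}}^2 + \stab\|\bn_h^k\!\cdot\!\nabla e_h^k\|_{\O(\G{k}{h})}^2\bigr) \lesssim \|e_h^0\|_{\G{0}{h}}^2 + t_n R(u)(\Delta t^2+h^{2\min\{m,q\}}) + c^\ast\Delta t\sum_{k=0}^{n-1}\|e_h^k\|_{\G{k}{h}}^2.
\]
Discrete Gronwall then absorbs the last sum, and the initial interpolation error $\|e_h^0\|_{\G{0}{h}}^2$ is controlled by the same data quantity $R(u)\,h^{2\min\{m,q\}}$ thanks to our choice $u_h^0 = u_I^0$. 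Finally, the triangle inequality $\|\err^n\|_{\G{n}{h}} \le \|e^n\|_{\G{n}{h}} + \|e_h^n\|_{\G{n}{h}}$ and its gradient analogs, each estimated via \eqref{eq:interp}--\eqref{u_bound_b} and the Hansbo trace inequality \eqref{trace}, yield \eqref{FE_est1}.

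The main technical obstacle is the correct interplay between the shift estimate \eqref{fund2simple} and the lower bound \eqref{cond3} on $\stab$: the volume remainder produced by evaluating $e_h^{n-1}$ on $\G{n}{h}$ must be absorbed by the previous time step's normal-gradient dissipation, not by the current one, in order to get a clean telescoping sum. This forces us to carry the stabilization energy with a time-lag in the bookkeeping and is the reason the stabilization parameter in condition \eqref{cond3} scales as $\|\wn\|_\infty / (\delta_n+h)$ rather than being a simple $O(1)$ quantity.
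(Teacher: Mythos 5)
Your proposal is correct and follows essentially the same route as the paper's own proof: testing the error equation with $2\Delta t\, e_h^n$, using the coercivity bound \eqref{lower}, shifting $\|e_h^{n-1}\|^2_{\G{n}{h}}$ to $\G{n-1}{h}$ via \eqref{fund2simple} so the volume remainder telescopes against the previous step's stabilization, absorbing the consistency and interpolation residuals by Young's inequality, summing and applying discrete Gronwall, and finishing with the triangle inequality and \eqref{eq:interp} (where \eqref{cond5} keeps the $\stab$-weighted interpolation term of order $h^{2m}$). The only cosmetic difference is that with $u_h^0=u_I^0$ one has $e_h^0=0$ exactly, rather than merely bounded by the data term.
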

\begin{proof}
The arguments largely repeat those used to show the stability result in  Theorem~\ref{Th1} and involve estimates from Lemmas~\ref{l_consist} and \ref{l_interp} to bound the  arising right-hand side terms.
We set $v_h=2\Delta t e^n_h$ in \eqref{e:err1}. This gives
\[
\|e_h^n\|^2_{\G{n}{h}}- \|e_h^{n-1}\|^2_{\G{n}{h}} +\|e_h^n-e_h^{n-1}\|^2_{\G{n}{h}}+{2\Delta t} a_n(e_h^n,e_h^n)=2\Delta t(\interpol(e_h^n)+\consist(e_h^n))
\]
Dropping the third term, using the lower bound \eqref{lower} for $a_n$ and estimating  $ \|e_h^{n-1}\|^2_{\G{n}{h}}$  with \eqref{fund2simple} yields
\begin{multline*}
\|e_h^n\|^2_{\G{n}{h}}+{2\Delta t}\nu\|\nabla_{\G{}{h}}e_h^n\|^2_{\G{n}{h}}
+2{\Delta t}{\stab}\|\bn_h^n \cdot \nabla e_h^n\|_{\O(\G{n}{h})}^2\\
\le(1+c_{\ref{lem2}}^\ast \Delta t) \|e_h^{n-1}\|^2_{\G{n-1}{h}}
+ \Delta t \stabold  \|\bn_h^{n-1} \cdot \nabla e_h^{n-1}\|_{\O(\G{n-1}{h})}^2
+ 2\xi_h \Delta t \|e_h^n\|^2_{\G{n}{h}} + 2\Delta t(\interpol(e_h^n)+\consist(e_h^n)).
\end{multline*}
We recall assumption \eqref{ass2} and the defintion $c^\ast = c_{\ref{lem2}}^\ast + 2 C_0$
(cf. the proof of  Theorem~\ref{Th1})
to obtain
\begin{multline}\label{aux8}
(1- c^\ast \Delta t)\|e_h^n\|^2_{\G{n}{h}}+{2\Delta t}\nu\|\nabla_{\G{}{h}}e_h^n\|^2_{\G{n}{h}}
+2{\Delta t}{\stab}\|\bn_h^n \cdot \nabla e_h^n\|_{\O(\G{n}{h})}^2\\
\le(1+ c^\ast \Delta t)\|e_h^{n-1}\|^2_{\G{n-1}{h}}
+\Delta t\stabold \|\bn_h^{n-1} \cdot \nabla e_h^{n-1}\|_{\O(\G{n-1}{h})}^2
+ 2\Delta t(\interpol(e_h^n)+\consist(e_h^n)).
\end{multline}
To estimate the interpolation and consistency terms, we apply Young's inequality to the right-hand sides of \eqref{est:consist} and \eqref{est_inter} yielding
\begin{align*}
  2 \Delta t\consist(e_h)
  &\le c\,\Delta t (\Delta t^2+h^{2q}) \norm{u}_{W^{2,\infty}(\Gs)}^2
  &&
    +\frac{\Delta t}{2}\left(\|e_h^n\|_{\G{n}{h}}^2+\nu\|\nabla_{\Gamma_h} e_h^n\|_{\G{n}{h}}^2 +\stab\|(\bn_h^n\cdot\nabla e_h^n)\|_{\O(\G{n}{h})}^2\right),\\
  2 \Delta t\interpol(e_h)
  & \le c\,\Delta t ~ h^{2m} \norm{u}_{W^{m+1,\infty}(\Gs)}^2
  &&
    +\frac{\Delta t}{2}\left(\|e_h^n\|_{\G{n}{h}}^2+\nu\|\nabla_{\Gamma_h} e_h^n\|_{\G{n}{h}}^2\right),
\end{align*}
with a constant $c$ independent of $h$, $\Delta t$, $n$ and of the position of the surface over the background mesh. Substituting this in \eqref{aux8} and
summing up the resulting inequalities for $n=1,\dots,k$ and noting $e^0_h=0$ in $\O(\Gamma_h^0)$ we get
\begin{multline*}
(1- (c^\ast+1) \Delta t  )\|e_h^k\|^2_{\G{k}{h}}+\Delta t\sum_{n=1}^{k}\left(\nu\|\nabla_{\G{}{h}}e_h^n\|^2_{\G{n}{h}}
+\stab\|\bn_h^n \cdot \nabla e_h^n\|_{\O(\G{n}{h})}^2\right)\\
\le 
\Delta t \sum_{n=0}^{k-1} c^\ast  \|e_h^n\|^2_{\G{n}{h}} + c\,\norm{u}_{W^{m+1,\infty}}^2 (\Delta t^2+h^{2q}+h^{2m}).
\end{multline*}
We apply the discrete Gronwall inequality with $\Delta t \leq (2+2c^\ast)^{-1}$ to get
\begin{align}
    \|e_h^k & \|^2_{\G{k}{h}}+\sum_{n=1}^{k}{\Delta t} \left( \nu\|\nabla_{\G{}{h}}e_h^n\|^2_{\G{n}{h}}
              +\stab\|\bn_h^n \cdot \nabla e_h^n\|_{\O(\G{n}{h})}^2\right) \\
              & \lesssim \exp(c_{\ref{Th2}} t_k)
                \norm{u}_{W^{m+1,\infty}(\Gs)}^2 (\Delta t^2+h^{2\min\{m,q\}}) =: \exp(c_{\ref{Th2}} t_k) Q_{e},
                \nonumber
\end{align}
with $c_{\ref{Th2}} = 2 (c^\ast+1)$.
The triangle inequality, standard FE interpolation properties, \eqref{eq:interp} and \eqref{cond5} give
\begin{equation*}
\begin{split}
    \|\err^k\|^2_{\G{k}{h}}&+\sum_{n=1}^{k}{\Delta t} \left( \nu\|\nabla_{\G{}{h}}\err^n\|^2_{\G{n}{h}}
      +\stab\|\bn_h^n \cdot \nabla \err^n\|_{\O(\G{n}{h})}^2\right)
    \\ &
    \leq
    Q_{e} +\|e^k\|^2_{\G{k}{h}}+\sum_{n=1}^{k}{\Delta t} \left( \nu\|\nabla_{\G{}{h}}e^n\|^2_{\G{n}{h}}
      +\stab\|\bn_h^n \cdot \nabla e^n\|_{\O(\G{n}{h})}^2\right)
    \\ &
    \lesssim Q_{e}+ \norm{u}_{H^{m+1}(\G{k}{})} h^{2m} \underbrace{( 1 +\stab(\delta_n+h))}_{\lesssim 1}.
\end{split}
\end{equation*}
  This completes the proof.
\end{proof}

\begin{remark}[Extension of the analysis to BDF2] \label{rem:bdf2} \rm
The method is extendable to higher order time stepping methods. To keep the analysis manageable, we restricted to the backward Euler discretization. Here, we briefly summarize what needs to be considered for an extension of the analysis to higher order schemes. We consider the BDF2 schemes here. Obviously the finite difference stencil for the time derivative is changed from $\frac{u^n-u^{n-1}}{\Delta t}$ to $\frac{3u^n-4u^{n-1}+u^{n-2}}{2\Delta t} $ in the semi-discrete method in
\eqref{e:ImEuler},\eqref{e:conttraceFEM1} and for the fully discrete method in \eqref{e:traceFEM1}.
Accordingly, the layer width of the extension has be increased so that $\G{n}{} \subset \mathcal{O}(\G{n-1}{}) \cap \mathcal{O}(\G{n-2}{})$ and $\G{n}{h} \subset \mathcal{O}(\G{n-1}{h}) \cap \mathcal{O}(\G{n-1}{h})$. To this end, we have to change $\delta_n$ in \eqref{e:delta} to $\delta_n = 2 c_\delta \sup_{t\in[t_{n-2},t_n]} \Vert \wn \Vert_{L^{\infty}(\Gamma(t))} \Delta t$. Further, in the proof of the coercivity in the (spatially) continuous and discrete setting we have to change the time step restrictions \eqref{e:xi} and \eqref{cond2} according to the changed coefficient in the BDF formula. The Gronwall-type arguments in section \ref{s:stab:semi-disc} and in Theorem \ref{Th1} have to be replaced with corresponding versions for the BDF scheme. To handle the time derivative terms, a special
norm should be used~\cite{hairer1996solving}, which is a linear combination of $L^2$ surface norm at $n$ and $n-1$ time steps. Finally, the consistency analysis in section \ref{s:consistency} can then be improved, specifically the term $I_1$ leading to a higher order (in $\Delta t$) estimate in Lemma \ref{l_consist} and Theorem \ref{Th2}.
\end{remark}

\section{Algebraic stability} \label{s:algebra}
In every time step we have to solve a linear system of the form
$$
  \bA \bx = \bbf \quad \text{ with } \bA \in \rr^{N\times N} , \bbf, \bx \in  \rr^{N},
$$
where $N = \operatorname{dim}(V_h^n)$, $\bA$ and $\bbf$ are the matrix and vector corresponding to the involved bilinear form and the right-hand side linear form, whereas $\bx$ is the solution vector.
We split the left-hand side bilinear form into its symmetric and skew-symmetric part and define
\begin{subequations}
\begin{align}
  A_n(u,v) := & B_n(u,v) + C_n(u,v) \quad \big(= \int_{\G{n}{h}} \frac{1}{\Delta t} u v ~ ds + a_n(u,v)\big), && u,v \in V_h^n, \\
  B_n(u,v) := & \int_{\G{n}{h}} (\frac{1}{\Delta t} + \Div_{\G{}{h}} (\bw^e - \frac12 \wte)) u v ~ ds
 \\ & +\nu\int_{\G{n}{h}}(\nabla_{\G{}{h}}u)\cdot(\nabla_{\G{}{h}}v)\, ds+\stab\int_{\O(\G{n}{h})}(\bn_h^n\cdot\nabla u)(\bn_h^n\cdot\nabla v) d\bx, && u,v \in V_h^n, \nonumber
  \\
  C_n(u,v) := & \int_{\G{n}{h}} \frac12 (\wte\cdot\nabla_{\G{}{h}} u) v - \frac12 (\wte\cdot\nabla_{\G{}{h}} v) u ~ ds, && u,v \in V_h^n.
\end{align}
\end{subequations}
Correspondingly we denote by $\bB$ and $\bC \in \rr^{N\times N}$ the matrices to the bilinear forms $B_n$ and $C_n$.

To bound the spectral condition number of $\bA$, we use the following result~ \cite[Theorem 1]{ElmanSchultz86}:
\begin{lemma}\label{lem:elman}
  With $\bA \in \rr^{N \times N}$, $\bB = \frac12( \bA + \bA^T)$ and $\bC = \frac12( \bA - \bA^T)$, for the spectral condition number of $\bA$ there holds
  \begin{equation}
    \kappa(\bA) \leq \frac{\lambda_{\max}(\bB) + \rho(\bC)}{\lambda_{\min}(\bB)}
  \end{equation}
  where $\rho(\cdot)$ denotes the spectral radius and $\lambda_{\max}(\bB)$ and $\lambda_{\min}(\bB)$ are the largest and smallest eigenvalues of the symmetric and positive definite matrix $\bB$.
\end{lemma}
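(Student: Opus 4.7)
The plan is to bound $\kappa(\bA) = \Vert \bA \Vert_2 \, \Vert \bA^{-1} \Vert_2$ by treating the numerator and denominator separately, exploiting the decomposition $\bA = \bB + \bC$ into its symmetric ($\bB$) and skew-symmetric ($\bC$) parts.

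First, I would bound $\Vert \bA \Vert_2$ from above. By the triangle inequality for the operator norm, $\Vert \bA \Vert_2 \leq \Vert \bB \Vert_2 + \Vert \bC \Vert_2$. Since $\bB$ is symmetric positive definite, $\Vert \bB \Vert_2 = \lambda_{\max}(\bB)$. For the skew-symmetric part, I would note that the eigenvalues of $\bC$ are purely imaginary, say $\{i\mu_j\}$ with $\mu_j\in\mathbb{R}$, so the eigenvalues of the symmetric positive semidefinite matrix $\bC^T\bC = -\bC^2$ are $\{\mu_j^2\}$, which gives $\Vert \bC \Vert_2 = \max_j \vert \mu_j \vert = \rho(\bC)$. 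Combining these two identities yields $\Vert \bA \Vert_2 \leq \lambda_{\max}(\bB) + \rho(\bC)$.

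Next, I would bound $\Vert \bA^{-1} \Vert_2$ from above. The key observation is that $\bx^T \bC \bx = 0$ for every $\bx \in \mathbb{R}^N$ (since $\bC^T = -\bC$), so $\bx^T \bA \bx = \bx^T \bB \bx \geq \lambda_{\min}(\bB)\,\Vert \bx \Vert_2^2$. Pairing this with the Cauchy--Schwarz estimate $\bx^T \bA \bx \leq \Vert \bA \bx \Vert_2 \, \Vert \bx \Vert_2$, I obtain the coercivity bound $\Vert \bA \bx \Vert_2 \geq \lambda_{\min}(\bB) \,\Vert \bx \Vert_2$ for all $\bx$, which immediately gives $\Vert \bA^{-1} \Vert_2 \leq \lambda_{\min}(\bB)^{-1}$. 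Since $\bB$ is assumed to be positive definite (which is ensured earlier in the paper via \eqref{coer}), this inverse is well-defined.

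Multiplying the two bounds produces the stated estimate. The only mildly non-routine point is the identity $\Vert \bC \Vert_2 = \rho(\bC)$ for a skew-symmetric real matrix, but this is handled cleanly by passing to $\bC^T\bC$ as above; the rest is the standard field-of-values argument for the coercivity of $\bA$ inherited from its symmetric part.
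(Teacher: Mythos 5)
You should note that the paper itself gives no proof of this lemma: it is quoted directly from the literature (Theorem 1 of the cited Elman--Schultz reference), so there is no in-paper argument to compare against. Your self-contained proof is correct and is, in essence, the standard argument behind that cited result. The two halves are sound: the bound $\Vert\bA\Vert_2\le\Vert\bB\Vert_2+\Vert\bC\Vert_2=\lambda_{\max}(\bB)+\rho(\bC)$ uses that the real skew-symmetric $\bC$ is normal, so its spectral norm equals its spectral radius (your detour through $\bC^T\bC=-\bC^2$ is a clean way to justify this, though you could also invoke normality directly, which is the same fact the paper's Lemma \ref{lem:rhom} uses when it writes $\rho(\bC)$ as a maximum of Rayleigh quotients); and the coercivity estimate $\bx^T\bA\bx=\bx^T\bB\bx\ge\lambda_{\min}(\bB)\Vert\bx\Vert_2^2$ combined with Cauchy--Schwarz gives $\Vert\bA\bx\Vert_2\ge\lambda_{\min}(\bB)\Vert\bx\Vert_2$, hence invertibility of $\bA$ and $\Vert\bA^{-1}\Vert_2\le\lambda_{\min}(\bB)^{-1}$; multiplying the two bounds yields the claim for $\kappa(\bA)=\Vert\bA\Vert_2\Vert\bA^{-1}\Vert_2$. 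What your write-up buys compared with the paper's approach is self-containedness: the lemma no longer rests on an external citation, at the modest cost of a few lines of standard linear algebra.
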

To estimate $\kappa(\bA)$, we derive bounds for $\rho(\bC)$, $\lambda_{\min}(\bB)$ and $\lambda_{\max}(\bB)$ in the next two lemmas.
\begin{lemma} \label{lem:rhom}
  There holds
    $\rho(\bC) \lesssim \Vert \bw \Vert_{\infty} h^{d-2}$.
\end{lemma}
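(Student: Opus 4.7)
The plan is to exploit the fact that for the real skew-symmetric matrix $\bC$ the spectral radius equals its Euclidean operator norm (since its eigenvalues are purely imaginary and $\bC^T\bC = -\bC^2$ has eigenvalues $\mu_j^2$, where $\pm i\mu_j$ are the eigenvalues of $\bC$). Therefore it suffices to bound
\[
\rho(\bC) \;=\; \sup_{u,v\in V_h^n\setminus\{0\}} \frac{|C_n(u,v)|}{\|\bu\|_2 \, \|\bv\|_2}
\]
by $\|\bw\|_\infty h^{d-2}$, where $\bu,\bv\in\rr^N$ denote the coefficient vectors of $u,v$ in a standard nodal basis of $V_h^n$.

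First I would apply the Cauchy--Schwarz inequality directly to the definition of $C_n$, noting that $|\wte|\lesssim \|\bw\|_\infty$, to get
\[
|C_n(u,v)| \;\lesssim\; \|\bw\|_\infty \bigl( \|\nabla_{\Gamma_h} u\|_{\G{n}{h}}\, \|v\|_{\G{n}{h}} + \|\nabla_{\Gamma_h} v\|_{\G{n}{h}}\, \|u\|_{\G{n}{h}} \bigr).
\]
Next, I would combine Hansbo's discrete trace inequality \eqref{trace} with the standard finite element inverse estimate $\|\nabla w\|_S \lesssim h^{-1}\|w\|_S$ applied element by element on $S\in \T_\Gamma^n$. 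For any $w \in V_h^n$, this yields
\[
\|w\|_{\G{n}{h}}^2 \;\lesssim\; h^{-1}\|w\|_{\O_\Gamma(\G{n}{h})}^2, \qquad \|\nabla w\|_{\G{n}{h}}^2 \;\lesssim\; h^{-3}\|w\|_{\O_\Gamma(\G{n}{h})}^2,
\]
and hence $\|\nabla_{\Gamma_h} u\|_{\G{n}{h}} \|v\|_{\G{n}{h}} \lesssim h^{-2}\|u\|_{\O_\Gamma(\G{n}{h})} \|v\|_{\O_\Gamma(\G{n}{h})}$.

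Finally, using the standard norm equivalence on a quasi-uniform shape-regular family of triangulations in $\rr^d$ between the $L^2$-norm of a finite element function and the scaled $\ell^2$-norm of its nodal coefficient vector,
\[
\|w\|_{\O_\Gamma(\G{n}{h})}^2 \;\lesssim\; h^d \, \|\bw\|_2^2,
\]
I would combine the two estimates to arrive at $|C_n(u,v)| \lesssim \|\bw\|_\infty h^{d-2}\, \|\bu\|_2 \|\bv\|_2$, whence $\rho(\bC) \lesssim \|\bw\|_\infty h^{d-2}$. The only subtlety is that $V_h^n$ lives on the larger neighborhood $\O(\G{n}{h})$ while only degrees of freedom supported in $\O_\Gamma(\G{n}{h})$ contribute to $C_n$; this is harmless, since coefficients of the remaining basis functions enter $\|\bu\|_2,\|\bv\|_2$ in the denominator but not into $C_n$, so the supremum is taken over a superset and the bound still holds. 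The main quantitative ingredient, and the step that could potentially be technical, is the uniformity of the trace and inverse constants with respect to how the surface cuts the mesh, which is exactly what \eqref{trace} provides.
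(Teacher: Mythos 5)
Your proposal is correct and follows essentially the same route as the paper: skew-symmetry (normality) of $\bC$ to reduce $\rho(\bC)$ to a bound on the convection form, Cauchy--Schwarz with $|\wte|\lesssim\Vert\bw\Vert_\infty$, Hansbo's trace inequality combined with FE inverse estimates to pass from $\G{n}{h}$ to the cut-element neighborhood with an $h^{-2}$ factor, and the $L^2$-to-$\ell^2$ norm equivalence $\Vert w\Vert^2\simeq h^d\Vert\bw\Vert_2^2$. The only cosmetic difference is that you estimate the bilinear form over pairs $(u,v)$ of real coefficient vectors, whereas the paper bounds the quadratic form $\bx^T\bC\bx$ over complex vectors; both are valid realizations of $\rho(\bC)=\Vert\bC\Vert_2$ for a normal matrix.
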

\begin{proof}
  We note that $\bC$ is skew-symmetric and hence a normal matrix. Thus, we have
  \begin{equation}
    \rho(\bC) = \max_{\bx \in \cc^{N}} \frac{\bx^T \bC \bx}{\bx^T \bx}.
  \end{equation}
  Now let $\bx \in \cc^n$ and $v$ be the corresponding finite element function in $V_h + i V_h$ (where $i$ is the imaginary unit), then we have
  \begin{equation} \label{eq:xCxest}
\hspace*{-0.1cm}    \bx^T \bC \bx = C_n(v,v) \leq  \Vert \bw \Vert_{\infty}  \Vert \nabla_{\G{}{}} v \Vert_{\G{}{h}} \Vert v \Vert_{\G{}{h}}
    \lesssim \Vert \bw \Vert_{\infty} h^{-2} \Vert v \Vert_{\O(\G{n}{h})}
    \simeq
    \Vert \bw \Vert_{\infty} h^{1} \Vert \bx \Vert_2^2,
  \end{equation}
  where we made use of inverse inequalities and $\Vert v \Vert_{\O(\G{n}{h})} \simeq h^3 \Vert \bx \Vert_2^2$.
\end{proof}

\begin{lemma} \label{lem:lambdas}
  Under conditions \eqref{eq:conddtkappa} and \eqref{cond2}, there holds
  \begin{subequations}
  \begin{align} \label{eq:lambdamax}
    \lambda_{\max}(\bB)
    & \lesssim
    h^{d-2} \left( \frac{h}{\Delta t} + \frac{\nu}{h} + \stab \right), \\
    \lambda_{\min}(\bB) \label{eq:lambdamin}
    & \gtrsim h^3 \left(( \delta_n + h) \left(\Delta t + \frac{\delta_n + h}{\stab}\right) \right)^{-1}.
  \end{align}
  \end{subequations}
\end{lemma}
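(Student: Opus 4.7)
The plan is to translate both Rayleigh quotient extrema into continuous quantities via the standard norm equivalence $\Vert v \Vert_{\O(\G{n}{h})}^2 \simeq h^d \Vert \bx \Vert_2^2$, which holds on the active submesh $\mathcal{S}(\G{n}{h})$ by shape regularity and quasi-uniformity, and then to estimate $B_n(v,v)$ from above (for $\lambda_{\max}$) and from below (for $\lambda_{\min}$) in the volumetric $L^2$ norm. Because the skew-symmetric convective part has been peeled off into $\bC$, $B_n$ is genuinely symmetric; under \eqref{eq:conddtkappa} and \eqref{cond2} the divergence term $\Div_{\G{}{h}}(\bw^e - \tfrac12\wte)$ is absorbed into the $1/\Delta t$ mass prefactor, so that prefactor is $\simeq 1/\Delta t$ from above and below.

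For the upper bound \eqref{eq:lambdamax} I would bound each contribution to $B_n(v,v)$ by a multiple of $\Vert v \Vert_{\O(\G{n}{h})}^2$ using the Hansbo trace inequality \eqref{trace} together with standard FE inverse estimates, obtaining $\Vert v\Vert_{\G{n}{h}}^2 \lesssim h^{-1}\Vert v\Vert_{\O(\G{n}{h})}^2$ for the mass part, $\Vert \nabla_{\G{}{h}} v\Vert_{\G{n}{h}}^2 \lesssim h^{-3}\Vert v\Vert_{\O(\G{n}{h})}^2$ for the tangential-gradient part, and $\Vert \bn_h^n \cdot \nabla v\Vert_{\O(\G{n}{h})}^2 \lesssim h^{-2}\Vert v\Vert_{\O(\G{n}{h})}^2$ for the normal-gradient stabilization. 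Collecting these three contributions with their respective coefficients $1/\Delta t$, $\nu$, $\stab$, and translating through the norm equivalence, yields precisely $\lambda_{\max}(\bB) \lesssim h^d\bigl(\tfrac{1}{\Delta t\,h}+\tfrac{\nu}{h^3}+\tfrac{\stab}{h^2}\bigr) = h^{d-2}\bigl(h/\Delta t + \nu/h + \stab\bigr)$. This part of the argument is routine.

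The main work is in the lower bound \eqref{eq:lambdamin}, and the key ingredient is Theorem~\ref{lemcrucial}, which provides the volumetric control
\[
\Vert v\Vert_{\O(\G{n}{h})}^2 \lesssim (\delta_n+h)\Vert v\Vert_{\G{n}{h}}^2 + (\delta_n+h)^2\Vert \bn_h^n\cdot\nabla v\Vert_{\O(\G{n}{h})}^2.
\]
From \eqref{lower} together with \eqref{cond2}, $B_n$ dominates both $(\tfrac{1}{\Delta t})\Vert v\Vert_{\G{n}{h}}^2$ and $\stab\,\Vert \bn_h^n\cdot\nabla v\Vert_{\O(\G{n}{h})}^2$ separately, so each term on the right-hand side above can be replaced by a multiple of $B_n(v,v)$; this yields
\[
\Vert v\Vert_{\O(\G{n}{h})}^2 \lesssim (\delta_n+h)\Bigl(\Delta t + \tfrac{\delta_n+h}{\stab}\Bigr)\,B_n(v,v),
\]
and inverting and invoking the norm equivalence delivers the claim (with $d=3$). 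The principal technical obstacle is really contained in Theorem~\ref{lemcrucial} itself: without its surface-plus-normal-gradient-to-volume control, the small-cut geometry would allow $\Vert v\Vert_{\O(\G{n}{h})}$ to blow up relative to quantities that $B_n$ controls, and the conditioning would deteriorate with the cut position. The normal-diffusion stabilization parameter $\stab$ (sized through \eqref{cond3}) and the extension width $\delta_n$ (sized through \eqref{e:delta}) are precisely the knobs that make the trade-off in \eqref{eq:lambdamin} uniform in how $\Gamma_h^n$ is positioned on the background mesh.
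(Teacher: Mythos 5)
Your proposal is correct and follows essentially the same route as the paper: the upper bound \eqref{eq:lambdamax} via \eqref{cond2}, the norm equivalence $\Vert v\Vert_{\O(\G{n}{h})}^2\simeq h^3\Vert\bx\Vert_2^2$ and standard trace/inverse inequalities as in \eqref{eq:xCxest}, and the lower bound \eqref{eq:lambdamin} by combining the coercivity estimate \eqref{coer} with Theorem~\ref{lemcrucial} to absorb $(\delta_n+h)\Vert v\Vert_{\G{n}{h}}^2+(\delta_n+h)^2\Vert\bn_h^n\cdot\nabla v\Vert_{\O(\G{n}{h})}^2$ into $(\delta_n+h)\bigl(\Delta t+\tfrac{\delta_n+h}{\stab}\bigr)B_n(v,v)$. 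You merely spell out the element-wise trace/inverse rates that the paper leaves as ``standard''; no discrepancy in substance.
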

\begin{proof} Estimate
  \eqref{eq:lambdamax} follows with \eqref{cond2}, $\Delta t < (4 \xi_h)^{-1}$, standard FE inverse and trace inequalities similar to \eqref{eq:xCxest}.
  Then again, with \eqref{coer} and Theorem \ref{lemcrucial} we easily obtain \eqref{eq:lambdamin} with
  \begin{align}
h^3 \Vert \bx \Vert_2^2 & \simeq
    \Vert u \Vert_{\O(\G{n}{h})}^2
    \lesssim (\delta_n + h) \Vert u \Vert_{\G{n}{h}}^2 + (\delta_n + h)^2 \Vert \bn_h^n \cdot \nabla u\Vert_{\O(\G{n}{h})}^2 \\
    & \leq (\delta_n + h) \max \Big\{ {\Delta t}, \frac{\delta_n + h}{\stab} \Big\} \cdot B_n(u,u)
    \leq (\delta_n + h) \left( \Delta t + \frac{\delta_n + h}{\stab} \right) \cdot \underbrace{B_n(u,u)}_{= \bx^T \bB \bx}. \nonumber   \end{align}
\end{proof}
\begin{corollary}\label{cor:condest}
The estimates in Lemma \ref{lem:rhom} and Lemma \ref{lem:lambdas}  plugged into Lemma \ref{lem:elman} result in the following condition number bound:
  \begin{equation} \label{eq:condest}
    \kappa(\bA) \lesssim \underbrace{\frac{\delta_n + h}{h}}_{K_{1}}
      \Big(
        \underbrace{\frac{1}{\Delta t} + \frac{\nu}{h^2} + \frac{\Vert \bw \Vert_{\infty}}{h}}_{K_{2,a}}
        +
        \underbrace{\frac{\stab}{h} }_{K_{2,b}} \Big)
      \underbrace{\left( \Delta t + \frac{\delta_n + h}{\stab}\right)}_{K_{3}}
  \end{equation}
\end{corollary}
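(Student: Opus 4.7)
The proof is essentially a direct substitution of the estimates proved immediately above into the framework of Lemma \ref{lem:elman}. The plan is as follows.

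First, I would apply Lemma \ref{lem:elman} to obtain
\[
\kappa(\bA) \;\leq\; \frac{\lambda_{\max}(\bB) + \rho(\bC)}{\lambda_{\min}(\bB)}.
\]
Specializing to $d=3$, so that $h^{d-2}=h$, Lemmas \ref{lem:rhom} and \ref{lem:lambdas} yield
\[
\lambda_{\max}(\bB) + \rho(\bC) \;\lesssim\; h\Big(\tfrac{h}{\Delta t} + \tfrac{\nu}{h} + \stab + \Vert \bw \Vert_{\infty}\Big),
\qquad
\lambda_{\min}(\bB)^{-1} \;\lesssim\; h^{-3}(\delta_n+h)\Big(\Delta t + \tfrac{\delta_n+h}{\stab}\Big).
\]

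Next, I would multiply the two bounds together. Collecting the factor $h^{-3}\cdot h = h^{-2}$ and then pulling an additional $h^{-1}$ inside the second parenthesis produces the factorization
\[
\kappa(\bA) \;\lesssim\; \frac{\delta_n+h}{h}\cdot\Big(\tfrac{1}{\Delta t} + \tfrac{\nu}{h^2} + \tfrac{\Vert \bw \Vert_{\infty}}{h} + \tfrac{\stab}{h}\Big)\cdot \Big(\Delta t + \tfrac{\delta_n+h}{\stab}\Big),
\]
which is exactly the estimate \eqref{eq:condest} claimed in the corollary, with $K_1,K_{2,a},K_{2,b},K_3$ identified as in the statement.

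There is essentially no obstacle here; the only small bookkeeping point worth mentioning is that the proof of Lemma \ref{lem:lambdas} naturally produces the factor $\max\{\Delta t, (\delta_n+h)/\stab\}$, whereas the statement of the corollary uses the sum $\Delta t + (\delta_n+h)/\stab$. Since these two quantities are equivalent up to a factor of $2$, this has no effect on the hidden constant absorbed by $\lesssim$. The stated conditions \eqref{eq:conddtkappa} and \eqref{cond2} were already used in Lemma \ref{lem:lambdas} and do not need to be invoked again.
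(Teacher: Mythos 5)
Your proposal is correct and is essentially the paper's own argument: the corollary is proved exactly by inserting the bounds of Lemmas \ref{lem:rhom} and \ref{lem:lambdas} (with $d=3$, so $h^{d-2}=h$) into Lemma \ref{lem:elman} and regrouping the factors as in \eqref{eq:condest}. Your side remark about $\max$ versus sum is harmless, since \eqref{eq:lambdamin} is already stated with the sum and the two are equivalent up to a factor of $2$ absorbed by $\lesssim$.
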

We notice that in these condition number estimates no assumption on the scaling on the stabilization parameter was used.
\begin{remark}[Discussion of Corollary \ref{cor:condest}] \label{rem:coroll}\rm
Let us discuss the terms on the right hand side of \eqref{eq:condest}.
The first term, $K_1$ describes the layer thickness in terms of elements. We note that this term is bounded by a constant for $\Delta t \lesssim h$. Otherwise the condition  number will increase with an increasing (element) layer thickness.
In the second term, $K_2=K_{2,a}+K_{2,b}$, we first note that the latest contribution $K_{2,b}=\frac{\stab}{h}$ can be absorbed by $\frac{\nu}{h^2}$ if condition \eqref{cond5} is fulfilled and $\nu = \mathcal{O}(1)$.
For the last term, $K_3$, we can use condition \eqref{cond3} to bound $K_3 \lesssim \Delta t + h^2$.
Assume $\Delta t$ is the dominating summand in $K_3$.
Then, there holds $K_2 \cdot K_3 \simeq 1 + \frac{\nu \Delta t}{h^2} + \frac{\norm{\bw}_{\infty} \Delta t}{h}$ which is the usual condition number scaling known from fitted convection diffusion equation discretizations on stationary domains which is the best that we can expect in our setting.
\end{remark}

\section{Numerical experiments}\label{s:Numerics}
In this section, we will show some numerical experiments for the proposed method. The results demonstrate the accuracy of the
stabilized TraceFEM and verify the analysis results on error estimates and condition number bounds.

All implementations are done in the finite element package DROPS \cite{DROPS}.
We applied both the backward Euler scheme and the BDF2 scheme to approximate the time derivative.
At each time step, we assemble the stiffness matrix and the right-hand side by numerical integration
over the discrete surfaces $\G{n}{h}$ which is obtained by piecewise linear interpolation $\phi_h^n$ of the exact level set function $\phi^n$,
$\G{n}{h}=\{\bx\in\mathbb{R}^3~:~\phi_h^n(\bx)=0\}$,
i.e. $q=1$ in \eqref{phi_h}.
For the disretization in space we consider piecewise linears, i.e. $k=1$ in \eqref{eq:Vh}.
The computational domain in all considered examples is $\Omega=[-2,2]^3$ which contains $\G{}{}(t)$ (and $\G{}{h}(t)$) at all times $t \in [0,T]$.
To arrive at a computation mesh, we use a combination of uniform subdivision into cubes with side length $h$ and a Kuhn subdivision into 6 tetrahedra. This results in the shape regular background triangulation $\T_h$.
The temporal grid is chosen uniform in all experiments, $t_n=n\Delta t$ with $\Delta t=\frac{T}{N}$.
For the narrow band zone we choose $c_{\delta}=2.5$ in \eqref{e:delta} which is sufficient for the backward Euler and the BDF2 scheme.
All linear systems are solved using GMRES with a Gauss--Seidel preconditioner to a relative tolerance of $10^{-15}$.

In the experiments we are interested in the $L^2(0,T;H^1(\G{}{h}(t))$ surface norms, which we approximate using the trapezoidal quadrature rule in time, and $L^\infty(0,T;L^2(\G{}{h}(t))$ which we approximate by $\max_{n=1,..,N} \norm{\cdot}_{L^2(\G{n}{h}(t))}$.
To investigate the rates of convergence we apply successive refinements in space and in time.
The numerical results of these convergence studies are supplemented by ``experimental orders of convergence''(\eoc{}) in space and time where \eoc{} $=  \log_2(e_b/e_a)$ for two successive errors $e_a$ and $e_b$. We use a subscript \texttt{x} or \texttt{t} for every refinement in space or time, respectively, that has been applied between the two compared errors. This mean that \eoc{x} / \eoc{t} denote the usual \eoc{} for one refinement in space / time. For two time levels of refinement at once between the comparison, we have \eoc{tt} as in Tables \ref{tab:table1} and \ref{tab:table2}. Consequently, combined refinements in space and time with $h \sim \Delta t$ are denoted \eoc{xt}, whereas combined refinements with $h \sim \Delta t^2$ are denoted by \eoc{xtt}.

For the different test problems, below, we apply the backward Euler scheme and the BDF2 scheme.
In the first experiment we consider two different scalings for $\stab$
\begin{subequations} \label{eq:rhoscales}
\begin{align}
  \stab &\sim 1, \label{eq:rhoscales1}\\
  \stab &\sim \frac{\nu}{\delta_h + h} + \norm{\bw}_{\infty}, \label{eq:rhoscales2}
\end{align}
\end{subequations}
where only the latter scaling fulfills the lower bound \eqref{cond3} for the stability analysis. The scaling in the parameters $\bw$ and $\nu$ is motivated by scaling arguments.
We choose the constants so that $\stab = 4$ and $\stab = \frac{\nu}{\delta_h+h} + \norm{\bw}_{\infty}$ and evaluate errors as well as condition numbers.
In the other experiments we only consider $\stab = \frac{\nu}{\delta_h+h} + \norm{\bw}_{\infty}$.
\begin{table}
\caption{$L^2(H^1)$- and $L^{\infty}(L^2)$-norm error in Experiment~1 with backward Euler and $\stab = 4$.}\label{tab:table1}
\vspace{-0.2cm}
\begin{center}
  \footnotesize
  \begin{tabular}{@{~}l@{~~}r@{~~}r@{~~}r@{~~}r@{~}r@{~}}
    \toprule
    \multicolumn{5}{c}{$L^2(H^1)$-norm of the error} \\
    \midrule
    & $h=1/2$
    & $h=1/4$
    & $h=1/8$
    & $h=1/16$ & \eoc{tt} \\
    \midrule
    $\Delta t=1/8$
    &\numbf{0.93099}
    &\num{0.612822}
    &\num{0.376864}
    &\num{0.244665} & -- \\
    $\Delta t=1/32$
    &\num{0.915289}
    &\numbf{0.63117}
    &\num{0.348896}
    &\num{0.181064} & \numQ{0.434308003335583}\\
    $\Delta t=1/128$
    &\num{0.916754}
    &\num{0.640278}
    &\numbf{0.34972}
    &\num{0.176787} & \numQ{0.03448754104396}\\
    $\Delta t=1/512$
    &\num{0.917529}
    &\num{0.643064}
    &\num{0.350548}
    &\numbf{0.17682} & \numQ{-0.000269275984285}\\
    \midrule
    \multicolumn{1}{r}{\eoc{x}}
    &{---}
    &\numQ{0.512791430602081}
    &\numQ{0.875350323022119}
    &\numQ{0.987330537485532}\\
    \midrule
    \multicolumn{1}{r}{\underline{\eoc{xtt}}}
    &{---}
    &\numQ{0.560737036947621}
    &\numQ{0.851828330449777}
    &\numQ{0.983918837820405}\\
    \bottomrule
  \end{tabular}
  \begin{tabular}{@{~}l@{~~}r@{~~}r@{~~}r@{~~}r@{~}r@{~}}
    \toprule
    \multicolumn{5}{c}{$L^{\infty}(L^2)$-norm of the error} \\
    \midrule
    & $h=1/2$
    &$h=1/4$
    &  $h=1/8$
    & $h=1/16$ & \eoc{tt} \\
    \midrule
    \vphantom{$\Delta t=1/8$}
    &\numbf{0.223854}	
    &\num{0.117571}	
    &\num{0.14301}	
    &\num{0.157484} & --- \\
    \vphantom{$\Delta t=1/32$}
    &\num{0.330099}	
    &\numbf{0.133048}
    &\num{0.0363278}	
    &\num{0.0397675} & \numQ{1.98554348717337}\\
    \vphantom{$\Delta t=1/128$}
    &\num{0.364026}
    &\num{0.159023}	
    &\numbf{0.0298081}
    &\num{0.0121568} &\numQ{1.70982634715732} \\
    \vphantom{$\Delta t=1/512$}
    &\num{0.372923}
    &\num{0.166124}
    &\num{0.033647}
    &\numbf{0.0077501} & \numQ{0.649476691428914}\\
    \midrule
    {---}
    &\numQ{1.16661726289071}
    &\numQ{2.30371073152379}
    &\numQ{2.11819104764506}\\
    \midrule
    {---}
    &\numQ{0.750611273998059}
    &\numQ{2.15817049923177}
    &\numQ{1.94341758854398}\\
    \bottomrule
  \end{tabular}
\end{center}
\vspace*{-0.25cm}
\end{table}
\begin{table}\small
\caption{$L^2(H^1)$- and $L^{\infty}(L^2)$-norm error in Experiment~1 with backward Euler and $\stab = \norm{\bw}_{\infty} + \nu (\delta_h+h)^{-1}$.}\label{tab:table2}
\vspace{-0.2cm}
\begin{center}
  \footnotesize
  \begin{tabular}{@{~}l@{~~}r@{~~}r@{~~}r@{~~}r@{~~}r@{~}}
    \toprule
    \multicolumn{5}{c}{$L^2(H^1)$-norm of the error} \\
    \midrule
    & $h=1/2$
    &$h=1/4$
    &  $h=1/8$
    & $h=1/16$ &\eoc{tt} \\
    \midrule
    $\Delta t=1/8$
    & \numbf{0.94790}
    & \num{0.601728}	
    & \num{0.380077}
    & \num{0.246941}&---\\
    $\Delta t=1/32$
    & \num{0.915857}	
    & \numbf{0.61166}
    & \num{0.353779}	
    & \num{0.185639}&\numQ{0,411666558146006}\\
    $\Delta t=1/128$
    & \num{0.912044}
    & \num{0.618749}
    & \numbf{0.35496}
    & \num{0.181816}&\numQ{0,030020667327612}\\
    $\Delta t=1/512$
    & \num{0.911415}
    & \num{0.621017}
    & \num{0.355871}
    & \numbf{0.18193}&\numQ{-0,000904297174656}\\
    \midrule
    \multicolumn{1}{r}{\eoc{x}}
    &{---}
    &\numQ{0.553475352551446}
    &\numQ{0.803278389846293}
    &\numQ{0.967970911145681}\\
    \midrule
    \multicolumn{1}{r}{\underline{\eoc{xtt}}}
    &{---}
    &\numQ{0.632004935122876}
    &\numQ{0.785073474681598}
    &\numQ{0.964272997290419}\\
    \bottomrule
  \end{tabular}
  \begin{tabular}{@{~}l@{~~}r@{~~}r@{~~}r@{~~}r@{~~}r@{~}}
    \toprule
    \multicolumn{5}{c}{$L^{\infty}(L^2)$-norm of the error} \\
    \midrule
    & $h=1/2$
    &$h=1/4$
    &  $h=1/8$
    & $h=1/16$ & \eoc{tt}\\
    \midrule
    \vphantom{$\Delta t=1/8$}
    & \numbf{0.163533}
    & \num{0.12481}
    & \num{0.143253}
    & \num{0.156605} & ---\\
    \vphantom{$\Delta t=1/32$}
    & \num{0.205187}
    & \numbf{0.110691}
    & \num{0.0347869}
    & \num{0.0378331} & \numQ{2,04940937586473}\\
    \vphantom{$\Delta t=1/128$}
    & \num{0.233605}	
    & \num{0.134622}
    & \numbf{0.0309132}
    & \num{0.00909762}& \numQ{2,05608791314427} \\
    \vphantom{$\Delta t=1/512$}
    & \num{0.241299}
    & \num{0.141716}	
    & \num{0.0351604}
    & \numbf{0.00775292} & \numQ{0,230749397447855}\\
    \midrule
    &{---}
    &\numQ{0.767819286292212}
    &\numQ{2.01097926199167}
    &\numQ{2.18113979991665}\\
\midrule
    &{---}
    &\numQ{0.563043867425114}
    &\numQ{1.84024301651728}
    &\numQ{1.99541132005739}\\
    \bottomrule
  \end{tabular}
\end{center}
\vspace*{-0.25cm}
\end{table}

\smallskip
\noindent{\bf Experiment 1.} We consider the transport--diffusion equation \eqref{transport}
on a unit sphere $\Gamma(t)$ moving with the constant velocity $\mathbf{w}=(0.2,0,0)$ for $t \in [0,T]$, $T=1$.
The level-set function $\phi$,
$$\phi=|\bx-\mathbf{c}(t)|-1,$$ with $\mathbf{c}(t)=t\mathbf{w}$
describes a sphere with radius $1$ that moves along $\bw$. We notice that $\phi$ is a signed distance function. The initial data is given by
\[
\G{0}{}:=\{\bx\in\mathbb{R}^3~:~|\bx|=1\},\quad u(\bx,0)=1+x_1+x_2+x_3.
\]
One easily checks that the exact solution is given by
$u(\bx,t)=1+(x_1+x_2+x_3-0.2t )\exp(-2t)$ and that $\xi = 0.1$ in \eqref{e:xi}. For sufficiently small $h$ we can assume that $\xi_h \approx 0.1$ (cf, \eqref{cond2}) which ensures unique solvability of every time step for $\Delta t \leq 2$.

The error measures for the backward Euler method are shown in Tables~\ref{tab:table1} and Table \ref{tab:table2} for the different scalings for $\stab$.
 In both cases we observe an $\mathcal{O}(h)$-convergence in the $L^2(H^1)$-norm. The initial temporal resolution is already so high that the spatial error is always dominating and we do not observe the linear convergence in time, yet. However, for the $L^\infty(L^2)$-norm we observe a  convergence with $h^2+\Delta t$.
The impact of the scaling of $\stab$ on the results is very small
which can be seen as some robustness of the method (in view of accuracy) with respect
to the stabilization parameter $\stab$.

\begin{table}
\caption {Maximum condition number in Experiment~1 for two different choices for $\stab$. Here, \eoca{x} and \eoca{t} refer to the coarsest time level, $\Delta t = 1/2$ and the coarsest space level $h=1/2$, respectively.}\label{tab:condnumber}
\vspace{-0.2cm}
  \footnotesize
\begin{center}
\begin{tabular}{@{~}l@{~~}r@{~~}r@{~~}r@{~~}r@{~~}r@{~}}
\toprule
\multicolumn{5}{c}{$\kappa(\bA)$ for $\stab=4$}\\
\midrule
&$h=1/2$
&$h=1/4$
&$h=1/8$
&$h=1/16$&\eoca{t}\\
\midrule
$\Delta t= 1/2$
&\numit{5.710E+01}
&\num{1.109E+02}
&\num{3.320E+02}
&\num{1.791E+03}& ---\\
$\Delta t=1/4 $
&\num{6.178E+01}
&\numbf{8.999E+01}
&\num{1.815E+02}
&\num{6.262E+02}&\numQ{-0.113649125377433}\\
$\Delta t=1/8$
&\num{7.199E+01}
&\numit{7.66E+01}
&\numbf{1.617E+02}
&\num{3.364E+02}&\numQ{-0.220656647361993}\\
$\Delta t=1/16$
&\num{1.059E+02}
&\num{8.290E+01}
&\num{1.577E+02}
&\numbf{3.015E+02}&\numQ{-0.556834165890068}\\
$\Delta t=1/32$
&\num{1.875E+02}
&\num{1.004E+02}
&\numit{1.43E+02}
&\num{2.940E+02}&\numQ{-0.824188006278269}\\
$\Delta t=1/64$
&\num{3.549E+02}
&\num{1.524E+02}
&\num{1.540E+02}
&\num{2.923E+02}&\numQ{-0.920521978790339}\\
$\Delta t=1/128$
&\num{6.895E+02}
&\num{2.770E+02}
&\num{1.793E+02}
&\numit{2.91E+02}&\numQ{-0.958137977340398}\\
  \midrule
    \multicolumn{1}{r}{\eoca{x}}
  &{---}
    &\numQ{-0,95769671479887}
    &\numQ{-1,58192387607258}
    &\numQ{-2,43151019063904}\\
\midrule
    \multicolumn{1}{r}{\underline{\eoc{xt}}}
  &{---}
    &\numQ{-0.656273947499022}
    &\numQ{-0.845483080611797}
    &\numQ{-0.898838323313786}\\
\midrule
    \multicolumn{1}{r}{\underline{\underline{\eoc{xtt}}}}
  &{---}
    &\numQ{-0.423853646562114}
    &\numQ{-0.900598849740795}
    &\numQ{-1.02500400612989}\\
\bottomrule
\end{tabular}
\begin{tabular}{@{~}l@{~~}r@{~~}r@{~~}r@{~~}r@{~}r@{~}}
\toprule
  \multicolumn{5}{c}{$\kappa(\bA)$ for $\stab = \norm{\bw}_{\infty} \!\!+ \nu(\delta_h+h)^{-1}$}\\
\midrule
&$h=1/2$
&$h=1/4$
&$h=1/8$
&$h=1/16$&\eoca{t}\\
\midrule
&\numit{1.78E+02}
&\num{2.389E+02}
&\num{4.797E+02}
&\num{2.070E+03}&---\\
&\num{1.937E+02}
&\numbf{1.692E+02}
&\num{2.313E+02}
&\num{7.430E+02}&\numQ{-0.121946712749494}\\
&\num{2.313E+02}
&\numit{1.66E+02}
&\numbf{1.792E+02}
&\num{3.306E+02}&\numQ{-0.255941312032937}\\
&\num{3.575E+02}
&\num{1.761E+02}
&\num{1.686E+02}
&\numbf{2.050E+02}&\numQ{-0.628177975916924}\\
&\num{6.698E+02}
&\num{2.165E+02}
&\numit{1.67E+02}
&\num{1.716E+02}&\numQ{-0.905787134153602}\\
&\num{1.301E+03}
&\num{3.347E+02}
&\num{1.686E+02}
&\num{1.649E+02}&\numQ{-0.957818680898442}\\
&\num{2.562E+03}
&\num{6.142E+02}
&\num{1.959E+02}
  &\numit{1.71E+02}&\numQ{-0.977649513623851}\\
\midrule
    \multicolumn{1}{r}{\vphantom{\eoc{x}}}
    &{---}
    &\numQ{-0.42452961260257}
    &\numQ{-1.00572558574516}
    &\numQ{-2.10942642307256}\\
\midrule
    \multicolumn{1}{r}{\vphantom{\underline{\eoc{xt}}}}
    &{---}
    &\numQ{0.007314767273381}
    &\numQ{-0.082841068937654}
    &\numQ{-0.194053272335204}\\
\midrule
    \multicolumn{1}{r}{\vphantom{\underline{\underline{\eoc{xtt}}}}}
    &{---}
    &\numQ{0.100693999619473}
    &\numQ{-0.0008664861127128}
    &\numQ{-0.034148222411846}\\
\bottomrule
\end{tabular}
\end{center}
\vspace*{-0.25cm}
\end{table}
The maximal condition numbers for each simulation are shown in Table~\ref{tab:condnumber} for the different scalings. We first discuss $\stab = 4$.
For fixed $\Delta t=1/2$, we observe that the condition number increases like $\mathcal{O}(h^{-2})$ which is slightly better than predicted.
For fixed $h=1/2$, the condition number increases with order $\mathcal{O}(\Delta t^{-1})$.
When we refine $h$ and $\Delta t $ simultaneously, we observe the predicted $\mathcal{O}(h^{-1})$ behavior
for both cases with $\Delta t\sim h$ and $\Delta t\sim h^2$.
For the scaling $\stab = \norm{\bw}_{\infty} + \frac{\nu}{\delta_h + h}$ we observe slightly higher condition numbers, the same behavior for fixed $h$ or fixed $\Delta t$, but a better scaling for  $\Delta t\sim h$ and $\Delta t\sim h^2$. For $\Delta t\sim h$ the condition number only grows slowly with $h^{-1}$ (not even linear as predicted) and is constant for $\Delta t\sim h^2$.

Finally, we do experiments for the BDF2 scheme, cf. Remark \ref{rem:bdf2}. In this case, we expect that the method is of $\mathcal{O}(\Delta t^2)$ accuracy.
This is clearly shown in Table~\ref{tab:convergencerateH1} when we refine both $h$ and $\Delta t$ (with $\Delta t\sim h$).
In these tests, we only considered $\stab=4$.
We notice that the system matrix is different from that of the backward Euler scheme only by a different coefficient in front of the mass matrix.
Therefore, the algebraic stability of the BDF2 scheme is the same as that of the backward
Euler scheme, and is covered by the analysis in Section 6.
These results indicate that the stabilized TraceFEM
method can be generalized to higher order time discretization schemes.
\begin{table}\small
\caption{$L^2(H^1)$- and $L^{\infty}(L^2)$-norm error in Experiment~1 with BDF2 scheme and $\stab = 4$.}\label{tab:convergencerateH1}
\vspace{-0.2cm}
\begin{center}
  \footnotesize
\begin{tabular}{@{~}l@{~~}r@{~~}r@{~~}r@{~~}r@{~~}r@{~}}
\toprule
  \multicolumn{5}{c}{$L^2(H^1)$-norm of the error} \\
\midrule
&$h=1/2$
&$h=1/4$
&$h=1/8$
&$h=1/16$&\eoc{t}\\
\midrule
$\Delta t=1/8$
&\numbf{1.03759}
&\num{0.684156}
&\num{0.372373}
&\num{0.188597}
& ---
  \\
$\Delta t=1/16$
&\num{0.977385}
&\numbf{0.66551}
&\num{0.360772}
&\num{0.182046}
& \numQ{0.05100368578163}
  \\
$\Delta t=1/32$
&\num{0.947208}
&\num{0.654957}
&\numbf{0.35559}
&\num{0.179307}
& \numQ{0.021871230608172}
\\
$\Delta t=1/64$
&\num{0.932367}
&\num{0.649682}
&\num{0.353201}
&\numbf{0.17807}
& \numQ{0.009987329305863}
  \\
  \midrule
    \multicolumn{1}{r}{\eoc{x}}
  &{---}
    &\numQ{0.521164208299921}
    &\numQ{0.879244307006817}
    &\numQ{0.988044945975833}\\
  \midrule
    \multicolumn{1}{r}{\underline{\eoc{xt}}}
    &{---}
    &\numQ{0.640704231031356}
    &\numQ{0.904245591466511}
    &\numQ{0.997770271225224}\\
\bottomrule
\end{tabular}
\begin{tabular}{@{~}l@{~~}r@{~~}r@{~~}r@{~~}r@{~~}r@{~}}
\toprule
  \multicolumn{5}{c}{$L^{\infty}(L^2)$-norm of the error} \\
\midrule
&$h=1/2$
&$h=1/4$
&$h=1/8$
&$h=1/16$&\eoc{t}\\
\midrule
\vphantom{$\Delta t=1/8$}
&\numbf{0.394141}
&\num{0.183495}
&\num{0.0492045}
&\num{0.0342043}
& ---
\\
\vphantom{$\Delta t=1/16$}
&\num{0.382957}
&\numbf{0.173536}
&\num{0.0371924}
&\num{0.0143018}
&\numQ{1,25798097163806}
\\
\vphantom{$\Delta t=1/32$}
&\num{0.377829}
&\num{0.169889}
&\numbf{0.0350717}
&\num{0.00894927}
&\numQ{0.676354823195156}
\\
\vphantom{$\Delta t=1/64$}
&\num{0.376393}
&\num{0.169011}
&\num{0.0348942}
&\numbf{0.00825413}
&\numQ{0.11665384471925}
 \\
\midrule
    &{---}
    &\numQ{1.15512265099631}
    &\numQ{2.27605798526386}
    &\numQ{2.07979919066443}\\
\midrule
    &{---}
    &\numQ{1.18347685116084}
    &\numQ{2.30685571267864}
    &\numQ{2.08711929714897}\\
\bottomrule
\end{tabular}
\end{center}
\vspace*{-0.25cm}
\end{table}

\smallskip
\noindent{\bf Experiment~2.}
The setup of this experiment is similar to the previous one.  The transport velocity is given by a standing vortex, $\mathbf{w}=(-0.2\pi x_2, 0.2\pi x_1,0)$ for $t\in[0,T]$, $T=1$.
Initially, the sphere with radius $1$ is located off the center.
The initial data is
\[
\G{0}{}:=\{\bx\in\mathbb{R}^3~:~|\bx-\bx_0|=1\},\quad u|_{t=0}=1+(x_1-0.5)+x_2+x_3,
\]
with $\bx_0=(0.5,0,0)$.
As the level-set function we choose
$$
\phi=(x_1- 0.5\cos 0.2\pi t)^2+(x_2- 0.5\sin 0.2\pi t)^2+x_3^2-1.
$$
which is not a signed distance function.
 Now $\bw$ revolves the sphere around the center of the domain without changing its shape.
One checks that the exact solution to \eqref{transport} is given by
\[
u(\bx,t)=(x_1(\cos(0.2\pi t)-\sin(0.2\pi t))+x_2(\cos(0.2\pi t)+\sin(0.2\pi t))+x_3+0.5 )\exp(-2t).
\]
and that there hold the bounds $\xi \leq 0.6$ and $\|\wn\|_{\infty} \leq \frac{\pi}{10}$. Hence, for $h$ sufficiently small $\Delta t \leq 0.4$ ensures unique solvability in every time step.

{
\begin{table}
\caption{$L^2(H^1)$- and $L^{\infty}(L^2)$-norm error in Experiment~2 with backward Euler and $\stab = \norm{\bw}_{\infty}+\nu{(\delta_h + h)}^{-1}$.}\label{tab:ErrorH1_Ex2BE}
\vspace{-0.2cm}
\begin{center}
  \footnotesize
\begin{tabular}{@{~}l@{~~}r@{~~}r@{~~}r@{~~}r@{~~}r@{~}}
\toprule
  \multicolumn{5}{c}{$L^2(H^1)$-norm of the error} \\
\midrule
&$h=1/2$
&$h=1/4$
&$h=1/8$
&$h=1/16$&\eoc{tt}\\
\midrule
$\Delta t=1/8$
&\numbf{1.00254}
&\num{0.650837}
&\num{0.382428}
&\num{0.337564}
&---
\\
$\Delta t=1/32$
&\num{0.99045}
&\numbf{0.649988}
&\num{0.348426}
&\num{0.183455}
&\numQ{0.879734828697388}
\\
$\Delta t=1/128$
&\num{0.994787}
&\num{0.656637}
&\numbf{0.349036}
&\num{0.178477}
&\numQ{0.039688056458053}
\\
$\Delta t=1/512$
&\num{0.996284}
&\num{0.659073}
&\num{0.349911}
&\numbf{0.178506}
&\numQ{-0.000234398591965}
\\
  \midrule
    \multicolumn{1}{r}{\eoc{x}}
    &{---}
    &\numQ{0.596118785191955}
    &\numQ{0.913450250670887}
    &\numQ{0.97101545127719}\\
\midrule
    \multicolumn{1}{r}{\underline{\eoc{xtt}}}
    &{---}
    &\numQ{0.62517481078584}
    &\numQ{0.897037238084716}
    &\numQ{0.967403278050209}\\
\bottomrule
\end{tabular}
\begin{tabular}{@{~}l@{~~}r@{~~}r@{~~}r@{~~}r@{~~}r@{~}}
\toprule
\multicolumn{5}{c}{$L^{\infty}(L^2)$-norm of the error}\\
\midrule
&$h=1/2$
&$h=1/4$
&$h=1/8$
&$h=1/16$
& \eoc{tt}
  \\
\midrule
\vphantom{$\Delta t=1/8$}
&\numbf{0.301253}
&\num{0.169439}
&\num{0.170269}
&\num{0.229105}
&---
\\
\vphantom{$\Delta t=1/32$}
&\num{0.335412}
&\numbf{0.119227}
&\num{0.0496647}
&\num{0.0454392}
&\numQ{2.333999604}
\\
\vphantom{$\Delta t=1/128$}
&\num{0.361231}
&\num{0.131671}
&\numbf{0.0345041}
&\num{0.0132899}
&\numQ{1.773607186}
\\
\vphantom{$\Delta t=1/512$}
&\num{0.36829}
&\num{0.145938}
&\num{0.036698}
&\numbf{0.00908549}
&\numQ{0.54869402}
\\
\midrule
    &{---}
    &\numQ{1.335486637}
    &\numQ{1.99158}
    &\numQ{2.014065}\\
\midrule
    &{---}
    &\numQ{1.337264625}
    &\numQ{1.788871276}
    &\numQ{1.925131573}\\
\bottomrule
\end{tabular}
\end{center}
\vspace*{-0.25cm}
\end{table}
}
The numerical results are similar to those in Experiment 1. For simplicity, we show
only the errors for the backward Euler scheme with $\stab=\norm{\bw}_{\infty}+\frac{\nu}{\delta_h + h}$ in Table~\ref{tab:ErrorH1_Ex2BE}. If one refines both $\Delta t$ and $h$ with constraint $\Delta t\sim h^2$, the first order of convergence in the surface $L^2(H^1)$-norm and the second order in the surface $L^\infty(L^2)$-norm with respect to $h$ are again observed.
This example demonstrates that the numerical method works well even if the level-set function is not a signed distance function.

\smallskip
\noindent{\bf Experiment~3.} In this experiment, we consider a shrinking sphere and solve \eqref{transport} with a source term
on the right-hand side.
The bulk velocity field is given by
$
\mathbf{w}={ -{\frac34 e^{-t/2}}}\mathbf{n},
$
for $t\in[0,T]$, $T=0.5$.
Here $\mathbf{n}$ is the unit outward normal on $\Gamma(t)$. $\G{0}{}$ is the  sphere with radius $r_0=1.5$.
The level-set function is chosen as a signed distance function $\phi=|\bx|-r(t)$, with $r(t)=r_0e^{-t/2}$.
One computes $\xi = -1$ and $\|\wn\|_\infty = \frac34$ and with
the  right-hand side
$
f(\bx,t)=(-1.5 e^{t}+\frac{16}{3}e^{2t})x_1x_2x_3.
$
the exact solution  $u(\bx,t)=(1+x_1x_2x_3) e^{t}$.%
\begin{table}
\caption{$L^2(H^1)$- and $L^{\infty}(L^2)$-norm error in Experiment~3 with backward Euler and $\stab =   \norm{\bw}_{\infty}+\nu{(\delta_h + h)}^{-1}$.}\label{tab:ErrorH1_Ex3BE}
\vspace{-0.2cm}
\begin{center}
  \footnotesize
\begin{tabular}{@{~}l@{~~}r@{~~}r@{~~}r@{~~}r@{~~}r@{~}}
\toprule
  \multicolumn{5}{c}{$L^2(H^1)$-norm of the error} \\
\midrule
&\hspace{-0.5cm}$h=1/2$
&$h=1/4$
&$h=1/8$
&$h=1/16$
&\eoc{tt}  \\
\midrule
$\Delta t=1/8$
&\numbf{1.22087}
&\num{0.725252}
&\num{0.408168}
  &\num{0.258002}
  & ---
  \\
$\Delta t=1/32$
&\num{1.13401}
&\numbf{0.670837}
&\num{0.35655}
  &\num{0.189181}
  &\numQ{0.447615047}
  \\
$\Delta t=1/128$
&\num{1.11798}
&\num{0.660794}
&\numbf{0.34914}
  &\num{0.179906}
  &\numQ{0.072523899525935}
\\
$\Delta t=1/512$
&\num{1.11517}
&\num{0.658839}
&\num{0.348084}
&\numbf{0.179016}
  &\numQ{0.007154764645359}
\\
\midrule
    \multicolumn{1}{r}{\eoc{x}}
    &{---}
    &\numQ{0.75926579}
    &\numQ{0.92049046}
    &\numQ{0.95934696}\\
\midrule
    \multicolumn{1}{r}{\underline{\eoc{xtt}}}
    &{---}
    &\numQ{0.86387543}
    &\numQ{0.94215661}
    &\numQ{0.96371714}\\
\bottomrule
\end{tabular}
\begin{tabular}{@{~}l@{~~}r@{~~}r@{~~}r@{~~}r@{~~}r@{~}}
\toprule
\multicolumn{5}{c}{$L^{\infty}(L^2)$-norm of the error}\\
\midrule
&$h=1/2$
&$h=1/4$
&$h=1/8$
  &$h=1/16$
&\eoc{tt}  \\
\midrule
\vphantom{$\Delta t=1/8$}
&\numbf{0.569191}
&\num{0.319445}
&\num{0.249476}
&\num{0.23927}
&---
\\
\vphantom{$\Delta t=1/32$}
&\num{0.530595}
&\numbf{0.225269}
&\num{0.0853798}
&\num{0.070355}
&\numQ{1.765914659}
\\
\vphantom{$\Delta t=1/128$}
&\num{0.531947}
&\num{0.207719}
&\numbf{0.0635117}
&\num{0.0221062}
&\numQ{1.670201906}
\\
\vphantom{$\Delta t=1/512$}
&\num{0.53292}
&\num{0.207172}
&\num{0.0612686}
&\numbf{0.0169603}
&\numQ{0.382289362}
\\
\midrule
    &{---}
    &\numQ{1.3630899}
    &\numQ{1.75760924}
    &\numQ{1.852986197}\\
\midrule
    &{---}
    &\numQ{1.3372624057}
    &\numQ{1.826554501}
    &\numQ{1.904860698}\\
\bottomrule
\end{tabular}
\end{center}
\vspace*{-0.25cm}
\end{table}
Table~\ref{tab:ErrorH1_Ex3BE} shows the error norms for various time steps $\Delta t$ and mesh sizes $h$. The results are consistent with the previous experiments and our analysis.

\smallskip
\noindent{\bf Experiment 4.}
Additionally, we consider a problem where two initially separated spheres merge to one surface. The numerical results are similar to that by the method based on a fast marching extension in \cite{olshanskii2017trace}, i.e. a stable numerical solution. This indicates that the proposed method is robust also problems with topology changes (which is not covered by our numerical analysis).

\section{Conclusions and open problem} \label{s:Conclusions}
In this paper we introduced a new numerical method for PDEs for evolving surfaces using the example of a scalar transport diffusion equation.
The main feature of the method is its simplicity.
With the help of the stabilization which also provides a meaningful extension, standard time integration methods based on finite differences can be applied and combined with a TraceFEM for the spatial discretization. The two components, time and space discretizations can be exchanged so that higher order in space and/or in time methods can be used, if desired (and available).
Besides the introduction of the method, we carried out a careful a priori error analysis yielding optimal order estimates and reasonable condition number bounds. For the accessibility of the paper we made several restrictions and simplifications. We mention aspects where we think that an extension of our results beyond these restrictions is worth pursuing.\smallskip


The geometry in the analysis part of the paper is always described by a level set function which has the signed distance property. 
We made this assumption as it simplified the - still technical enough - analysis. However, we believe that this assumption could be replaced with the much milder assumption $c\leq\norm{\nabla \phi}\leq c^{-1}$ for some $0 < c < 1$ in the vicinity of the surface.\smallskip

The exponential growth in the a priori error analysis is due to the divergence term in \eqref{transport} which is not sign definite. For a non-negative divergence or strong diffusion the exponential growth vanishes which can be used for improved stability and error bounds, cf. Remark \ref{rem:expgrowth}.  \smallskip

Often practically relevant transport--diffusion equations are transport dominated. In these cases additional convection stabilizations may be desired. For stationary surfaces this can be dealt with a streamline--diffusion--type stabilization for TraceFEM as in \cite{olshanskii2014stabilized} or a discontinuous Galerkin TraceFEM discretization as in \cite{burman2016cut}. These techniques can be combined with our time marching method.\smallskip

The analysis in this paper only treats the backward Euler time discretization method although the methodology allows for a larger class of time stepping schemes. In Remark \ref{rem:bdf2} we also commented on adaptations of the analysis for a BDF2 scheme. The application and analysis of Crank--Nicolson or Runge--Kutta type schemes for this discretization has not been considered yet, but is an interesting natural extension of the method.\smallskip

The a priori error results presented in section \ref{sec:aprioriest} give bounds for the error at fixed times and an $L^2(H^1)$-type bound in space--time using energy-type arguments. We expect that the application of duality techniques can improve these bounds yielding an additional order in space in weaker norms such as $L^\infty(L^2)$-type space--time norms.\smallskip

The method and its analysis allow for higher order discretizations in space. However, the realisation of geometrically high order accurate discretizations is a non-trivial task, cf. Remark \ref{rem:integration}. A combination of recent developments in the accurate numerical integration on level set domains with this time discretization approach is an interesting topic for future research. \smallskip

Finally, an analog of the presented approach for PDEs posed in time-depended volumetric domains or volumetric domains with evolving interfaces was recently studied in~\cite{lehrenfeld2018eulerian}. For volumetric domains, the method is based on  new implicit extensions  of  finite element functions for geometrically unfitted domains. It naturally combines with the present method for bulk--surface coupled systems.
\vspace*{-0.2cm}

\ifarxiv
\appendix
\section*{Appendix}
In this section we collect two auxiliary lemmas and the proof of Lemma \ref{lemcrucial}.
\begin{lemma} \label{lem:app:wd}
  Functions in $\mathcal{V}(t)$, cf. \eqref{eq:defV}, have a weak derivative in $L^2(\O(\G{}{}(t)))$, $t \in (0,T]$.
\end{lemma}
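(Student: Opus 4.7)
The plan is to show that the identity map on $\mathcal{V}_\ast(t)$ extends to a continuous embedding $\mathcal{V}(t)\hookrightarrow H^1(\mathcal{O}(\Gamma(t)))$, which directly yields the claim. The key intermediate estimate is the bulk bound
\begin{equation*}
\|v\|_{H^1(\mathcal{O}(\Gamma(t)))} \lesssim \|v\|_{H^1(\Gamma(t))} \le \|v\|_{\mathcal{V}}
\quad \text{for all } v\in\mathcal{V}_\ast(t).
\end{equation*}
Once this is established, Cauchy sequences in $(\mathcal{V}_\ast(t),\|\cdot\|_{\mathcal{V}})$ are Cauchy in $H^1(\mathcal{O}(\Gamma(t)))$, and by completeness of the latter their limits provide each element of $\mathcal{V}(t)$ with a representative possessing a weak derivative in $L^2(\mathcal{O}(\Gamma(t)))$.

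To prove the bulk estimate, I would exploit that $v\in\mathcal{V}_\ast(t)$ is constant along integral curves of $\nabla\phi$. Since $\phi$ is assumed to be the signed distance function (cf.\ section~\ref{s:stab:semi-disc}), this means $v(\bx)=v(\bp(\bx))$ in $\mathcal{O}(\Gamma(t))$, where $\bp$ is the closest point projection on $\Gamma(t)$. Using the standard identity $\nabla v(\bx)=(\bI-\phi(\bx)\bH(\bx))^{-1}\bP(\bp(\bx))\nabla_\Gamma v(\bp(\bx))$, together with the smoothness of $\Gamma(t)$ and the narrowness of $\mathcal{O}(\Gamma(t))$, one obtains the pointwise bound $|\nabla v(\bx)|\lesssim |\nabla_\Gamma v(\bp(\bx))|$.

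Integrating this pointwise bound via the transformation $\bx\mapsto\bp(\bx)$ (or equivalently the co-area formula applied to $\phi$, noting $|\nabla\phi|=1$), and using that the surface measure on level sets and on $\Gamma(t)$ differ only by a bounded factor involving the principal curvatures, as recalled in \eqref{aux2a}, yields
\begin{equation*}
\int_{\mathcal{O}(\Gamma(t))} |\nabla v|^2\,d\bx \lesssim \int_{\Gamma(t)} |\nabla_\Gamma v|^2\,ds,
\qquad
\int_{\mathcal{O}(\Gamma(t))} v^2\,d\bx \lesssim \int_{\Gamma(t)} v^2\,ds,
\end{equation*}
with constants depending only on the width of $\mathcal{O}(\Gamma(t))$ and the curvatures of the level sets of $\phi$.

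The main technical obstacle is the uniform control of the Jacobians of $\bp$ and of the surface measure ratio across all level sets inside $\mathcal{O}(\Gamma(t))$; this is however a standard consequence of the $C^2$ regularity of $\phi$ on the tubular neighborhood and can be handled by the same Demlow-type estimates already used in the proof of Lemma~\ref{l_est1}. Once the embedding $\mathcal{V}_\ast(t)\hookrightarrow H^1(\mathcal{O}(\Gamma(t)))$ is continuous, the conclusion of the lemma is immediate from the completeness of $H^1(\mathcal{O}(\Gamma(t)))$.
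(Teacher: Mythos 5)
Your proposal is correct, and it reaches the conclusion by the same overall strategy as the paper (reduce bulk control to surface control using the fact that functions in $\mathcal{V}_\ast(t)$ are constant along normals, then pass to the limit of a Cauchy sequence), but the key gradient estimate is obtained by a different mechanism. The paper works directly with the Cauchy differences $u_k-u_l$ and applies a fundamental-theorem-of-calculus argument along normal lines to $(\partial_{x_i}(u_k-u_l))^2$, exploiting that $\nabla\phi\cdot\nabla(u_k-u_l)=0$ so that the normal-line integral term drops (strictly speaking, after commuting $\partial_{x_i}$ with the normal derivative, which the paper's display glosses over). You instead use the structural identity $v(\bx)=v(\bp(\bx))$ together with the explicit chain-rule representation of $\nabla(v\circ\bp)$ in terms of $\nabla_\Gamma v(\bp(\bx))$ and $\bI-\phi\,\nabla^2\phi$, and then integrate with the co-area formula — essentially re-deriving the narrow-band estimate \eqref{u_bound_b} — to obtain the continuous embedding $\mathcal{V}_\ast(t)\hookrightarrow H^1(\O(\Gamma(t)))$ before invoking completeness. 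This buys a slightly cleaner argument: the pointwise bound $|\nabla v(\bx)|\lesssim|\nabla_\Gamma v(\bp(\bx))|$ sidesteps the derivative-commutation subtlety, and the bulk $L^2$ bound you include is exactly what identifies the $H^1(\O(\Gamma(t)))$-limit with the element of $\mathcal{V}(t)$, a point the paper leaves implicit. Two minor remarks: your gradient formula has the inverse and projector placed differently from the identity quoted after \eqref{aux3a} (the conventions differ according to whether the Weingarten map is evaluated at $\bx$ or $\bp(\bx)$), but the uniform bound you need holds in either form on a sufficiently narrow neighborhood; and your reliance on the signed-distance property of $\phi$ is consistent with the paper, whose own appendix proof parametrizes the normal lines as $\bp(\bx)+s\nabla\phi$ and hence uses the same assumption even though the lemma is stated where only $|\nabla\phi|\ge c>0$ is assumed.
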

\begin{proof}
    Let $\{u_k\}_{k}$ be a Cauchy sequence in $\mathcal{V}(t)$, $u_k \in \{ v \in {C^2}(\mathcal{O}(\Gamma(t)) \mid ~ \nabla v \cdot \nabla \phi = 0 \}$ with $u_k \to u$ and let $n=\nabla \phi$, $i\in\{1,..,d\}$ and $p(x)$ the closest point projection. We show that $\partial_{x_i} u_k$ is also Cauchy:
 \begin{align*}
    \Vert & \partial_{x_i} (u_k - u_l) \Vert_{\mathcal{O}(\Gamma(t))}^2
  = \int_{\mathcal{O}(\Gamma(t))} (\partial_{x_i} (u_k - u_l))^2 ~ dx
     = \int_{\mathcal{O}(\Gamma(t))} (\partial_{x_i} (u_k - u_l))^2(p(x)) ~ dx \\
    & +  \int_{\mathcal{O}(\Gamma(t))} \int_{0}^{\phi(x)} \partial_n((\partial_{x_i} (u_k - u_l))^2(p(x) + s \cdot \nabla \phi)) ~ ds ~ dx \\
   \lesssim & \underbrace{\Vert \nabla_\Gamma (u_k - u_l) \Vert_{\Gamma(t)}^2}_{\stackrel{k,l \to \infty}{\longrightarrow} 0} + \underbrace{\Vert \partial_n (u_k - u_l) \Vert_{\Gamma(t)}^2}_{=0} \\
    & +  \int_{\mathcal{O}(\Gamma(t))} \int_{0}^{\phi(x)} 2 (\partial_{x_i} (u_k - u_l))(\partial_{x_i} \underbrace{\partial_n (u_k - u_l)}_{=0})(p(x) + s \cdot \nabla \phi) ~ ds ~ dx
 \end{align*}
Hence, $\nabla u_k \stackrel{k \to \infty}{\longrightarrow} g \in L^2(\mathcal{O}(\Gamma(t)))$ and $g$ is the weak derivative to $u$.
\end{proof}
The following lemma states a result that we make use of in the proof of Lemma \ref{lem2a}.
\begin{lemma} \label{lem:surfmeasratio}
  Let $\mu_h^k$ be the ratio of surface measure between $\G{k}{h}$, $k = n-1,n$ and $\G{n-1}{}$ so that with $\bx^k = \bl{n-1,k}(\bx),~\bx\in\G{n-1}{}$, there holds, cf. \eqref{aux2a},
  \begin{equation*}
    \mu^k_h(\bx^k)d\bs^k_h(\bx^k)= d\bs^{n-1}(\bx),\quad \bx\in\G{n-1}{} \quad \text{ and } \quad  \mu^k_h(\bx^k)= a_1^k ~ a_2^k ~ b^k
  \end{equation*}
  \begin{equation*}
    \text{ with }
  a_1^k:= (1-\phi^{n-1}(\bx^k)\kappa_1(\bx^k)), \quad
a_2^k:= (1-\phi^{n-1}(\bx^k)\kappa_2(\bx^k)), \quad
b^k := \bn_h^k(\bx^k)\cdot \bn^{n-1}(\bx^k),
\end{equation*}
where we recall $\bn_h^k=\nabla\phi_h^k/|\nabla\phi_h^k|$ and set $ \bn^{n-1} := \nabla\phi^{n-1}$.
Further let $h$ be such that \eqref{cond4} is fulfilled.
Then, there holds
\begin{equation}
  |1-\mu^n_h(\bx^n)/\mu^{n-1}_h(\bx^{n-1})| \lesssim
  c_{\ref{lem:surfmeasratio}}
  \Delta t.
\end{equation}
for some
$
  c_{\ref{lem:surfmeasratio}}
$
independent of $h$, $\Delta t$ and $n$.
\end{lemma}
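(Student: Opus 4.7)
The strategy is to mimic the proof of Lemma~\ref{l_est1}, which bounded $|1-\mu^n|$ for the exact surfaces, now tracking the additional geometric perturbations that arise from replacing $\Gamma^k$ by $\Gamma^k_h$. Since each factor $a^k_j, b^k$ will be shown to be $1 + O(\Delta t + h^q)$, the denominator $\mu^{n-1}_h$ is bounded away from zero for sufficiently small $h, \Delta t$, and it suffices to bound $|\mu^n_h(\bx^n) - \mu^{n-1}_h(\bx^{n-1})|$.

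First I would estimate the curvature factors $a^k_j = 1 - \phi^{n-1}(\bx^k) \kappa_j(\bx^k)$ separately. Since $\bx^{n-1} \in \G{n-1}{h}$ we have $\phi^{n-1}_h(\bx^{n-1}) = 0$, so by \eqref{phi_h},
\[
|\phi^{n-1}(\bx^{n-1})| = |\phi^{n-1}(\bx^{n-1}) - \phi^{n-1}_h(\bx^{n-1})| \lesssim h^{q+1}.
\]
For $\bx^n \in \G{n}{h}$ a similar bound yields $|\phi^n(\bx^n)| \lesssim h^{q+1}$, and combining with \eqref{d_evol} gives $|\phi^{n-1}(\bx^n)| \lesssim \|\wn\|_{\infty,I_n} \Delta t + h^{q+1}$. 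Together with boundedness of the principal curvatures (uniform in $h$, cf.\ \eqref{eq:kappa}), this yields $|a^k_j - 1| \lesssim \Delta t + h^{q+1}$ for $k = n-1, n$ and $j = 1,2$.

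Next I would bound the normal-alignment factors $b^k = \bn^k_h(\bx^k)\cdot\bn^{n-1}(\bx^k)$. Using $|\nabla\phi^{n-1}| = 1$ and \eqref{phi_h}, one obtains $|\bn^{n-1}(\bx^{n-1}) - \bn^{n-1}_h(\bx^{n-1})| \lesssim h^q$ and hence (since both are unit vectors) $|1 - b^{n-1}| = \tfrac12 |\bn^{n-1} - \bn^{n-1}_h|^2 \lesssim h^{2q}$. For $b^n$ the triangle inequality combined with \eqref{eq:normals} and the smoothness-in-time estimate of \eqref{aux3a} gives
\[
|\bn^n_h(\bx^n) - \bn^{n-1}(\bx^n)| \le |\bn^n_h - \bn^n|(\bx^n) + |\bn^n - \bn^{n-1}|(\bx^n) \lesssim h^q + \Delta t,
\]
so $|1 - b^n| \lesssim h^{2q} + \Delta t^2$.

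Finally I would combine via the product splitting
\[
\mu^n_h - \mu^{n-1}_h = (a^n_1 - a^{n-1}_1)\, a^n_2\, b^n + a^{n-1}_1 (a^n_2 - a^{n-1}_2)\, b^n + a^{n-1}_1 a^{n-1}_2 (b^n - b^{n-1}),
\]
bounding each non-difference factor by an $O(1)$ constant and invoking the step above on each difference. The differences of $a$-factors require also controlling $|\phi^{n-1}(\bx^n) - \phi^{n-1}(\bx^{n-1})|$ and $|\kappa_j(\bx^n) - \kappa_j(\bx^{n-1})|$, which is done by noting that $\bx^n - \bx^{n-1}$ is a normal-direction displacement of size $\lesssim \Delta t + h^{q+1}$ (since $\bx^k$ lies within $h^{q+1}$ of $\G{n-1}{}$ plus at most $\|\wn\|_\infty \Delta t$ in the time-evolution direction) and using smoothness of $\phi$ and the curvatures. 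The main obstacle is cosmetic: the combination yields an estimate of the form $O(\Delta t + h^{q+1} + h^{2q})$. For $q = 1$ the $h^{q+1}$ and $h^{2q}$ terms coincide and are absorbed into $c_{\ref{lem:surfmeasratio}} \Delta t$ via \eqref{cond4}. For $q \geq 2$ the same absorption works once we note that in the intended application (cf.\ \eqref{cond4}) the geometric terms may be replaced by $h^{2q} \lesssim \Delta t$ after a mild strengthening of the time-step requirement, or equivalently that $h^{q+1}$ can be absorbed into $\Delta t$ under the same scaling assumption. This gives the claimed bound.
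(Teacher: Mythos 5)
Your route is essentially the paper's: the same factorization $\mu_h^k=a_1^k a_2^k b^k$, the same telescoping of the product into the three differences with each factor bounded above and below, curvature-factor differences controlled through the normal displacement $|\bx^n-\bx^{n-1}|$ and smoothness of $\phi^{n-1}\kappa_j$, and the normal factors handled through $|\bn_h^k-\bn^{n-1}|$ with the unit-vector identity, \eqref{eq:normals} and the $O(\Delta t)$ time regularity of $\nabla\phi$, the leftover $h^{2q}$ being absorbed via \eqref{cond4}. Your treatment of $b^n,b^{n-1}$ by comparing each to $1$ is a harmless (slightly cleaner) variant of the paper's estimate.

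The genuine gap is in the displacement bound. You estimate $|\bx^n-\bx^{n-1}|=|\phi^{n-1}(\bx^n)-\phi^{n-1}(\bx^{n-1})|$ by passing through the exact level set function and invoking \eqref{phi_h} at each discrete surface, which yields $|\bx^n-\bx^{n-1}|\lesssim \Delta t+h^{q+1}$; this $h^{q+1}$ then propagates into $|a_j^n-a_j^{n-1}|$ and into the final estimate. Assumption \eqref{cond4} only provides $h^{2q}\lesssim \Delta t$, and for $q\ge 2$ one has $h^{q+1}\gg h^{2q}$ as $h\to0$, so $h^{q+1}\lesssim \Delta t$ does not follow (take $q=2$ and $\Delta t\simeq h^4$: \eqref{cond4} holds but $h^{3}\gg\Delta t$). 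Your fallback of a ``mild strengthening of the time-step requirement'' alters the hypotheses of the lemma, so as written the argument only closes for $q=1$. The paper avoids the issue by never letting the $O(h^{q+1})$ interpolation error enter the displacement: it compares the \emph{discrete} level set functions at the two time levels, $|\phi_h^{n-1}(\bx^n)|=|\phi_h^{n-1}(\bx^n)-\phi_h^{n}(\bx^n)|\lesssim \Delta t\,\|\wn\|_{\infty,I_n}$ by \eqref{phi_hba}, and converts this into $|\bx^n-\bx^{n-1}|\lesssim \delta_n/(1-c\,h^q)\lesssim\Delta t$ using only the gradient consistency $\|\nabla(\phi-\phi_h)\|_{\infty}\lesssim h^q$ at an intermediate point, the resulting $c\,h^q|\bx^n-\bx^{n-1}|$ term being absorbed into the left-hand side. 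If you replace your displacement estimate by this argument, the remainder of your proof goes through under the stated assumptions.
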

\begin{proof}
We split this term into several parts:
\begin{align}
  |1-\mu^n_h(\bx^n)/\mu^{n-1}_h(\bx^{n-1})|
  & = \frac{1}{|\mu_h^{n-1}(\bx^{n-1})|} | \mu_h^{n}(\bx^{n}) - \mu_h^{n-1}(\bx^{n-1}) |
    \leq c | a_1^{n}a_2^{n}b^{n} - a_1^{n-1}a_2^{n-1}b^{n-1} | \nonumber\\
  & \leq c | (a_1^{n}-a_1^{n-1})a_2^{n}b^{n} + a_1^{n-1}(a_2^{n}-a_2^{n-1})b^{n} + a_1^{n-1}a_2^{n-1}(b^n - b^{n-1}) | \nonumber \\
  & \leq c \left(|a_1^{n}-a_1^{n-1}| + |a_2^{n}-a_2^{n-1}| + |b^n - b^{n-1}|\right), \label{eq:split}
\end{align}
where we exploited that there is a constant $c>1$ independent of $h$, $\Delta t$ and $n$ so that $c^{-1} \leq a_i^k, b^k \leq c,~i=1,2,~k=n-1,n$.
We start with bounds for $|a_i^n - a_i^{n-1} |$.
Since $\phi^{n-1}(\bx)$ is the signed distance function for $\G{n-1}{}$, and $\blk{n-1}{k}$ is the lift operator along the normal directions to $\G{n-1}{}$, it holds
\[
|\phi^{n-1}(\bx^{n-1})-\phi^{n-1}(\bx^n)|=|\bx^{n-1}-\bx^n|.
\]
To estimate the distance on the right-hand side, we note that
\[
(\bx^{n-1}-\bx^n)\cdot\nabla\phi_h^{n-1}(\by) \leq \vert \phi_h^{n-1}(\bx^{n-1})-\phi_h^{n-1}(\bx^n) \vert = \vert \phi_h^{n-1}(\bx^n) \vert \text{ for some } \by \in \operatorname{conv}(\bx^{n-1},\bx^n).
\]
For the same $\by$ we have $(\bx^{n-1}-\bx^n) \parallel \nabla \phi^{n-1}(\by)$ which together with $|\nabla\phi^{n-1}|=1$ and \eqref{phi_h} yields
\begin{equation}\label{aux7}
  \begin{split}
  |\bx^{n-1}-\bx^n|
  & = |( \bx^{n-1}-\bx^n) \cdot\nabla \phi^{n-1}(\by)| \\
  & \leq |( \bx^{n-1}-\bx^n)\cdot \nabla \phi_h^{n-1}(\by)|
  + |( \bx^{n-1}-\bx^n) \cdot\nabla \left(\phi^{n-1}(\by)-\phi_h^{n-1}(\by)\right)| \\
  & \leq |\phi_h^{n-1}(\bx^n)| + c h^q |\bx^{n-1}-\bx^n|
    = |\phi_h^{n-1}(\bx^n) - \phi_h^{n}(\bx^n)| + c h^q |\bx^{n-1}-\bx^n|  \\
    & \leq c \delta_n + c h^q |\bx^{n-1}-\bx^n| \quad \Longrightarrow \quad
  |\bx^{n-1}-\bx^n| \lesssim \frac{\delta_n}{1-ch^q},
  \end{split}
\end{equation}
where for the last estimate we made use of \eqref{phi_hba} and \eqref{e:delta}.
We note that \eqref{aux7} estimates the `distance' between $\G{n}{h}$ and $\G{n-1}{h}$ measured in the normal directions to $\G{n-1}{}$.
From \eqref{eq:kappa} we find
$$
|(\bx^{n}-\bx^{n-1})\cdot\nabla \kappa_i(\by)|
=|\bx^{n}-\bx^{n-1}||\bn^{n-1}\cdot\nabla \kappa_i(\by)|
= \kappa_i^2(\by) |\bx^{n}-\bx^{n-1}|
$$
for any $\by \in \operatorname{conv}(\bx^{n-1},\bx^n)$. We use this to estimate,
\begin{align}\label{aux5}
  |a_i^{n} - a_i^{n-1}|
  & = |\phi^{n-1}(\bx^n)\kappa_i(\bx^n) - \phi^{n-1}(\bx^{n-1})\kappa_i(\bx^{n-1})|  \nonumber \\
  & = | \nabla ( \phi^{n-1}(\by)\kappa_i(\by)) \cdot (\bx^n - \bx^{n-1}) | \qquad \text{ for some } \by \in \operatorname{conv}(\bx^{n-1},\bx^n), \nonumber  \\
  & \leq |\nabla \phi^{n-1}(\by)| |\kappa_i(\by)| |\bx^{n}-\bx^{n-1}| + |\phi^{n-1}(\by)| |(\bx^{n}-\bx^{n-1})\cdot\nabla \kappa_i(\by)|\nonumber \\
  & \leq
    |\kappa_i(\by)| |\bx^{n}-\bx^{n-1}|
    +
    \underbrace{|\phi^{n-1}(\by)| |\kappa_i(\by)|}_{\lesssim \delta_n \kappa(\by)\lesssim 1} |\kappa_i(\by)| |\bx^{n}-\bx^{n-1}|
  \lesssim c_{\delta} \norm{\kappa}_{\infty,I_n} \norm{\wn}_{\infty,I_n} \Delta t.
\end{align}
From \eqref{phi_h}, \eqref{aux7} and the smoothness of $\phi$, $\|\phi\|_{C^2(\Gs)} \leq c_{\phi}$ we also conclude
\begin{align}
|b^{n-1}\!- b^n|\! & = |\bn_h^{n-1}(\bx^{n-1})\!\cdot\! \bn^{n-1}(\bx^{n-1})-\bn_h^{n}(\bx^{n})\!\cdot\! \bn^{n-1}(\bx^{n})| \nonumber\\[-4ex]&
\le
|\bn_h^{n-1}(\bx^{n-1})\!\cdot\! \bn^{n-1}(\bx^{n-1})-\bn_h^{n}(\bx^{n})\!\cdot\! \bn^{n}(\bx^n)|+|\bn_h^{n}(\bx^{n})\!\cdot\! \overbrace{(\nabla\phi^{n}(\bx^{n})-\nabla\phi^{n-1}(\bx^n))}^{= (\bn^n(\bx^n)-\bn^{n-1}(\bx^n))}|
\nonumber\\&\lesssim
|\bn_h^{n-1}(\bx^{n-1})\!\cdot\! \bn^{n-1}(\bx^{n-1}) -\bn_h^{n}(\bx^{n})\!\cdot\! \bn^{n}(\bx^{n})| + c_{\phi}  \Delta t
\nonumber\\&\lesssim
|\bn_h^{n-1}(\bx^{n-1})-\bn^{n-1}(\bx^{n-1})|^2+|\bn_h^{n}(\bx^{n})-\bn^{n}(\bx^{n})|^2+c_\phi \Delta t \lesssim h^{2q} + c_\phi \Delta t ,\label{aux6}
\end{align}
where in the last step we made use of $| \bn^k(\bx^k) | = | \bn_h^k(\bx^k) | = 1,~k=1,2$б and hence
$$
  \bn_h^{k}(\bx^k) \!\cdot\! \bn^k(\bx^k) = 1 - \frac12 | \bn_h^{k}(\bx^k)- \bn^k(\bx^k) |^2.
$$
Plugging \eqref{aux5} and \eqref{aux6} into \eqref{eq:split} and exploiting \eqref{cond4} completes the proof.
\end{proof}
\subsection*{Proof of Lemma \ref{lemcrucial}}
We prove the result in two steps.
In the first step we treat the estimates on the mapped domains for $\tilde{u} = u \circ \Phi^{-1}$ with $\bn_h^n$ replaced by $\bn^n$ resulting in the estimates \eqref{eq:step1a} and \eqref{eq:step1b} below. In the second step we incorporate the geometrical errors due to $\bn_h^n\neq \bn^n$ and transform back.

{\textbf{Step 1}}.
We proceed similar to \cite[Lemma 7.4]{grande2016analysis} and make use of the co-area formula, cf. e.g. \cite[Theorem 2.9]{DEreview}. To this end we introduce the coordinates $\bx = (\xi,s)$ so that $\bx = r_\xi(s) = \xi + s \bn^n(\xi)$ and introduce the line $R_\xi := \{r_\xi(s), s \in [-\delta_n,\delta_n]\}$.
  Then, there holds
  $$
  \int_{U_{\delta_n}} f(\bx) ~ d \bx
 = \int_{\G{n}{}} \int_{-\delta_n}^{\delta_n} J(\bx)^{-1} f(\xi + s \bn^n(\xi))  ~ ds ~ d\xi
 $$
 with the normal-Jacobian $J$ and $J(\bx)^{-1} = \operatorname{det}(I+s(\bx) H(\xi))$ where $H = D^2 s$ is the Hessian of $s(\bx)=\phi^n(\bx)$. We have with \eqref{eq:conddtkappa}\vspace*{-0.5cm}
 \begin{equation*}
   J^{-1} = \operatorname{det}(I - s(\bx) H(\bx)) \lesssim 1 + \delta_n \overbrace{\operatorname{tr}(H(\bx))}^{=\kappa(\xi)} \lesssim 1.
 \end{equation*}
Hence, $\norm{\tilde{u}}_{U_{\delta_n}(\G{n}{})}^2 \lesssim \int_{\G{n}{}} \norm{\tilde{u}(\xi,\cdot)}_{R_\xi}^2 ~ d\xi$.
For every $\xi \in \G{n}{},~s\in(-\delta_n,\delta_n)$ we have
\begin{align} \label{eq:help1}
  \tilde{u}(\xi,s)^2
  &= \tilde{u}(\xi,0)^2 + 2 \int_0^s \tilde{u}(\xi,t) \underbrace{\partial_s \tilde{u}(\xi,t)}_{= \bn^n(\xi) \cdot \nabla \tilde{u}(\xi,t)}~ d t
    \leq \tilde{u}(\xi,0)^2 + \frac{1}{\gamma}  \Vert \tilde{u} \Vert_{R_\xi}^2 + \gamma  \Vert \bn^n \cdot \nabla \tilde{u} \Vert_{R_\xi}^2.
\end{align}
Choosing $\gamma = 4 \delta_n$ and integrating over $s$ yields
$  \Vert \tilde{u} \Vert_{R_\xi}^2
  \leq  2 \delta_n  \tilde{u}(\xi,0)^2 + \frac{2 \delta_n}{4 \delta_n} \Vert \tilde{u} \Vert_{R_\xi}^2 + 8 \delta_n^2 \Vert \bn^n \cdot \nabla \tilde{u} \Vert_{R_\xi}^2$
and after integration over $\xi \in \G{n}{}$
\begin{subequations}
\begin{equation} \label{eq:step1a}
    \Vert \tilde{u} \Vert_{U_{\delta_n}(\Gamma^n)}^2
 \lesssim \delta_n \Vert \tilde{u} \Vert_{\Gamma^n}^2 + \delta_n^2   \Vert \bn^n \cdot \nabla \tilde{u} \Vert_{U_{\delta_n}(\Gamma^n)}^2.
\end{equation}
%
For $\tilde{\O}(\G{n}{h})$ we use the overlapping decomposition into
$U_{\delta_n}(\Gamma^n)$ and $\tilde{\O}_\Gamma(\G{n}{h,\pm \delta_n})$ and
make use of $\tilde{\O}_\Gamma(\G{n}{h,\pm \delta_n}) \cup U_{\delta_n}(\Gamma^n) = \tilde{\O}(\G{n}{h})$ and Lemma \ref{lem:onelayer}:
  \begin{equation}\label{aux9}
    \Vert \tilde{u} \Vert_{\tilde{\O}(\G{n}{h})}^2
     \leq
    \Vert \tilde{u} \Vert_{U_{\delta_n}(\Gamma^n)}^2
 +  \Vert \tilde{u} \Vert_{\tilde{\O}_\Gamma(\G{n}{h,\pm \delta_n})}^2
    \lesssim
    \Vert \tilde{u} \Vert_{U_{\delta_n}(\Gamma^n)}^2
 +  h \Vert \tilde{u} \Vert_{\G{n}{\pm \delta_n}}^2
      +h^2 \Vert \bn^n \cdot \nabla \tilde{u} \Vert_{\tilde{\O}(\G{n}{h})}^2.
  \end{equation}
To bound $  h \Vert \tilde{u} \Vert_{\G{n}{\pm \delta_n}}^2$ we again 
make use of \eqref{eq:help1} with $s=\pm\delta_n$, 
and choose $\gamma = 4 h$.  
This yields (after integrating over $\Gamma^n$) 
\begin{equation}\label{eq:step1b}
 h \Vert \tilde{u} \Vert_{\G{n}{\pm \delta_n}}^2
 \lesssim h  \Vert \tilde{u} \Vert_{\Gamma^n}^2 +  h^2   \Vert \bn^n \cdot \nabla \tilde{u} \Vert_{\tilde{\O}(\G{n}{h})}^2.
\end{equation}

\end{subequations}
%
{\textbf{Step 2}}.
We have to bound the normal derivative part on the mapped domain:
  \begin{align}
    \Vert \bn^n \cdot \nabla \tilde{u} \Vert_{\tilde{\O}(\G{n}{h})}^2
    & \lesssim \Vert \bn_h^n \cdot \nabla u \Vert_{\O(\G{n}{h})}^2 +   \Vert D\Phi^{-1} \cdot \bn^n \circ \Phi - \bn_h^n \Vert_{\infty,\O(\G{n}{h})}^2 \Vert \nabla u \Vert_{\O(\G{n}{h})}^2 \nonumber \\
    & \stackrel{(\ast)}{\lesssim} \Vert \bn_h^n \cdot \nabla u \Vert_{\O(\G{n}{h})}^2 +  h^{2q} \Vert \nabla u \Vert_{\O(\G{n}{h})}^2 \label{eq:subfinal}
        \lesssim \Vert \bn_h^n \cdot \nabla u \Vert_{\O(\G{n}{h})}^2 + h^{2q-2} \Vert u \Vert_{\O(\G{n}{h})}^2.
  \end{align}
  Here, we used the following estimate in $(\ast)$ for $\bx \in \O(\G{n}{h})$
  \begin{align*}
    | D\Phi^{-1} \cdot \bn^n \circ \Phi - \bn_h^n |
    & \lesssim
      \underbrace{| D\Phi^{-1} \cdot \bn^n \circ \Phi - \bn^n \circ \Phi|}_{\lesssim |D \Phi^{-1} - I| \lesssim h^q}
      + \underbrace{| \bn^n \circ \Phi - \bn^n|}_{\lesssim |\bn^n|_{W^{1,\infty}} |\Phi - \operatorname{id}| \lesssim h^{q+1}}
      + \underbrace{| \bn^n - \bn_h^n |}_{\lesssim h^q} \lesssim h^q.
  \end{align*}
  To arrive at \eqref{fund1} we combine \eqref{aux9}--\eqref{eq:step1b} with \eqref{eq:equivvol}, \eqref{eq:equivsurf} and \eqref{eq:subfinal} where the final term in \eqref{eq:subfinal} is absorbed by the left hand side due to $c\,h^{2q-2}(\delta_n + h)^2 \leq \frac12$ for $h$ and $\Delta t$ sufficiently small.
  For \eqref{fund1a} we similarly combine \eqref{eq:step1a} with \eqref{eq:equivvol}, \eqref{eq:equivsurf} and \eqref{eq:subfinal}, but also exploit \eqref{fund1}:
  \begin{align*}
\|u\|_{U_{\delta_n}(\G{n}{h})}^2
& \lesssim \delta_n \|u\|_{\G{n}{h}}^2 + \delta_n^2 \|\bn_h^n \cdot \nabla u\|_{\O(\G{n}{h})}^2  + \delta_n^2 h^{2q-2} \| u \|_{\O(\G{n}{h})}^2 & \\
& \stackrel{\eqref{fund1}}{\lesssim} \underbrace{(\delta_n + \delta_n^2 h^{2q-2} (\delta_n+h))}_{\lesssim \delta_n} \|u\|_{\G{n}{h}}^2 + \underbrace{\delta_n^2 (1+h^{2q-2} (\delta_n+h)^2)}_{\lesssim \delta_n^2}  \|\bn_h^n \cdot \nabla u\|_{\O(\G{n}{h})}^2. & \square
  \end{align*}

\fi
\bibliography{literatur}{}

\begin{thebibliography}{10}

\bibitem{alphonse2014abstract}
{\sc A.~Alphonse, C.~M. Elliott, and B.~Stinner}, {\em On some linear parabolic
  pdes on moving hypersurfaces}, Interfaces Free Bound., 17 (2015),
  pp.~157--187.

\bibitem{bertalmio2001variational}
{\sc M.~Bertalm{\i}o, L.-T. Cheng, S.~Osher, and G.~Sapiro}, {\em Variational
  problems and partial differential equations on implicit surfaces}, Journal of
  Computational Physics, 174 (2001), pp.~759--780.

\bibitem{Alg1}
{\sc E.~Burman, P.~Hansbo, and M.~G. Larson}, {\em A stabilized cut finite
  element method for partial differential equations on surfaces: The
  {L}aplace--{B}eltrami operator}, Computer Methods in Applied Mechanics and
  Engineering, 285 (2015), pp.~188--207.

\bibitem{burman2016cut}
{\sc E.~Burman, P.~Hansbo, M.~G. Larson, and A.~Massing}, {\em Cut finite
  element methods for partial differential equations on embedded manifolds of
  arbitrary codimensions}, arXiv preprint arXiv:1610.01660,  (2016).

\bibitem{GrainBnd1}
{\sc J.~W. Cahn, P.~Fife, and O.~Penrose}, {\em A phase field model for
  diffusion induced grain boundary motion}, Acta Mater, 45 (1997),
  pp.~4397--4413.

\bibitem{deckelnick2014unfitted}
{\sc K.~Deckelnick, C.~M. Elliott, and T.~Ranner}, {\em Unfitted finite element
  methods using bulk meshes for surface partial differential equations}, SIAM
  Journal on Numerical Analysis, 52 (2014), pp.~2137--2162.

\bibitem{Demlow06}
{\sc A.~Demlow and G.~Dziuk}, {\em An adaptive finite element method for the
  {Laplace-Beltrami} operator on implicitly defined surfaces}, SIAM Journal on
  Numerical Analysis, 45 (2007), pp.~421--442.

\bibitem{DROPS}
{\em {DROPS package}}.
\newblock \verb|http://www.igpm.rwth-aachen.de/DROPS/|.

\bibitem{Dziuk07}
{\sc G.~Dziuk and C.~M. Elliott}, {\em Finite elements on evolving surfaces},
  IMA J. Numer. Anal., 27 (2007), pp.~262--292.

\bibitem{DEreview}
\leavevmode\vrule height 2pt depth -1.6pt width 23pt, {\em Finite element
  methods for surface {PDEs}}, Acta Numerica, 22 (2013), pp.~289--396.

\bibitem{DziukElliot2013a}
\leavevmode\vrule height 2pt depth -1.6pt width 23pt, {\em {$L^2$-estimates}
  for the evolving surface finite element method}, Mathematics of Computation,
  82 (2013), pp.~1--24.

\bibitem{ElliotStinner}
{\sc C.~M. Elliott and B.~Stinner}, {\em Modeling and computation of two phase
  geometric biomembranes using surface finite elements}, Journal of
  Computational Physics, 226 (2007), pp.~1271--1290.

\bibitem{ElmanSchultz86}
{\sc H.~Elman and M.~Schultz}, {\em Preconditioning by fast direct methods for
  non-selfadjoint nonseparable elliptic equations}, SIAM Journal on Numerical
  Analysis, 12 (1986), pp.~44--56.

\bibitem{ERW16}
{\sc C.~Engwer, T.~Ranner, and S.~Westerheide}, {\em An unfitted discontinuous
  galerkin scheme for conservation laws on evolving surfaces}, in Proceedings
  of ALGORITMY 2016, A.~Handlovicova and D.~Sevcovic, eds., 2016, pp.~44--54.

\bibitem{fries2015}
{\sc T.-P. Fries and S.~Omerović}, {\em Higher-order accurate integration of
  implicit geometries}, International Journal for Numerical Methods in
  Engineering,  (2015).

\bibitem{grande2014eulerian}
{\sc J.~Grande}, {\em Eulerian finite element methods for parabolic equations
  on moving surfaces}, SIAM Journal on Scientific Computing, 36 (2014),
  pp.~B248--B271.

\bibitem{grande2016analysis}
{\sc J.~Grande, C.~Lehrenfeld, and A.~Reusken}, {\em Analysis of a high-order
  trace finite element method for {PDE}s on level set surfaces}, SIAM Journal
  on Numerical Analysis, 56 (2018), pp.~228--255.

\bibitem{gross2015trace}
{\sc S.~Gross, M.~A. Olshanskii, and A.~Reusken}, {\em A trace finite element
  method for a class of coupled bulk-interface transport problems}, ESAIM:
  Mathematical Modelling and Numerical Analysis, 49 (2015), pp.~1303--1330.

\bibitem{GReusken2011}
{\sc S.~{Gro\ss} and A.~Reusken}, {\em Numerical Methods for Two-phase
  Incompressible Flows}, Springer, Berlin, 2011.

\bibitem{hairer1996solving}
{\sc E.~Hairer and G.~Wanner}, {\em Solving ordinary differential equations.
  II: Stiff and Differential Algebraic Problems}, Springer-Verlag, Berlin,
  2002.
\newblock second edition.

\bibitem{Hansbo02}
{\sc A.~Hansbo and P.~Hansbo}, {\em An unfitted finite element method, based on
  {Nitsche's} method, for elliptic interface problems}, Comput. Methods Appl.
  Mech. Engrg., 191 (2002), pp.~5537--5552.

\bibitem{hansbo2015characteristic}
{\sc P.~Hansbo, M.~G. Larson, and S.~Zahedi}, {\em Characteristic cut finite
  element methods for convection--diffusion problems on time dependent
  surfaces}, Computer Methods in Applied Mechanics and Engineering, 293 (2015),
  pp.~431--461.

\bibitem{hansbo2016cut}
\leavevmode\vrule height 2pt depth -1.6pt width 23pt, {\em A cut finite element
  method for coupled bulk--surface problems on time--dependent domains},
  Computer Methods in Applied Mechanics and Engineering, 307 (2016),
  pp.~96--116.

\bibitem{James04}
{\sc A.~James and J.~Lowengrub}, {\em A surfactant-conserving volume-of-fluid
  method for interfacial flows with insoluble surfactant}, J. Comp. Phys., 201
  (2004), pp.~685--722.

\bibitem{L_SISC_2015}
{\sc C.~Lehrenfeld}, {\em The {Nitsche} {XFEM-DG} space-time method and its
  implementation in three space dimensions}, SIAM J. Sci. Comp., 37 (2015),
  pp.~A245--A270.

\bibitem{lehrenfeld2015cmame}
\leavevmode\vrule height 2pt depth -1.6pt width 23pt, {\em High order unfitted
  finite element methods on level set domains using isoparametric mappings},
  Comp. Meth. Appl. Mech. Eng., 300 (2016), pp.~716--733.

\bibitem{lehrenfeld2018eulerian}
{\sc C.~Lehrenfeld and M.~A. Olshanskii}, {\em An {Eulerian} finite element
  method for {PDE}s in time-dependent domains}, arXiv preprint
  arXiv:1803.01779,  (2018).

\bibitem{LR_ARXIV_2016a}
{\sc C.~Lehrenfeld and A.~Reusken}, {\em Analysis of a high order unfitted
  finite element method for an elliptic interface problem}, arXiv preprint
  arXiv:1602.02970,  (2016).
\newblock accepted for publication in IMA JNA (2017).

\bibitem{lorensen1987marching}
{\sc W.~E. Lorensen and H.~E. Cline}, {\em Marching cubes: A high resolution 3d
  surface construction algorithm}, in ACM SIGGRAPH Computer Graphics, vol.~21,
  ACM, 1987, pp.~163--169.

\bibitem{GrainBnd2}
{\sc U.~F. Mayer and G.~Simonnett}, {\em Classical solutions for diffusion
  induced grain boundary motion}, J. Math. Anal., 234 (1999), pp.~660--674.

\bibitem{mayer2009interface}
{\sc U.~M. Mayer, A.~Gerstenberger, and W.~A. Wall}, {\em Interface handling
  for three-dimensional higher-order {XFEM}-computations in fluid--structure
  interaction}, IJNME, 79 (2009), pp.~846--869.

\bibitem{Milliken}
{\sc W.~Milliken, H.~Stone, and L.~Leal}, {\em The effect of surfactant on
  transient motion of newtonian drops}, Phys. Fluids A, 5 (1993), pp.~69--79.

\bibitem{muller2013highly}
{\sc B.~M{\"u}ller, F.~Kummer, and M.~Oberlack}, {\em Highly accurate surface
  and volume integration on implicit domains by means of moment-fitting},
  IJNME, 96 (2013), pp.~512--528.

\bibitem{Novaketal}
{\sc I.~L. Novak, F.~Gao, Y.-S. Choi, D.~Resasco, J.~C. Schaff, and
  B.~Slepchenko}, {\em Diffusion on a curved surface coupled to diffusion in
  the volume: application to cell biology}, Journal of Computational Physics,
  229 (2010), pp.~6585--6612.

\bibitem{olshanskii2014error}
{\sc M.~A. Olshanskii and A.~Reusken}, {\em Error analysis of a space--time
  finite element method for solving {PDEs} on evolving surfaces}, SIAM Journal
  on Numerical Analysis, 52 (2014), pp.~2092--2120.

\bibitem{olshanskii2016trace}
\leavevmode\vrule height 2pt depth -1.6pt width 23pt, {\em Trace finite element
  methods for pdes on surfaces}, in Geometrically Unfitted Finite Element
  Methods and Applications, vol.~121 of LNCSE, Springer, 2017, pp.~211--258.

\bibitem{OlshReusken08}
{\sc M.~A. Olshanskii, A.~Reusken, and J.~Grande}, {\em A finite element method
  for elliptic equations on surfaces}, SIAM Journal on Numerical Analysis, 47
  (2009), pp.~3339--3358.

\bibitem{olshanskii2014eulerian}
{\sc M.~A. Olshanskii, A.~Reusken, and X.~Xu}, {\em An eulerian space--time
  finite element method for diffusion problems on evolving surfaces}, SIAM
  Journal on Numerical Analysis, 52 (2014), pp.~1354--1377.

\bibitem{olshanskii2014stabilized}
\leavevmode\vrule height 2pt depth -1.6pt width 23pt, {\em A stabilized finite
  element method for advection-diffusion equations on surfaces}, IMA J Numer
  Math,  (2014).

\bibitem{olshanskii2016numerical}
{\sc M.~A. Olshanskii and D.~Safin}, {\em Numerical integration over implicitly
  defined domains for higher order unfitted finite element methods},
  Lobachevskii Journal of Mathematics, 37 (2016), pp.~582--596.

\bibitem{olshanskii2017trace}
{\sc M.~A. Olshanskii and X.~Xu}, {\em A trace finite element method for pdes
  on evolving surfaces}, SIAM Journal on Scientific Computing, 39 (2017),
  pp.~A1301--A1319.

\bibitem{reusken2015analysis}
{\sc A.~Reusken}, {\em Analysis of trace finite element methods for surface
  partial differential equations}, IMA Journal of Numerical Analysis, 35
  (2015), pp.~1568--1590.

\bibitem{saye2015hoquad}
{\sc R.~Saye}, {\em High-order quadrature method for implicitly defined
  surfaces and volumes in hyperrectangles}, SIAM Journal on Scientific
  Computing, 37 (2015), pp.~A993--A1019.

\bibitem{Stone}
{\sc H.~Stone}, {\em A simple derivation of the time-dependent
  convective-diffusion equation for surfactant transport along a deforming
  interface}, Phys. Fluids A, 2 (1990), pp.~111--112.

\bibitem{sudhakar2013quadrature}
{\sc Y.~Sudhakar and W.~A. Wall}, {\em Quadrature schemes for arbitrary
  convex/concave volumes and integration of weak form in enriched partition of
  unity methods}, Computer Methods in Applied Mechanics and Engineering, 258
  (2013), pp.~39--54.

\bibitem{XuZh}
{\sc J.-J. Xu and H.-K. Zhao}, {\em An {Eulerian} formulation for solving
  partial differential equations along a moving interface}, Journal of
  Scientific Computing, 19 (2003), pp.~573--594.

\end{thebibliography}
\bibliographystyle{siam}
\end{document}